%% file: main.tex
\documentclass{amsart}
\usepackage{geometry}
\usepackage{graphicx}
\usepackage{hyperref}
\usepackage{bm} %
\usepackage{amssymb,amsmath,amsthm}
\usepackage[fixamsmath,disallowspaces]{mathtools} %
\mathtoolsset{showonlyrefs=true}
\usepackage{nicefrac}       %
\usepackage{microtype}      %
\usepackage{algorithm, algpseudocode}
\usepackage{comment}
\usepackage{enumitem}
\usepackage{ulem}  %
\usepackage{tikz}
\usetikzlibrary{shapes}
\usepackage{pgfplots}
\usepackage{pgfplotstable}
\pgfplotsset{compat=newest}
\usepackage[dvipsnames]{xcolor}
\usepackage{accents}
\newcommand{\ubar}[1]{\underaccent{\bar}{#1}}

\theoremstyle{plain}
\newtheorem{theorem}{Theorem}
\newtheorem{lemma}[theorem]{Lemma}

\theoremstyle{definition}
\newtheorem{definition}[theorem]{Definition}
\newtheorem{remark}[theorem]{Remark}

\DeclareMathOperator*{\supp}{supp}

\makeatletter
\renewcommand{\paragraph}{\@startsection{paragraph}{4}{\z@}%
  {1ex \@plus1ex \@minus.2ex}%
  {-0.5em}%
  {\normalfont\normalsize\scshape}}
\makeatother

\title{Multilevel Sparse Tensor Approximation for High-Dimensional Parametric PDEs}
\author{Martin Eigel, Philipp Trunschke, Dana Wrischnig}
\date{\today}

\begin{document}

\begin{abstract}
    \input{content/abstract}

\end{abstract}

\keywords{multilevel methods, sparse tensor approximation, parametric PDEs, high-dimensional problems, finite element methods}
\subjclass[2020]{
    65Y20, %
    15A69, %
    35R60, %
    65N30, %
    41A30 %
}

\maketitle

\input{content/introduction}
\input{content/theory}

\input{content/numerics}

\section*{Acknowledgements}
This project is partially funded by the ANR-DFG project COFNET (ANR-21-CE46-0015). DW acknowledges funding by the Deutsche Forschungsgemeinschaft (DFG, German Research Foundation) through the project Robust Sampling for Bayesian Neural Networks Project ID 522337282 and CRC/TRR 388 Rough Analysis, Stochastic Dynamics and Related Fields Project ID 516748464.

Our code makes extensive use of the Python packages \texttt{numpy}~\cite{numpy}, \texttt{scipy}~\cite{scipy}, \texttt{matplotlib}~\cite{matplotlib} and \texttt{FEniCSx}~\cite{fenics_dolfinx,fenics_basix_1,fenics_basix_2,fenics_ufl}.

\bibliographystyle{siam}
\bibliography{references}

\appendix

\input{content/appendix/tensors}
\input{content/appendix/proof_summability_2}
\input{content/appendix/proof_rip_transfer}

\end{document}

%% file: content/abstract.tex
In this paper the efficiency of multilevel sparse tensor approximation methods for high-dimensional affine parametric diffusion equations is investigated. Methodologically, the recently presented Sparse Alternating Least Squares (SALS) algorithm is employed to construct adaptive tensor train (TT) approximations of quantities of interest (QoI).
By combining this tensor-based approach with a multilevel Galerkin discretization strategy, the solution's regularity can be exploited to significantly reduce computational costs by level-adapted sample sizes.
A rigorous theoretical analysis is derived, demonstrating that the work overhead for the proposed multilevel method remains independent of the discretization level, which stands in stark contrast to the exponential growth observed in single-level approaches.
The presented analysis is quite general and not constrained to the sparse TT format but uses a generic framework that can be extended to other model classes.
Numerical experiments validate the predicted efficiency gains in high-dimensional settings.

%% file: content/introduction.tex
\section{Introduction}

Efficient numerical treatment of partial differential equations (PDEs) with high-dimensional parameter spaces remains a formidable challenge in uncertainty quantification and computational engineering.
This work addresses this challenge by combining multilevel Galerkin discretizations with recently presented adaptive sparse tensor network approximations.
By exploiting both, the hierarchical structure of the spatial discretization and the low-rank nature of the parametric solution, we derive a method for computing quantities of interest that offers rigorous error control while mitigating the curse of dimensionality.

\subsection{Motivation}

High-dimensional parametric partial differential equations arise throughout uncertainty quantification (UQ), where uncertain inputs (e.g., diffusion or permeability coefficients) are modeled as random fields~\cite{GhanemSpanos1991,XiuKarniadakis2002}.
Using for instance the common truncated Karhunen-Loève expansion, fields with bounded variance can be modeled as deterministic functions of random parameters.
The Doob--Dynkin lemma ensures that the solution field can also be written as a function of these parameters.
A principal difficulty is that naive tensor-product discretizations in the parameter domain exhibit computational complexity that grows exponentially with the number of parameters $M$, the notorious curse of dimensionality (CoD).
Sparse polynomial and sparse-grid collocation methods~\cite{CohenDeVore2015,NobileTemponeWebster2008} mitigate this effect by exploiting anisotropic regularity and coefficient sparsity.
However, constructing high-accuracy surrogates can still require many expensive forward solves on fine spatial meshes.
A well-known strategy to alleviate this effort is to use a multilevel approach with level-dependent sample sizes, cf.~\cite{TeckentrupJantschWebsterGunzburger2015,Heinrich2001,Giles2008,cliffe2011multilevel,teckentrup2013further,harbrecht2012multilevel}.

In many applications, the computational objective is the efficient evaluation of quantities of interest (QoIs), rather than the full solution.
The compressed sensing (CS) analyses in~\cite{RauhutSchwab2017_CSPG} is concerned with this task in the context of affine-parametric operator equations. They show near-best $s$-term convergence under weighted $\ell^p$-summability of coefficient sequences.
The multilevel variant~\cite{Bouchot2017} further reduces the work by distributing samples across multiple spatial discretization levels.
Building on this idea,~\cite{TrunschkeEigelNouy2025} proposes to encode the weighted-sparse model classes in tensor networks with sparse component tensors, with the aim of speeding up the computation.
In the spirit of multilevel methods~\cite{Giles2008,Heinrich2001,TeckentrupJantschWebsterGunzburger2015}, this work combines the new sparse tensor train (TT) approximation proposed in~\cite{TrunschkeEigelNouy2025} with the multilevel Galerkin hierarchy proposed in~\cite{Bouchot2017}, thus allocating most samples to inexpensive coarse levels.

The key methodological point is to combine an \emph{adaptive sparse TT least-squares estimator} (SALS/SSALS) with a \emph{multilevel Galerkin hierarchy} in the physical variables, so that most samples are taken on inexpensive coarse levels while the parametric dependence is captured in a compressed tensor network model class.

\subsection{Related work and contributions}\label{sec:related-work-contrib}

High-dimensional parametric PDEs are commonly approximated via polynomial surrogates constructed by sparse grids and sparse interpolation/collocation techniques, exploiting anisotropy and mixed regularity to mitigate the exponential complexity of full tensor-product discretizations, see for instance~\cite{BungartzGriebel2004,CohenDeVore2015,NobileTemponeWebster2008}.
A complementary line of work builds sparse polynomial-chaos representations from random solution samples using compressed sensing (CS), typically based on (weighted) $\ell_1$-minimization, with recovery guarantees under weighted summability assumptions on coefficient sequences and suitable sampling measures~\cite{DoostanOwhadi2011_NonadaptedSparse,RauhutWard2016_WeightedL1,RauhutSchwab2017_CSPG,FoucartRauhut2013_CSBook}.
Multilevel variants of CS Petrov--Galerkin and stochastic collocation combine these approximation principles with spatial discretization hierarchies to reduce the overall cost by allocating most samples to inexpensive coarse levels~\cite{Giles2008,Heinrich2001,TeckentrupJantschWebsterGunzburger2015,Bouchot2017}.

Tensor network formats such as the popular tensor train (TT) representation provide an alternative approximation architecture for high-dimensional coefficient tensors, enabling storage and evaluation costs that scale polynomially in the dimension for structured problems \cite{Oseledets2011,Hackbusch2012,GrasedyckKressnerTobler2013}.
In the context of UQ and parametric PDEs, TT-based surrogates have been developed for polynomial expansions~\cite{Eigel2020a,Eigel2020b} and regression-type constructions~\cite{eigel2019variational}.
The present work builds specifically on sparse TT least-squares estimators (SALS/SSALS) grounded in weighted sparsity model classes, which adaptively identify and fit a sparse TT representation from pointwise evaluations, cf.\ \cite{TrunschkeEigelNouy2025} and references therein.

\vspace{1ex}

This work develops a comprehensive framework for multilevel approximation methods.
Our primary contributions are threefold:
\begin{enumerate}
    \item \textbf{General work--accuracy analysis.}
    We derive work--accuracy relations for both single-level and multilevel surrogate computations.
    In particular, we show that the multilevel method yields an overhead (relative to the finest-level solve cost) that is independent of the finest spatial level up to logarithmic factors.
    By contrast, the single-level computational overhead grows exponentially as the accuracy target tightens.
    While the resulting relations are not new, our generalised analysis cleanly exposes the underlying assumptions \ref{asn:level_error}--\ref{asn:work_bound}.
    While \ref{asn:level_error} and \ref{asn:work_bound} depend only on the spatial discretisation and the PDE solution's regularity, \ref{asn:approx_error_eps} and \ref{asn:emp_approx_error} concern the least squares approximation in the model class.
    This clear separation of responsibilities together with the generality of the assumptions simplify the transfer of the presented theory to other PDEs and model classes.

    \item \textbf{Application to sparse tensor trains and a parametric diffusion problem.}
    For a canonical affine-parametric diffusion problem, we relate assumptions \ref{asn:level_error}--\ref{asn:work_bound} to standard PDE approximation properties, parametric regularity, and conditions on the model class.
    Extending the theory presented in~\cite{TrunschkeEigelNouy2025} and~\cite{Bouchot2017}, we prove \ref{asn:level_error} and \ref{asn:work_bound} for uniformly refined FEM.
    We believe that these results can be easily transferred to other PDEs with analytic parameter to solution maps.
    We then propose to approximate the QoI in a model class of tensor train (TT) tensors with sparse component tensors and prove assumptions \ref{asn:approx_error_eps} and \ref{asn:emp_approx_error}.
    We believe that these proves can be transferred to other model classes, for example using orthogonal polynomial bases for which conversion rules to monomials are known (like Laguerre polynomials).

    \item \textbf{Empirical evaluation.}
    We present numerical experiments that quantify the practical work--accuracy behavior of the proposed multilevel method and compare SALS and SSALS~\cite{TrunschkeEigelNouy2025} under matched accuracy targets. The experiments corroborate the predicted trends and illustrate the computational gains that can be achieved by the proposed multilevel method.
\end{enumerate}

\section{Setting}

We consider a parametric partial differential equation with homogeneous boundary conditions on a bounded Lipschitz domain $D \subset \mathbb{R}^d$.
The problem representation, notation and basic solvability results are based on formulations in~\cite{Bouchot2017, Cohen2010, RauhutSchwab2017_CSPG, Todor2006}.

For any parameter vector $\bm{y}\in U := [-1, 1]^{M}$, $M\in\mathbb{N}$,~$M>0$, the differential equation reads
\begin{equation}
\label{eq:darcy}
\begin{aligned}
    - \nabla_x \cdot (a(x, \bm{y}) \nabla_x u(x, \bm{y})) &= f(x)
    \quad\text{for}\ x \in D\,, \\
    u(x, \bm{y}) &= 0
    \quad\text{for}\ x\in \partial D\,,
\end{aligned}
\end{equation}
with a parametric \emph{diffusion coefficient}
\begin{equation}
\label{eq:diffusionCoefficientReal}
    a : D\times U \to \mathbb{R},
    \quad
    a(x, \bm{y}) = a_0(x) + \sum_{m=1}^M \bm{y}_m a_m(x) \,.
\end{equation}
Such an expansion is given, for example, by the Karhunen--Loève decomposition of random fields~\cite[Theorem 2.5]{Schwab2006}.
Let $V := H^1_0(D)$ and assume that $f \in V^*$ and the coefficient $a$ satisfies the \emph{uniform ellipticity assumption}
\begin{equation}
\label{eq:UEA}
    r \le a(x, \bm y) \le R
    \quad\text{for all}\quad
    (x,\bm y)\in D\times U
    \,,
\tag{UEA}
\end{equation}
\noeqref{eq:UEA}%
then the Lax--Milgram theorem ensures well-posedness of the variational formulation
\begin{equation}
\label{eq:variational_form}
    \mathcal{A}(u(\bm{y}), v; \bm{y})
    = \langle f, v\rangle_{V^*, V}
    \quad\text{for all}\quad
    v\in V,
\end{equation}
with the parametric bilinear form $\mathcal{A}(u, v; \bm{y}) := \int_D a(\bm{y}) \nabla u(x) \nabla v(x) \,\mathrm{d}x$.

Often, one is not interested in the full solution $u(\bm{y}) := u(\bullet, \bm{y})\in H^1_0(D)$ but rather in a \emph{quantity of interest} (QoI)
$$
    g: U \to \mathbb{R}
    \qquad\text{with}\qquad
    g(\bm{y}) := G(u(\bm{y})),
$$
where $G\in V^*$ is a bounded linear functional.

To approximate $g(\bm{y})$ numerically, let $(V_l)_{l\in\mathbb{N}}$ be a sequence of nested, finite-dimensional subspaces of $V$ that are dense in $V$, i.e.,
\begin{equation}
    \label{eq:galerkinSpaces}
    V_0 \subset V_1 \subset \dots
    \qquad \text{and} \qquad
    \operatorname{cl}_V\big({\textstyle\bigcup_{l\in\mathbb{N}} V_l}\big) = V~.
\end{equation}
Restricting~\eqref{eq:variational_form} to the discretisation space $V_l$ yields for every $\bm{y}\in U$ a solution $u_l(\bm{y})\in V_l$ satisfying the finite-dimensional linear system
\begin{equation}
\label{eq:variationalFE}
    \mathcal{A}(u_l(\bm{y}), v_l; \bm{y})
    = (f, v_l)_{L^2(\rho)}
    \quad\text{for all}\quad
    v_l\in V_l\,.
\end{equation}
The corresponding QoI is given by
\begin{equation}
\label{eq:gl}
    g_l(\bm{y}) := G(u_l(\bm{y})) \,.
\end{equation}

Naïvely evaluating $g_l(\bm{y})$ requires the solution $u_l(\bm{y})$ of equation~\eqref{eq:variationalFE}.
This is impractical when solving~\eqref{eq:variationalFE} is expensive and many evaluations are required.
In such situations, it can be helpful to compute a surrogate $\tilde{g}_l$ of $g_l$ that can be used instead.
However, since creating $\tilde{g}_l$ typically requires multiple evaluations $g_l(\bm{y}_1), \ldots, g_l(\bm{y}_n)$ it is important that creating $\tilde{g}_l$ does not require more work than evaluating $g_l$ directly.
Fortunately, for many PDEs, we can take advantage of the solution's regularity using the multi-level method discussed in the next section.

\section{Work estimates for general multilevel methods}
\label{sec:sl_vs_ml}

Since evaluating $g_l$ typically becomes more costly as $l$ increases, it is natural to wonder if evaluations of lower-level $g_l$'s can help in computing a higher-level approximation $g_L$.
This is the principal idea behind multi-level methods, and the present section recalls how and when this structure can be utilised.

To quantify this statement, we suppose that there exists a quasi-normed space $\mathcal{S}\subseteq L^\infty(\mu)$ such that
$$
    g\in\mathcal{S}
    \qquad\text{and}\qquad g_l\in\mathcal{S}
$$
for all $l\in\mathbb{N}$ and that the approximations $g_l$ satisfy the following assumption.
\begin{enumerate}[left=1.5em, label={\textbf{(A\arabic{*})}}, ref={(A\arabic{*})}]
    \item\label{asn:level_error}
    There exist constants $A,\alpha> 0$ such that for all $l\in\mathbb{N}$,
    $$
        \|g - g_{l}\|_{\mathcal{S}} \le A 2^{-\alpha l}~.
    $$
\end{enumerate}

To approximate the QoIs $g_l$, we consider a family of model classes $\{\mathcal{M}_\gamma\}_{\gamma > 0}$ that can approximate any function $z\in\mathcal{S}$ with desired accuracy.
\begin{enumerate}[resume*]
    \item\label{asn:approx_error_eps}
    There exists a constant $C>0$ such that for all $\gamma > 0$ and $z\in\mathcal{S}$
    $$
        \inf_{v \in \mathcal{M}_\gamma} \|z - v\|_{L^\infty(\mu)} \le C \gamma \|z\|_{\mathcal{S}}~.
    $$
\end{enumerate}

To compute surrogates for the approximations $g_l$, we define the \emph{(weighted) least squares approximation}
\begin{equation} \label{eq:min_emp}
    P_{\mathcal{M}_\gamma}^n g_l \in \arg\min_{v\in\mathcal{M}_\gamma}\ \|g_l - v\|_{n}
    \quad\text{with}\quad
    \|v\|_{n} := \left(
        \frac{1}{n}\sum_{i=1}^n w(\bm y_i) |v(\bm y_i)|^2
    \right)^{1/2} \,.
\end{equation}
Here $w\ge 0$ may be any \emph{weight function} satisfying $\int w^{-1} \,\mathrm{d}\mu = 1$ and the
sample points $\bm y_i\sim w^{-1}\mu$ are i.i.d.\ for $i=1,\ldots,n$ and $n\in\mathbb{N}$.
We further assume that the sample size $n$ can be chosen such that the approximation $P_{\mathcal{M}_\gamma}^n g_l$ is almost optimal with any desired probability.
\begin{enumerate}[resume*]
    \item \label{asn:emp_approx_error}
    There exist $B,\beta > 0$ such that for all $p\in(0,1)$ and $\gamma>0$ there exist $n_0, n_{p,\gamma} \in\mathbb{N}$ with
    $$
        n_{p,\gamma} \le n_{0} \gamma^{-\beta}\log(p^{-1})
        \qquad\text{and}\qquad
        \|g - P_{\mathcal{M}_\gamma}^{n_{p,\gamma}}g\|_{L^2(\mu)}
        \le B \inf_{v\in\mathcal{M}_\gamma} \|g - v\|_{L^\infty(\mu)}
    $$
    with probability $1-p$ for every $g \in L^\infty(\mu)\subseteq L^2(\mu)$.
\end{enumerate}

To meaningfully discuss the work required for computing the surrogate $\tilde{g}_l$, we make one final assumption.
\begin{enumerate}[resume*]
    \item \label{asn:work_bound}
    There exist $\delta>\alpha\beta$ such that the computation time $\tau_l$ of $g_{l}(\bm{y})$ is independent of $\bm y$ and satisfies
    $$
        \tau_l \asymp 2^{\delta l}
    $$
    for all $\bm{y}\in U$.
\end{enumerate}
Although the computation time bound in assumption~\ref{asn:work_bound} can be relaxed, a bound of this form is essential to compare the overall cost of the single-level and multi-level approximations.
The condition $\delta > \alpha\beta$ ensures that the required work increases sufficiently with each level for the multi-level method to make sense.
If, for example, $\delta = 0$, the evaluation time would be the same for every level, and a multi-level estimate would only have the potential to increase computation time due to potential overhead costs.

\subsection{Single-level approximation}
Arguably, the easiest way to approximate $g$ is by using the single-level approximation
$$
    \tilde{g}_{L}^{\mathrm{SL}}
    := P_{\mathcal{M}_{\gamma_L}}^{n_L} g_L
$$
using the model class $\mathcal{M}_{\gamma_L}$ with $\gamma_L := 2^{-\alpha L}$ and $n_L := n_{p, \gamma_L} = n_{0} 2^{\alpha\beta L} \log(p^{-1})$ evaluations of $g_L$.
The parameter $p\in(0,1)$ can be chosen freely and controls the probability of success $1-p$ of the (random) least squares approximation method $P^{n_L}_{\mathcal{M}_{\gamma_L}}$.

To investigate the efficiency of this approach, we let $\tau^{\mathrm{SL}}_L$ denote the work required to compute $\tilde{g}_L^{\mathrm{SL}}$.
Since we, arguably, have to evaluate $g_L$ at least once, it makes sense to consider the work overhead
\begin{equation}
\label{eq:overhead}
    \omega^{\mathrm{SL}}_L := \frac{\tau^{\mathrm{SL}}_L}{\tau_L}
\end{equation}
as a measure of the efficiency of the approximation algorithm.
The single-level approximation satisfies the following error and work overhead bounds.

\begin{theorem}
\label{thm:sl_error_and_work}
    The single-level approximation satisfies the error bound
    $$
        \|g - \tilde{g}_{L}^{\mathrm{SL}}\|_{L^2(\mu)} 
        \lesssim 2^{-\alpha L}
    $$
    with probability $1-p$ and a work overhead bounded by $\omega^{\mathrm{SL}}_L \lesssim 2^{\alpha\beta L} \log(p^{-1})$.
\end{theorem}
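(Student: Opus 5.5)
Both bounds follow by chaining assumptions \ref{asn:level_error}--\ref{asn:work_bound}. The error bound comes from a triangle inequality in $L^2(\mu)$ that separates the discretisation error from the least-squares error. The work bound comes from counting the $n_L$ evaluations of $g_L$.

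\emph{Error bound.} I split
$$
    \|g - \tilde g_L^{\mathrm{SL}}\|_{L^2(\mu)}
    \le \|g - g_L\|_{L^2(\mu)} + \|g_L - P^{n_L}_{\mathcal{M}_{\gamma_L}} g_L\|_{L^2(\mu)} .
$$

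For the first term, I use $\|\cdot\|_{L^2(\mu)} \le \|\cdot\|_{L^\infty(\mu)}$, which holds for the probability measure $\mu$. I then need the embedding $\mathcal{S}\hookrightarrow L^\infty(\mu)$, i.e.\ $\|z\|_{L^\infty(\mu)} \lesssim \|z\|_{\mathcal{S}}$. This is implicit in $\mathcal{S}\subseteq L^\infty(\mu)$. It also follows from \ref{asn:approx_error_eps}: provided $0\in\mathcal{M}_\gamma$ for some $\gamma$ (e.g.\ for a cone-like model class), we get $\|z\|_{L^\infty}\le C\gamma\|z\|_{\mathcal{S}}$. With the embedding, \ref{asn:level_error} gives the bound $\lesssim A2^{-\alpha L}$. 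This embedding is the one point I would need to state explicitly, since it is not formally listed among the assumptions.

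For the second term, I apply \ref{asn:emp_approx_error} to the function $g_L\in L^\infty(\mu)$ with $\gamma=\gamma_L$ and probability parameter $p$. With probability $1-p$ this yields
$$
    \|g_L - P^{n_L}_{\mathcal{M}_{\gamma_L}} g_L\|_{L^2(\mu)} \le B\inf_{v\in\mathcal{M}_{\gamma_L}}\|g_L - v\|_{L^\infty(\mu)} \le BC\gamma_L\|g_L\|_{\mathcal{S}},
$$
where the second step is \ref{asn:approx_error_eps}.

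It remains to bound $\|g_L\|_{\mathcal{S}}$ uniformly in $L$. By the quasi-triangle inequality in $\mathcal{S}$ and \ref{asn:level_error}, $\|g_L\|_{\mathcal{S}}\le K(\|g\|_{\mathcal{S}}+A)$, where $K$ is the quasi-norm constant. Since $\gamma_L=2^{-\alpha L}$, the second term is therefore $\lesssim 2^{-\alpha L}$ as well. Summing the two terms gives the claimed error bound. I read $n_L=n_{p,\gamma_L}$ as the sample size supplied by \ref{asn:emp_approx_error}, which satisfies $n_L\le n_0 2^{\alpha\beta L}\log(p^{-1})$.

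\emph{Work bound.} Computing $\tilde g_L^{\mathrm{SL}}$ requires $n_L$ evaluations of $g_L$. By \ref{asn:work_bound}, each evaluation costs $\tau_L$, independently of the sample point. Hence $\tau_L^{\mathrm{SL}}\approx n_L\tau_L$, and $\omega_L^{\mathrm{SL}}\lesssim n_L\le n_0 2^{\alpha\beta L}\log(p^{-1})$.

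The one subtlety is the cost of the least-squares fit itself, which the work notion implicitly counts only through the sample evaluations. I would state this convention explicitly: the cost of solving \eqref{eq:min_emp} is either not counted or is dominated by $n_L\tau_L$. Under that convention the bound follows directly.
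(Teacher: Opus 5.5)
Your argument is the paper's own: the same triangle inequality, then \ref{asn:emp_approx_error}, \ref{asn:approx_error_eps} and \ref{asn:level_error} on the least-squares term, and $\tau_L^{\mathrm{SL}}=n_L\tau_L$ for the work. The paper simply writes $\|g-g_L\|_{L^2(\mu)}\le A2^{-\alpha L}$ and $\|g_L\|_{\mathcal S}\le\|g\|_{\mathcal S}+A2^{-\alpha L}$. In doing so it silently uses $\|\cdot\|_{L^2(\mu)}\le\|\cdot\|_{\mathcal S}$ and a constant-one triangle inequality in $\mathcal S$, even though $\mathcal S$ is only quasi-normed when $p<1$. So making the embedding, the quasi-norm constant $K$ and the work convention explicit are legitimate refinements.

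One of your justifications for the embedding is wrong, however. From $0\in\mathcal M_\gamma$ you only get $\inf_{v\in\mathcal M_\gamma}\|z-v\|_{L^\infty(\mu)}\le\|z\|_{L^\infty(\mu)}$. That is an \emph{upper} bound on the infimum, so \ref{asn:approx_error_eps} then says nothing about $\|z\|_{L^\infty(\mu)}$. For instance, with $\mathcal M_\gamma=L^\infty(\mu)$, assumption \ref{asn:approx_error_eps} holds trivially and no embedding follows.

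What you actually need is $\mathcal M_\gamma=\{0\}$ for some $\gamma$. This does hold for the sparse classes of Section~\ref{sec:application} as soon as $r(\gamma)<\inf_{\bm\nu}\bm\omega_{\bm\nu}^2$. That infimum is positive because $\ubar{\bm\omega}\ge 1$ and $\ubar{\bm\omega}^2/\bm\omega\in\ell^1$.

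Likewise, the bare set inclusion $\mathcal S\subseteq L^\infty(\mu)$ gives no constant by itself. In the abstract framework, the bound $\|z\|_{L^\infty(\mu)}\lesssim\|z\|_{\mathcal S}$ should therefore simply be stated as an additional assumption, as you suggest.
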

\begin{proof}
    The proof proceeds similarly to the one presented in~\cite{TeckentrupJantschWebsterGunzburger2015}.
    Recall that $\tilde g^{\mathrm{SL}}_L$ is defined with $\gamma_L = 2^{-\alpha L}$ and $n_{L} = n_{0} 2^{\alpha\beta L} \log(p^{-1})$.
    Using the triangle inequality, assumption~\ref{asn:emp_approx_error}, assumption~\ref{asn:approx_error_eps} and assumption~\ref{asn:level_error}, the approximation error can thus be bounded with probability $1-p$ by
    \begin{align}
        \|g - \tilde{g}_{L}^{\mathrm{SL}}\|_{L^2(\mu)} 
        &\le \|g - g_{L}\|_{L^2(\mu)} + \|g_L - \tilde{g}_{L}^{\mathrm{SL}}\|_{L^2(\mu)} \\
        &\le \|g - g_{L}\|_{L^2(\mu)} + B \inf_{v\in\mathcal{M}_{\gamma_L}} \|g_{L} - v\|_{L^\infty(\mu)} \\
        &\le A\,2^{-\alpha L} + BC\gamma_L\|g_{L}\|_{\mathcal{S}} \\
        &\le A\,2^{-\alpha L} + BC\gamma_L\big(\|g\|_{\mathcal{S}} + A\,2^{-\alpha L}\big) \\
        &= (A + BC\|g\|_{\mathcal{S}} + ABC\,2^{-\alpha L}) \,2^{-\alpha L} .
    \end{align}

    The total work required to compute $\tilde g^{\mathrm{SL}}_L$ is bounded by $\tau^{\mathrm{SL}}_L = n_L \tau_L \le n_0 2^{\alpha\beta L} \log(p^{-1}) \tau_L$, resulting in the stated work overhead.
\end{proof}

\subsection{Multi-level approximation}

To utilise the regularity of the solution, we define the level-wise updates
$$
    \Delta g_l := g_l - g_{l-1}
    \qquad\text{with}\qquad
    g_0 := 0~.
$$
Given these, we then define the multi-level approximation as
$$
    \tilde{g}_L^{\mathrm{ML}} := \sum_{l=1}^L \Delta\tilde{g}_l^{\mathrm{ML}}
    \qquad\text{with}\qquad
    \Delta\tilde{g}_l^{\mathrm{ML}} := P_{\mathcal{M}_{\gamma_l}}^{n_l} \Delta g_l\,,
$$
where $\gamma_l := 2^{-\alpha(L-l)}$ and
\begin{equation}
\label{eq:level_wise_sample_size}
    n_l := n_{p_l,\gamma_l} = n_{0} 2^{\alpha\beta(L-l)} \log(p_l^{-1})
    \qquad\text{with}\qquad
    p_l := \frac{p}2 2^{-(L - l)} \,.
\end{equation}

Similar to the single-level case, we define the multi-level work $\tau^{\mathrm{ML}}_L$ and work overhead
\begin{equation}
\label{eq:ml_overhead}
    \omega^{\mathrm{ML}}_L := \frac{\tau^{\mathrm{ML}}_L}{\tau_L} .
\end{equation}
The multi-level approximation satisfies the following error and work overhead bounds.

\begin{theorem}
\label{thm:ml_error_and_work}
    The multi-level approximation satisfies the error bound
    $$
        \|g - \tilde{g}_{L}^{\mathrm{ML}}\|_{L^2(\mu)} 
        \lesssim 2^{-\alpha L} L
    $$
    with probability $1-p$ and a work overhead bounded by $\omega^{\mathrm{ML}}_L \lesssim \log(p^{-1})$.
\end{theorem}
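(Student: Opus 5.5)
We follow the proof of Theorem~\ref{thm:sl_error_and_work}, now applied level by level. Since $g_0=0$ and the sum telescopes, $g_L=\sum_{l=1}^L\Delta g_l$. The triangle inequality therefore gives
\begin{equation}
    \|g-\tilde g_L^{\mathrm{ML}}\|_{L^2(\mu)}
    \le \|g-g_L\|_{L^2(\mu)} + \sum_{l=1}^L \|\Delta g_l - P^{n_l}_{\mathcal{M}_{\gamma_l}}\Delta g_l\|_{L^2(\mu)} .
\end{equation}
We bound the first term by $A2^{-\alpha L}$ using assumption~\ref{asn:level_error}, since $\|\cdot\|_{L^2(\mu)}\lesssim\|\cdot\|_{L^\infty(\mu)}\lesssim\|\cdot\|_{\mathcal{S}}$ (taking the embedding $\mathcal{S}\subseteq L^\infty(\mu)$ to be continuous, as is implicitly done in the single-level proof). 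For each summand we apply assumption~\ref{asn:emp_approx_error} with $p_l$ and $\gamma_l$ to the function $\Delta g_l\in L^\infty(\mu)$. This gives, with probability $1-p_l$,
\begin{equation}
    \|\Delta g_l - P^{n_l}_{\mathcal{M}_{\gamma_l}}\Delta g_l\|_{L^2(\mu)} \le B\inf_{v\in\mathcal{M}_{\gamma_l}}\|\Delta g_l-v\|_{L^\infty(\mu)} \le BC\gamma_l\|\Delta g_l\|_{\mathcal{S}},
\end{equation}
where the last step is assumption~\ref{asn:approx_error_eps}.

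The key estimate is $\|\Delta g_l\|_{\mathcal{S}}\lesssim 2^{-\alpha l}$. For $l\ge 2$ we use the (quasi-)triangle inequality and assumption~\ref{asn:level_error}:
\begin{equation}
    \|\Delta g_l\|_{\mathcal{S}}\le K\bigl(\|g-g_l\|_{\mathcal{S}}+\|g-g_{l-1}\|_{\mathcal{S}}\bigr)\le KA(1+2^{\alpha})2^{-\alpha l},
\end{equation}
where $K$ is the quasi-norm constant. For $l=1$ we have $\|\Delta g_1\|_{\mathcal{S}}=\|g_1\|_{\mathcal{S}}\le K(\|g\|_{\mathcal{S}}+A2^{-\alpha})$, which is also $\lesssim 2^{-\alpha}$ because $l=1$ is a fixed level. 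Multiplying by $\gamma_l=2^{-\alpha(L-l)}$, each summand is $\lesssim 2^{-\alpha L}$. Summing over the $L$ levels gives the bound $\lesssim L\,2^{-\alpha L}$. This is exactly where the factor $L$ comes from: the per-level tolerances are balanced so that every level contributes equally.

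For the probability we use a union bound. All $L$ events hold simultaneously with probability at least
\begin{equation}
    1-\sum_{l=1}^L p_l = 1-\tfrac p2\sum_{k=0}^{L-1}2^{-k}\ge 1-p .
\end{equation}

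For the work, level $l$ requires $n_l$ evaluations of $\Delta g_l$, that is, of $g_l$ and $g_{l-1}$. Each such evaluation costs $\lesssim\tau_l$ by assumption~\ref{asn:work_bound}, and we neglect the cost of the least squares solve, as in the single-level theorem. Hence
\begin{equation}
    \tau_L^{\mathrm{ML}}\lesssim\sum_{l=1}^L n_0 2^{\alpha\beta(L-l)}\log(p_l^{-1})\,2^{\delta l}.
\end{equation}
Next we write $\log(p_l^{-1})=\log(2p^{-1})+(L-l)\log 2\lesssim \log(p^{-1})\bigl(1+(L-l)\bigr)$. Here we assume $p$ is bounded away from $1$, say $p\le 1/2$, so that $\log(p^{-1})\gtrsim 1$. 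Substituting $k=L-l$ and dividing by $\tau_L\asymp 2^{\delta L}$ gives
\begin{equation}
    \omega_L^{\mathrm{ML}}\lesssim\log(p^{-1})\sum_{k\ge0}(1+k)\,2^{-(\delta-\alpha\beta)k}.
\end{equation}
This series converges precisely because $\delta>\alpha\beta$, so $\omega_L^{\mathrm{ML}}\lesssim\log(p^{-1})$ uniformly in $L$.

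The main obstacle is this work sum. The logarithmic growth of $\log(p_l^{-1})$ in $L-l$ has to be absorbed by the geometric decay $2^{-(\delta-\alpha\beta)k}$, and the condition on $p$ must be handled so that the constant stays independent of $p$. The remaining steps are routine.
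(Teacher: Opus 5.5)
Your proof is correct and follows the paper's argument essentially step for step. It uses telescoping plus the triangle inequality, assumptions \ref{asn:emp_approx_error}, \ref{asn:approx_error_eps} and \ref{asn:level_error} level by level with $\gamma_l = 2^{-\alpha(L-l)}$, a union bound over the $p_l$, and the convergent series $\sum_k (k+1)2^{(\alpha\beta-\delta)k}$ for the work. You also make explicit details the paper glosses over: the quasi-norm constant, the level $l=1$ (where $g_0 := 0$ means assumption \ref{asn:level_error} does not literally bound $\|g-g_0\|_{\mathcal{S}}$), the cost of also evaluating $g_{l-1}$, and the restriction $p\le 1/2$ needed to absorb $\log 2$ into $\log(p^{-1})$.
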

\begin{proof}
    Using the triangle inequality and a telescoping sum, we can bound the multi-level approximation error by
    \begin{align}
        \|g - \tilde{g}_L^{\mathrm{ML}}\|_{L^2(\mu)}
        &\le \|g - g_L\|_{L^2(\mu)} + \|g_L -\tilde{g}_L^{\mathrm{ML}} \|_{L^2(\mu)}\\
        &\le A2^{-\alpha L} + \sum_{l=1}^L \|\Delta g_l -\Delta\tilde{g}_l^{ML} \|_{L^2(\mu)}~.
    \end{align}
    Now, recall that $\gamma_l = 2^{-\alpha(L-l)}$ and $n_{l} = n_{0} 2^{\alpha\beta(L-l)} \log(p_l^{-1})$ for $p_l := \frac{p}2 2^{-(L - l)}$.
    Using assumptions~\ref{asn:emp_approx_error} and~\ref{asn:approx_error_eps} each summand in the second term satisfies
    $$
        \|\Delta g_l - \Delta \tilde{g}_l^{\mathrm{ML}}\|_{L^2(\mu)}
        \le B \inf_{v\in\mathcal{M}_{\gamma_l}}\|\Delta g_l - v\|_{L^\infty(\mu)}
        \le BC\gamma_l \|\Delta g_l\|_{\mathcal{S}}
    $$
    with probability $1-p_l$.
    Using another triangle inequality and assumption~\ref{asn:level_error}, we further bound
    \begin{align}
        \|\Delta g_l - \Delta \tilde{g}_l^{\mathrm{ML}}\|_{L^2(\mu)}
        \le BC\gamma_l \|\Delta g_l\|_{\mathcal{S}}
        \le BC\gamma_l \big(\|g - g_l\|_{\mathcal{S}} + \|g - g_{l-1}\|_{\mathcal{S}}\big)
        \le ABC(1 + 2^\alpha)2^{-\alpha l}\gamma_l .
    \end{align}
    Using $\gamma_l = 2^{-\alpha (L-l)}$, this yields $\|\Delta g_l - \Delta \tilde{g}_l^{\mathrm{ML}}\|_{L^2(\mu)} \le ABC(1 + 2^\alpha)2^{-\alpha L}$.
    Reinserting this bound into the initial error bound for $\tilde g^{\mathrm{ML}}_L$ reveals that
    \begin{align}
        \|g - \tilde{g}_L^{\mathrm{ML}}\|_{L^2(\mu)}
        &\le A\,2^{-\alpha L} + ABC(1 + 2^\alpha) L\,2^{-\alpha L} \\
        &\le \Big(A + ABC(1 + 2^\alpha)\Big)L\, 2^{-\alpha L}
    \end{align}
    holds with probability $1-\tfrac{p}2\sum_{k=0}^{L-1} 2^{-k} \ge 1 - p$.
    The work required to compute $\tilde g^{\mathrm{ML}}_L$ is thus bounded by
    \begin{align}
        \tau^{\mathrm{ML}}_L
        &= \sum_{l=1}^L n_l\tau_l \\
        &= \sum_{l=1}^L n_0 2^{\alpha\beta(L-l)} \big(\log(2) + \log(p^{-1}) + (L-l)\log(2) \big) \tau_l \\
        &\lesssim 2^{\delta L} \log(p^{-1}) \sum_{l=1}^L (L-l+1) 2^{(\alpha\beta - \delta)(L-l)} \\
        &\le 2^{\delta L} \log(p^{-1}) \sum_{k=0}^\infty (k+1) 2^{(\alpha\beta - \delta)k} \\
        &\lesssim 2^{\delta L} \log(p^{-1}) \\
        &\lesssim \tau_L \log(p^{-1})~,
    \end{align}
    where convergence of the series in the second-to-last step follows from the ratio test, using $\alpha\beta-\delta < 0$.
\end{proof}

In the single-level setting, Theorem~\ref{thm:sl_error_and_work} shows that the upper bound for the work overhead $\omega^{\mathrm{SL}}_L\lesssim 2^{\alpha\beta L} \log(p^{-1})$ grows exponentially with the accuracy level $L$.
In contrast, in the multi-level setting, Theorem~\ref{thm:ml_error_and_work} shows that the work overhead $\omega^{\mathrm{ML}}_L \lesssim \log(p^{-1})$ is independent of the accuracy level $L$.
This is a clear advantage over the single-level method.

\begin{remark}
    We can remove the factor $L$ in the error bound of Theorem~\ref{thm:ml_error_and_work} by choosing $\gamma_l \lesssim 2^{-\alpha'(L-l)}$ for any $\alpha' > \alpha$.
    Since $\alpha < \tfrac\delta\beta$ by assumption~\ref{asn:work_bound}, we can choose any $\alpha' \in (\alpha, \tfrac\delta\beta)$ for this purpose.
    Note, however, that the pre-asymptotic constant depends significantly on the gap $\tfrac\delta\beta - \alpha$.
\end{remark}

%% file: content/theory.tex
\section{A multilevel method using sparse Tensor Train approximation}
\label{sec:application}

In this section, we detail our assumptions on the diffusion coefficient~\eqref{eq:diffusionCoefficientReal} and the approximation spaces $\{V_l\}_{l\in\mathbb{N}}$.
We define two families of model classes $\{\mathcal{M}^{1}_\gamma\}_{\gamma > 0}$ and $\{\mathcal{M}^2_\gamma\}_{\gamma > 0}$ and a joint regularity space $\mathcal{Q}$.
Then we show that assumptions~\ref{asn:level_error}--\ref{asn:work_bound} are satisfied with $\alpha < 1$, $\beta=p/(1-p)-\varepsilon$ (for any $\varepsilon>0$) and $\delta = d$ (the dimension of the physical domain $D$).
Finally, we discuss under which assumptions the constants that occur are finite independent of the expansion size $M$.

\subsection{General definitions}

Suppose that there exists a sequence $\bm{\rho}\in\mathbb{R}^M$ with $\bm{\rho} > 1$ such that the \emph{$\bm\rho$-weighted uniform ellipticity assumption}
\begin{equation}
\label{eq:rho-UEA}
\begin{aligned}
    r_{\bm{\rho}} &:= \inf_{x\in D} a_0(x) - \sum_{j\ge 1} \bm{\rho}_j \vert a_j(x) \vert  > 0 \\
    R_{\bm{\rho}} &:= \|a\|_{L^\infty(D\times B_{\mathbb{C}}(0, \bm\rho))} < \infty
\end{aligned}
\tag{$\bm{\rho}$\,-UEA}
\end{equation}
\noeqref{eq:rho-UEA}%
be satisfied.
This is a stronger version of the uniform ellipticity assumption and, indeed, equivalent to~\eqref{eq:UEA} with $c = r$ and $C=R$ for the constant sequence $\bm\rho = 1$.
We use~\eqref{eq:rho-UEA} to quantify the regularity of $\bm y \mapsto u(\bm y)$ and bound the convergence rate of polynomial approximations of this map.

Furthermore, we assume that the diffusion coefficient $a : D\times U \to \mathbb{R}$ is uniformly Lipschitz continuous in its first argument, i.e.,
\begin{equation}
\label{eq:lipschitz}
    \vert a(x_1, \bm{y}) - a(x_2, \bm{y}) \vert
    \le  \operatorname{Lip}(a)\,|x_1 - x_2|
\end{equation}
for some $\bm y$-independent constant $\operatorname{Lip}(a)>0$.

For every $\bm{y}\in U$ we can compute $u_l(\bm y)$ using the \emph{finite element method} (FEM).
For this purpose, we assume that $D\subset \mathbb{R}^d$ is a polyhydron and define a sequence of shape-regular, affine meshes $(\mathcal{T}_{l})_{l\in\mathbb{N}}$ covering $D$.
Following~\cite{Bouchot2017}, we denote by $h_l > 0$ the mesh width of $\mathcal{T}_{l}$ and assume that
\begin{equation}
\label{eq:meshWith}
    h_l = 2^{-l}h_0
\end{equation}
for some $h_0 \in (0, \mathrm{diam}(D))$.
We then choose $V_l \subset V$, as the $H^1$-conforming FE subspace of degree $\operatorname{deg}(V_l) \ge 1$ on the mesh $\mathcal{T}_{l}$, as defined in~\cite[Section~18.2]{Ern2021FEI}.

We represent the parameter-to-solution map $\bm y \mapsto u(\bm y)$ by a polynomial chaos expansion~\cite{Ernst2011}.
For this, let $\mu_1$ be the uniform probability measure on $[-1,1]$ and $\{L_k\}_{k\in\mathbb N}$ the $L^2(\mu_1)$-normalized univariate Legendre polynomials.
Since the normalized Legendre polynomials form an orthonormal basis of the Lebesgue space $L^2(\mu_1)$, their tensor products
$$
    L_{\bm{m}}(\bm{y}) := {\textstyle\prod_{k\in[M]}} L_{\bm{m}_k}(\bm{y}_k) \;.
$$
form an orthonormal basis of the product space $L^2(\mu) = L^2(\mu_1)^{\otimes M}$ with $\mu := \mu_1^{\otimes M}$.
Hence, every $v \in L^2(\mu; V) \simeq L^2(\mu)\otimes V$ has a unique representation of the form
\begin{equation}
\label{eq:legendre_expansion}
    v(\bm{y}) = \sum_{\bm{m} \in \mathbb{N}^M} \hat{\bm v}_{\bm{m}} L_{\bm{m}}(\bm{y})
    \quad \text{for all} \ \bm{y}\in U
\end{equation}
with coefficients
\begin{equation}
    \hat{\bm{v}}_{\bm{m}} = \int v(\bm{y})L_{\bm{m}}(\bm{y}) \, \mathrm{d}\mu(\bm{y}) \in V,
    \quad \bm{m}\in \mathbb{N}^M.
\end{equation}
Moving forward, we utilise the isometry $L^2(\mu)\otimes V \simeq \ell^2(\mathbb{N}^M)\otimes V$ to identify every element $v \in L^2(\mu)\otimes V$ with its uniquely determined coefficient tensor $\hat{\bm{v}} = (\hat{\bm{v}}_{\bm{m}})_{\bm{m}\in\mathbb{N}^M} \in \ell^{2}(\mathbb{N}^M) \otimes V$.

To simplify the presentation, we make extensive use of the multi-index notation and adopt the following conventions.
For all sequences $\bm v, \bm w\in\mathbb{R}^{I}$, indexed over a set $I$, we introduce the notation $\|\bm v\|_\bullet := \prod_{i\in I} |\bm v_i|$ and $\bm v^{\bm w} := \prod_{i\in I} \bm v_i^{\bm w_i}$.
Moreover, we often write operations and relations, like $\bm v \bm w$ or $\bm v \ge 1$, and understand them element-wise.

Using the Legendre series expansions~\eqref{eq:legendre_expansion}, we construct concrete model classes with structured coefficient sequences.
We do this in a similar fashion to~\cite{RauhutSchwab2017_CSPG,Bouchot2015}, where the theory is based on sparse Chebyshev polynomials.
Define the \emph{weight sequences} $\ubar{\bm{\omega}}, \bar{\bm\omega}\in [0, \infty]^{\mathbb{N}^M}$ via
\begin{align}
\label{eq:defOmega}
    \ubar{\bm{\omega}}_{\bm{\nu}} 
    &\coloneqq \|L_{\bm{\nu}}\|_{L^\infty(\mu)}
    = \|2\bm\nu + 1\|_{\bullet}^{1/2}
    \quad\text{and} \\
    \bar{\bm\omega}_{\bm\nu}
    &\coloneqq \|2\bm\nu + 1\|_{\bullet}^{-1/2} \bm\rho^{\nu}
\end{align}
for $\bm{\nu} \in \mathbb{N}^M$.
For the sake of simplicity, we use the trivial weight function $w=1$ in~\eqref{eq:min_emp}.
In general, we could define $\ubar{\bm\omega}$ in terms of the weighted norms $\|L_{\bm\nu}\|_{L^\infty_w(\mu)} := \|wL_{\bm\nu}\|_{L^\infty(\mu)}$, $\bm\nu\in\mathbb{N}^M$.
Given these weight sequences and any $p\in(0,1)$, we choose any weight sequence $\bm\omega\in [0, \infty]^{\mathbb{N}^M}$ satisfying
\begin{equation}
\label{eq:weight_sequence_constraints}
    \frac{\ubar{\bm\omega}^2}{\bm\omega} \in \ell^1
    \quad\text{and}\quad
    \frac{\bm\omega^{(2-p)/p}}{\bar{\bm\omega}}\in\ell^1 \,.
\end{equation}
Note that, since $\ubar{\bm\omega} \ge 1$, the condition $\frac{\ubar{\bm\omega}^2}{\bm\omega} \in \ell^1$ trivially implies $\frac{\ubar{\bm\omega}}{\bm\omega} \in \ell^\infty$.

Finally, for any weight sequence $\bm\omega \in [0,\infty]^{\mathbb{N}^M}$ and any $0<p\le\infty$ we define the weighted $\ell^p$-space~\cite{RauhutWard2016_WeightedL1,TrunschkeEigelNouy2025}
\begin{equation}
    \ell^p_{\bm\omega}
    := \left\{ \bm x \in \mathbb{R}^{\mathbb{N}^M}\ :\ \|\bm x\|_{\ell^p_{\bm\omega}}
    := \|\bm\omega \bm x\|_{\ell^p} < \infty\right\} \,.
\end{equation}
Also central to our presentation is the weighted $\ell_0$-``norm'' given by
\begin{equation}
    \|\bm x\|_{\ell^0_{\bm\omega}}
    = \sum_{\bm m\in\operatorname{supp}(\bm x)} \bm\omega^2_{\bm m},
\end{equation}
which counts the squared weights of the non-zero entries of $\bm x$.
When $\bm\omega \equiv 1$, these weighted norms reproduce the standard $\ell^p$ norms.

Given these defintions, we can define the regularity space
$$
    \mathcal{S}
    := \{z\in L^2(\mu) : \hat{\bm{z}}\in\ell^p_{\bm{\omega}^{(2-p)/p}}\} \,.
$$

\subsection{Definitions of the model classes}
\label{sec:model classes}

We start by recalling the tensor-network-based model classes introduced in~\cite{TrunschkeEigelNouy2025}.
Our model classes correspond to subsets of the coefficient tensors
$$
    \{\hat{\bm v} \in \ell^2(\mathbb{N}^M) : \hat{\bm{v}}_{\bm m} = 0\ \text{for all}\ \bm{m}\not\in [N]^M \}
$$
that can be efficiently encodes as sparse tensor networks.
A brief introduction to tensor networks can be found in appendix~\ref{app:tensors}.
We define the first model class
\begin{equation}
\label{eq:MRromega_sparse}
    \mathcal{M}^1_{R,r,\bm\omega}
    := \bigcap_{m\in[M]} \bigcup_{Q\in\mathcal{Q}^1_{m,R,\bm\omega}} Q\mathcal{C}^1_{Q,r,\bm\omega}~,
\end{equation}
with
\begin{alignat*}{2}
    \mathcal{Q}^1_{m,R,\bm\omega} :=\ 
    &\{\,
    \operatorname{unfold}_{m}\big(U^{(1)}\circ\cdots\circ U^{(m-1)}\big)
    \otimes I_{d_m} \otimes
    \operatorname{unfold}_{1}\big(V^{(m+1)}\circ\cdots\circ V^{(M)}\big)^\intercal\\
    &\!: U^{(j)}, V^{(j)}\in\{0,1\}^{r_{j-1}\times d_j\times r_j}\text{ with }r_{j-1},r_j \le R \\
    &\!: \text{all } U^{(j)} \text{ are left-orthogonal and  $r_j$-sparse} \\
    &\!: \text{all } V^{(j)} \text{ are right-orthogonal and $r_{j-1}$-sparse}
    \,\}
\end{alignat*}
and
\begin{equation}
\label{eq:model_class_1_core}
    \mathcal{C}^1_{Q,r,\bm\omega}
    := B_{\ell^0_{Q^\intercal\bm\omega}}(0, r)
    = \big\{C \in\mathbb{R}^{r_{m-1}\times d_m\times r_m} \,:\, \|C\|_{\ell^0_{Q^\intercal \bm\omega}}\le r\big\}~.
\end{equation}

For the choice of $N\in \mathbb{N}$, consider $QC \in \mathcal{M}^1_{R,r,\bm\omega}$. 
By definition of $\mathcal{C}^1_{Q,r,\bm\omega}$, it holds $\|C\|_{\ell^0_{Q^\intercal\bm\omega}} \le r$, hence $$\sum_{\bm m \in \supp(QC)} \bm\omega_{\bm m}^2 \le r.$$ In particular, $\bm\omega_{\bm m}^2 \le r$ for all $\bm m \in \supp(QC)$.
Assume $\|\frac{\underline{\bm\omega}}{\bm\omega}\|_{\ell^\infty} \le 1$ with $\underline{\bm\omega}$ given by \eqref{eq:defOmega}.
Then $\|2\bm m+1\|_\bullet \le r$ on $\supp(QC)$. Choosing $$N \ge (r-1)/2$$ yields $\supp(QC) \subset [N]^M$, so that $\mathcal{M}^1_{R,r,\bm\omega}$ is contained in the
coefficient space supported on $[N]^M$.\\

To define the second model class, we require the following definition.
\begin{definition}
    We say that a matrix $Q$ is \emph{$\bm{\omega}$-orthogonal} if $Q^\intercal\operatorname{diag}(\bm{\omega})Q$ is diagonal.
\end{definition}

With this in hand, the second model class can be written as
\begin{equation}
\label{eq:MRromega}
    \mathcal{M}^{2}_{R,r,\bm{\omega}}
    := \bigcap_{m\in[M]} \bigcup_{Q\in\mathcal{Q}^2_{m,R,\bm{\omega}}} Q\mathcal{C}^2_{Q,r,\bm{\omega}}~,
\end{equation}
with
\begin{alignat*}{2}
    Q\in\mathcal{Q}^2_{m,R,\bm\omega} :=\ 
    &\{\,
    Q\in\mathcal{L}\big(\mathbb{R}^{r_{m}\times d_m\times r_{m+1}}, \mathbb{R}^{d_1\times \cdots\times d_M}\big)
    \\
    &\!: \text{$Q$ is orthogonal and $\boldsymbol{\omega}^{2}$-orthogonal and $r_m, r_{m+1} \le R$}
    \,\}
\end{alignat*}
and
\begin{equation}
    \mathcal{C}^2_{Q,r,\bm{\omega}}
    := \big\{C \in\mathbb{R}^{r_{m-1}\times d_m\times r_m} \ \big|\  \|C\|_{\ell^0_{\bm{\omega}_Q}}\le r \text{ with } \bm{\omega}_Q^2 := \operatorname{diag}(Q^\intercal\operatorname{diag}(\bm{\omega}^{2})Q)\big\}~.
\end{equation}

In contrast to the model class $\mathcal{M}^1_{R,r,\bm\omega}$, the restriction to coefficients supported on $[N]^M$ can in principle
lead to a smaller set than $\mathcal{M}^2_{R,r,\bm\omega}$. However, since the error bounds for $\mathcal{M}^2_{R,r,\bm\omega}$ rely on its closeness to other sparse spaces,\footnote{Specifically, $\mathcal{M}^1_{R,r,\bm\omega} \subseteq \mathcal{M}^2_{R,r,\bm\omega}$ and $d_{\mathrm{si}L^\infty_w}(\mathcal{M}^2_{R,r,\bm\omega}, B_{\ell^0_{\tilde{\bm\omega}}}(0, (1+\tfrac1{\varepsilon})^2r) \le \varepsilon$ for certain $\tilde{\bm\omega}$, see Theorem~\ref{thm:RIP_MRromega}.} we argue that similar error bounds can be obtained for a suitable choice of $N$.\\

These two classes give rise to the two families $\{\mathcal{M}^{1/2}_{\gamma}\}_{\gamma>0}$ defined by
\begin{equation}
    \mathcal{M}^{1/2}_\gamma
    := \{
        v\in Q : \hat{\bm{v}} \in \mathcal{M}^{1/2}_{\infty, r(\gamma), \bm\omega}
    \}
    \,,\quad \gamma>0\,,
\end{equation}
with $r(\gamma) > 0$ defined later in~\eqref{eq:r_gamma}.
We use the rank constraint $R=\infty$ since we are using the rank-adaptive algorithms from~\cite{TrunschkeEigelNouy2025} during the approximation.

\input{content/asn_proofs/level_error}

\input{content/asn_proofs/approx_error_eps}

\input{content/asn_proofs/emp_approx_error}
\input{content/asn_proofs/work_bound}
\input{content/asn_proofs/verification}

%% file: content/asn_proofs/level_error.tex
\subsection{Proof of assumption~\ref{asn:level_error}}

We start by proving the regularity assumption $g\in \mathcal{S}$ and $g_l\in \mathcal{S}$ for all $l\in\mathbb{N}$.
Then, we continue to prove the desired approximation error bound.
To do this, we define for $\bm{\rho}\in(0, \infty)^{M}$ the open polydiscs $B_{\mathbb{C}}(0, \bm{\rho}) := \prod_{m\in[M]} B_{\mathbb{C}}(0, \bm{\rho}_m)$.

\subsubsection*{Regularity assumption}

Let $\hat{\bm g}$ and $\hat{\bm g}_l$ denote the Legendre series coefficients of $g$ and $g_l$, respectively.
Then $g\in \mathcal{S}$ and $g_l\in \mathcal{S}$ for all $l\in\mathbb{N}$ is equivalent to $\hat{\bm g}\in\ell^p_{\bm\omega^{(2-p)/p}}$ and $\hat{\bm g}_l\in\ell^p_{\bm\omega^{(2-p)/p}}$ for all $l\in\mathbb{N}$.
These summability conditions can be shown with help of the subsequent two theorems.

\begin{theorem}
\label{thm:legendre_coefficient_bounds}
    Let $\bm\rho\in(1,\infty)^M$, $X$ be a Hilbert space and $v : B_{\mathbb{C}}(0, \bm\rho) \to X$ be holomorphic such that 
    $\|v\|_{L^\infty(B_{\mathbb{C}}(0, \bm\rho); X)} < \infty$.
    Moreover, let $p\in(0,\infty]$, and $\tilde{\bm\omega}\in(0,\infty)^{\mathbb{N}^M}$.
    Then the Legendre series coefficients $\hat{\bm{v}}$ of $v$ satisfy
    $$
        \|\hat{\bm{v}}\|_{\ell^p_{\tilde{\bm\omega}} \otimes X}
        \le \big\|\tfrac{\bm\rho^2}{\bm\rho^2 - 1}\big\|_{\bullet} \|\tfrac{\tilde{\bm\omega}}{\bar{\bm\omega}}\|_{\ell^p}  \|v\|_{L^\infty(B_{\mathbb{C}}(0, \bm\rho); X)}
    $$
    with $\bar{\bm\omega}_{\bm m} \coloneqq \|2\bm{m}+1\|_{\bullet}^{-1/2} \bm{\rho}^{\bm{m}}$.
    In particular, $\hat{\bm v}\in\ell^2\otimes X$ and $\hat{\bm v}\in\ell_{\bm\omega^{(2-p)/p}}^p\otimes X$ for all $p > 0$.
\end{theorem}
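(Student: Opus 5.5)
The core of the argument is a coefficientwise bound
$$
    \|\hat{\bm v}_{\bm m}\|_X \le \big\|\tfrac{\bm\rho^2}{\bm\rho^2-1}\big\|_\bullet\, \bar{\bm\omega}_{\bm m}^{-1}\, \|v\|_{L^\infty(B_{\mathbb{C}}(0,\bm\rho);X)}
    \qquad\text{for all } \bm m\in\mathbb{N}^M .
$$
Once this is available, the weighted $\ell^p$ estimate follows by multiplying by $\tilde{\bm\omega}_{\bm m}$ and taking the $\ell^p$ (quasi-)norm over $\bm m$. This yields exactly the stated right-hand side $\|\tilde{\bm\omega}/\bar{\bm\omega}\|_{\ell^p}$, for every $p\in(0,\infty]$, with no triangle inequality needed.

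To prove the coefficient bound, I would expand $v$ in its Taylor series on the polydisc. Since $v$ is holomorphic and bounded on $B_{\mathbb{C}}(0,\bm\rho)$, the Cauchy integral formula on polycircles of radius $\bm\rho'<\bm\rho$, followed by letting $\bm\rho'\to\bm\rho$, gives
$$
    v(\bm y)=\sum_{\bm k}\bm t_{\bm k}\bm y^{\bm k}
    \qquad\text{with}\qquad
    \|\bm t_{\bm k}\|_X\le \bm\rho^{-\bm k}\|v\|_{L^\infty}.
$$
This series converges absolutely and uniformly on $U$ because $\bm\rho>1$. The Legendre coefficients are therefore
$$
    \hat{\bm v}_{\bm m}=\sum_{\bm k}\bm t_{\bm k}\int \bm y^{\bm k}L_{\bm m}\,\mathrm d\mu ,
$$
and everything tensorises. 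It then suffices to prove the one-dimensional estimate
$$
    \sum_{k\ge m}\rho^{-k}\Big|\int_{-1}^1 y^kL_m\,\mathrm d\mu_1\Big| \le \frac{\rho^2}{\rho^2-1}\,(2m+1)^{1/2}\rho^{-m}.
$$

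For the one-dimensional estimate, I would use the explicit monomial–Legendre conversion. The integral $\int y^kL_m\,\mathrm d\mu_1$ vanishes unless $k\ge m$ and $k-m$ is even. In that case its closed form is
$$
    \sqrt{2m+1}\,\frac{k!}{2^{(k-m)/2}\,((k-m)/2)!\,(k+m+1)!!}.
$$
Alternatively, I would only need a bound of this quantity by $\sqrt{2m+1}$. This bound holds because $|y^k|\le1$ and $\|L_m\|_{L^1(\mu_1)}\le\|L_m\|_{L^2(\mu_1)}=1$. Summing the geometric series $\sum_{j\ge0}\rho^{-m-2j}=\rho^{-m}\rho^2/(\rho^2-1)$ then gives exactly the constant $\rho^2/(\rho^2-1)$. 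Taking the product over coordinates yields $\big\|\tfrac{\bm\rho^2}{\bm\rho^2-1}\big\|_\bullet\,\|2\bm m+1\|_\bullet^{1/2}\bm\rho^{-\bm m}$.

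The main obstacle is this last step. The crude bound $\sqrt{2m+1}$ produces the factor $\|2\bm m+1\|^{1/2}_\bullet$, which matches $\bar{\bm\omega}_{\bm m}^{-1}$ as defined, so I expect it to suffice. I also need to justify interchanging the Taylor sum with the integral, which follows from absolute convergence in $X$.

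For the ``in particular'' claims, take $\tilde{\bm\omega}\equiv1$ and $p=2$. The sum $\sum_{\bm m}(2\bm m+1)\bm\rho^{-2\bm m}$ factorises into finite one-dimensional series because $\bm\rho>1$, so $\hat{\bm v}\in\ell^2\otimes X$. For the second claim, take $\tilde{\bm\omega}=\bm\omega^{(2-p)/p}$. The required finiteness of $\|\bm\omega^{(2-p)/p}/\bar{\bm\omega}\|_{\ell^p}$ comes from the standing assumption~\eqref{eq:weight_sequence_constraints}. It is stated there in $\ell^1$, and I would transfer it to $\ell^p$ either via the $\ell^p$ choice made in the paper or by noting that this is the intended hypothesis.
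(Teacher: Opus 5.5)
Your argument is correct and has the same skeleton as the paper's proof. Both use Cauchy estimates $\|\bm t_{\bm k}\|_X\le\bm\rho^{-\bm k}\|v\|_{L^\infty}$ for the Taylor coefficients. Both re-expand monomials in Legendre polynomials, with conversion coefficients supported on $k\in m+2\mathbb{N}$. Both bound these coefficients and finish with the geometric sum $\sum_{\bm n}\bm\rho^{-2\bm n}=\|\tfrac{\bm\rho^2}{\bm\rho^2-1}\|_\bullet$.

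The real difference is how the conversion coefficients are handled. The paper proves the explicit tensorised formula by induction over $M$ from the known univariate case. It then shows $\alpha_{m,k}\le(2m+1)^{1/2}$ by comparing Pochhammer symbols and applying Stirling's formula. You instead exchange sum and integral, let Fubini factorise $\int\bm y^{\bm k}L_{\bm m}\,\mathrm{d}\mu$, and bound each factor by $|\int y^kL_m\,\mathrm{d}\mu_1|\le\|L_m\|_{L^1(\mu_1)}\le\|L_m\|_{L^2(\mu_1)}=1$. Your closed form is correct and agrees with the paper's $\alpha_{m,k}$, but you never actually need it.

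Your route is much shorter, and stronger than you seem to realise. It gives the factor $1$ rather than $\sqrt{2m+1}$, so the step you call the main obstacle holds with room to spare. You in fact obtain $\|\hat{\bm v}_{\bm m}\|_X\le\|\tfrac{\bm\rho^2}{\bm\rho^2-1}\|_\bullet\,\bm\rho^{-\bm m}\|v\|_{L^\infty}$, which is the theorem with $\bar{\bm\omega}_{\bm m}$ replaced by the larger weight $\bm\rho^{\bm m}$. The paper's explicit formula does carry finer information, namely the $n^{-1/2}$ and $2^{-m}$ decay it derives along the way. That information is discarded in the final estimate anyway.

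For the consequences, deriving $\hat{\bm v}\in\ell^2\otimes X$ directly from $\sum_{\bm m}\|2\bm m+1\|_\bullet\bm\rho^{-2\bm m}<\infty$ is more self-contained than the paper's route through the weight assumptions. Your observation about the weighted claim is also accurate. It needs $\bm\omega^{(2-p)/p}/\bar{\bm\omega}\in\ell^p$, which for $p<1$ is strictly stronger than the $\ell^1$ condition in~\eqref{eq:weight_sequence_constraints}. The paper's proof silently invokes exactly this $\ell^p$ version.
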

This theorem is a slight generalisation of Theorem~3.4 in~\cite{TrunschkeEigelNouy2025} and its proof can be found in Appendix~\ref{app:proof:thm:legendre_coefficient_bounds}.

\begin{theorem}
\label{thm:supremum_norm_bound}
    Let $u$ be the solution of the diffusion equation~\eqref{eq:darcy} and $u_l$ the corresponding Galerkin projection onto $V_l$.
    Moreover, assume that the affine coefficients~\eqref{eq:diffusionCoefficientReal} satisfy~\eqref{eq:rho-UEA}.
    Then $u : B_{\mathbb{C}}(0, \bm\rho) \to V$ and $u_l : B_{\mathbb{C}}(0, \bm\rho) \to V$ are holomorphic with
    $$
        \|u\|_{L^\infty(B_{\mathbb{C}}(0, \bm\rho); V)}
        \le \tfrac{1}{c} \|f\|_{V^*}
        \,,
        \quad 
        \|u_l\|_{L^\infty(B_{\mathbb{C}}(0, \bm\rho); V)}
        \le \tfrac{1}{c} \|f\|_{V^*}
        \,.
    $$
\end{theorem}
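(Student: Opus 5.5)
The plan is to extend the variational problems \eqref{eq:variational_form} and \eqref{eq:variationalFE} to complex parameters $\bm z\in B_{\mathbb{C}}(0,\bm\rho)$. We then show coercivity uniform in $\bm z$, which gives the bounds, and derive holomorphy from the analytic dependence of the operator on $\bm z$. Here $c$ in the statement should be read as the ellipticity constant $r_{\bm\rho}$ from \eqref{eq:rho-UEA}.

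\emph{Step 1 (complexification and coercivity).} Let $V_{\mathbb{C}}$ be the complexification of $V=H^1_0(D)$. For $\bm z\in B_{\mathbb{C}}(0,\bm\rho)$ define the sesquilinear form $\mathcal{A}(u,v;\bm z)=\int_D a(x,\bm z)\nabla u\cdot\overline{\nabla v}\,\mathrm{d}x$, with $a(x,\bm z)=a_0(x)+\sum_m \bm z_m a_m(x)$. Since $|\bm z_m|<\bm\rho_m$, we have
\[
\operatorname{Re} a(x,\bm z)\ge a_0(x)-\sum_m |\bm z_m||a_m(x)|\ge a_0(x)-\sum_m\bm\rho_m|a_m(x)|\ge r_{\bm\rho}.
\]
Hence $\operatorname{Re}\mathcal{A}(v,v;\bm z)\ge r_{\bm\rho}\|\nabla v\|_{L^2}^2=r_{\bm\rho}\|v\|_V^2$, taking the $H^1_0$ seminorm as the norm on $V$. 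Boundedness of the form follows from $|a|\le R_{\bm\rho}$. The complex Lax--Milgram lemma then gives a unique solution $u(\bm z)\in V_{\mathbb{C}}$. Testing with $v=u(\bm z)$ yields
\[
r_{\bm\rho}\|u(\bm z)\|_V^2\le|\langle f,u(\bm z)\rangle|\le\|f\|_{V^*}\|u(\bm z)\|_V,
\]
which is the stated bound. The same argument applies verbatim on the closed subspace $V_l$, since coercivity restricts to subspaces, and gives the bound for $u_l$ uniformly in $l$.

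\emph{Step 2 (holomorphy).} Write $A(\bm z)=A_0+\sum_m \bm z_m A_m$, where $A_m\in\mathcal{L}(V_{\mathbb{C}},V_{\mathbb{C}}^*)$ is induced by $a_m$; this is affine, and hence holomorphic, in $\bm z$. By Step 1, $A(\bm z)$ is invertible with $\|A(\bm z)^{-1}\|\le 1/r_{\bm\rho}$ on the whole polydisc. Operator inversion is holomorphic on the open set of invertible operators (Neumann series), so $\bm z\mapsto u(\bm z)=A(\bm z)^{-1}f$ is holomorphic.

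Alternatively, one can argue separately in each variable. The difference quotient in $\bm z_m$ converges to $-A(\bm z)^{-1}A_m u(\bm z)$, using the resolvent identity
\[
A(\bm z')^{-1}-A(\bm z)^{-1}=A(\bm z')^{-1}\bigl(A(\bm z)-A(\bm z')\bigr)A(\bm z)^{-1}
\]
together with the uniform bound on the inverses. Separate holomorphy combined with local boundedness then gives joint holomorphy (Hartogs/Osgood). For $u_l$ the same argument applies with the Galerkin operator $A_l(\bm z)$ on $V_l$, and the result is viewed as $V$-valued through the inclusion $V_l\subset V$.

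\emph{Main obstacle.} The only delicate points are formal. First, the real space $V$ in the statement must be interpreted via its complexification, with the real solution on $U$ recovered by restriction. Second, one must make sure that the definition of $r_{\bm\rho}$ (an infimum over $x$ taken with $|\bm z_m|\le\bm\rho_m$) is what controls $\operatorname{Re} a$ on the open polydisc; this holds because $|\bm z_m|<\bm\rho_m$ there. I would handle the holomorphy via the Neumann series argument, because it avoids appealing to Hartogs' theorem.
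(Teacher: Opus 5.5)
Your proposal is correct and follows essentially the paper's proof. The bound comes from uniform coercivity on the polydisc via the $\bm\rho$-weighted uniform ellipticity assumption plus Lax--Milgram, also on $V_l$; holomorphy comes from writing $\bm y\mapsto B(a(\bm y))^{-1}f$ as an affine map composed with operator inversion. Your version is slightly more careful than the paper's: you bound $\operatorname{Re} a(x,\bm z)$ and $\operatorname{Re}\mathcal{A}(v,v;\bm z)$ on the complexified space, whereas the paper writes $a(x,\bm y)\ge c$ directly for complex $\bm y$.
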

This theorem is a slight generalisation of Theorem~3.2 in~\cite{TrunschkeEigelNouy2025}.
The proof is given in Appendix~\ref{app:proof:thm:supremum_norm_bound}.

To prove the inclusions $\hat{\bm g}\in\ell^p_{\bm\omega^{(2-p)/p}}$ and $\hat{\bm g}_l\in\ell^p_{\bm\omega^{(2-p)/p}}$, note that Theorem~\ref{thm:supremum_norm_bound} ensures that $u$ and $u_l$ are analytic and uniformly bounded on $B_{\mathbb{C}}(0, \bm\rho)$.
Therefore, $g = G\circ u$ and $g_l = G \circ u_l$ are analytic and their norm is uniformly bounded by
\begin{align}
    \|g\|_{L^\infty(B_{\mathbb{C}}(0, \bm\rho))}
    &\le \|G\|_{V^*} \|u\|_{L^\infty(B_{\mathbb{C}}(0, \bm\rho); V)}
    \le \|G\|_{V^*} \tfrac{1}{c}\|f\|_{V^*}\quad\text{and}\\
    \|g_l\|_{L^\infty(B_{\mathbb{C}}(0, \bm\rho))}
    &\le \|G\|_{V^*} \|u\|_{L^\infty(B_{\mathbb{C}}(0, \bm\rho); V)}
    \le \|G\|_{V^*} \tfrac{1}{c}\|f\|_{V^*}
    \,.
\end{align}
Theorem~\ref{thm:legendre_coefficient_bounds} thus ensures the desired summability properties.

\subsubsection*{Approximation error bound}

Our proof of the approximation error bound relies on the following approximation property of the discretisation spaces $V_l$.
\begin{theorem}[Theorem~32.2~in~\cite{Ern2021FEII}.]
\label{thm:smoothness_scales}
    There is a constant $C'> 0$ such that for $0 < t \le \operatorname{deg}(V_l)$ and $l\in\mathbb{N}_0$
    \begin{equation}
        \inf_{w \in V_l}\Vert v - w \Vert_{H_0^1(D)} 
        \le C' h_l^t \Vert v \Vert_{H^{1+t}(D)}
        \quad\text{for all}\quad
        v \in H^{1+t}(D)
        \,.
    \end{equation}
\end{theorem}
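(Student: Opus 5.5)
The plan is the standard quasi-interpolation argument, with $v\in H^{1+t}(D)\cap H^1_0(D)$ implicitly assumed so that the left-hand side measures approximation in $V$. First I will construct a bounded linear quasi-interpolation operator $\mathcal{I}_l : H^1_0(D)\to V_l$ that preserves homogeneous boundary conditions. I will use the Scott--Zhang operator, or the averaging-based quasi-interpolant of Ern--Guermond, built on $\mathcal{T}_l$ with polynomial degree $k:=\operatorname{deg}(V_l)$. Its required properties are:
\begin{itemize}
\item it is a projection onto $V_l$, so it reproduces polynomials of degree $\le k$ locally;
\item it is local, with $\mathcal{I}_l v|_K$ depending only on $v$ on the patch $\omega_K$ of elements touching $K$;
\item it is locally $H^1$-stable: $|\mathcal{I}_l v|_{H^1(K)}\lesssim \sum_{j\le 1} h_K^{j-1}|v|_{H^j(\omega_K)}$.
\end{itemize}
Shape regularity of the affine meshes guarantees that the stability constant and the number of elements per patch are bounded uniformly in $l$. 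The infimum over $w\in V_l$ is then bounded by $\|v-\mathcal{I}_l v\|_{H^1_0(D)}$.

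Next I will do the local estimate on each element. Since $\mathcal{I}_l$ reproduces polynomials, for every $\pi\in\mathbb{P}_k(\omega_K)$ we have $v-\mathcal{I}_l v=(v-\pi)-\mathcal{I}_l(v-\pi)$. Local stability then gives
\begin{equation}
|v-\mathcal{I}_l v|_{H^1(K)}\lesssim \inf_{\pi\in\mathbb{P}_k}\Big(h_K^{-1}\|v-\pi\|_{L^2(\omega_K)}+|v-\pi|_{H^1(\omega_K)}\Big).
\end{equation}
I will bound the right-hand side by a Bramble--Hilbert (Deny--Lions) lemma on the patch, applied after an affine scaling to a reference configuration. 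Since shape regularity leaves only finitely many reference patch shapes up to uniformly bounded affine distortion, the constants are uniform. This yields $\lesssim h_K^{t}|v|_{H^{1+t}(\omega_K)}$ for $0<t\le k$. For integer $t$ this is the classical polynomial approximation estimate. For fractional $t=s+\theta$, I will use the fractional (Sobolev--Slobodeckij) version of Bramble--Hilbert: approximate by the averaged Taylor polynomial of degree $\lfloor t\rfloor$ and control the remainder with the Gagliardo seminorm. The scaling $h_K^{t}$ comes from how the seminorm transforms under the affine map.

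Then I will sum the squares over $K\in\mathcal{T}_l$. Because each element belongs to boundedly many patches, $\sum_K |v|^2_{H^{1+t}(\omega_K)}\lesssim \|v\|^2_{H^{1+t}(D)}$. For fractional seminorms this localisation only holds up to lower-order terms, and that is why the full norm appears on the right. Finally, $h_K\le h_l$ and Poincaré's inequality on $H^1_0$ give the bound $C' h_l^t\|v\|_{H^{1+t}(D)}$ with $C'$ independent of $l$.

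I expect the fractional case to be the main obstacle, because the Gagliardo seminorm does not localise additively. As a fallback I will prove the estimate only for integer $t\in\{0,\dots,k\}$, where $t=0$ is the stability bound $\|v-\mathcal{I}_lv\|_{H^1}\lesssim\|v\|_{H^1}$. I will then interpolate the linear operator $I-\mathcal{I}_l$ between $H^{1+s}$ and $H^{1+s+1}$ via the K-method. This gives $h_l^{t}$ for all real $t$ in between, with constants uniform in $l$ because the endpoint constants are.
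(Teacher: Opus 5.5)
The paper gives no proof of its own here: it imports the estimate from Ern--Guermond. There it follows from the approximation properties of an averaging quasi-interpolant: element-wise $L^2$-projection onto broken polynomials, then averaging of the degrees of freedom, with boundary degrees of freedom set to zero. Your reconstruction via the Scott--Zhang operator and patch-wise Bramble--Hilbert estimates is the other classical route, and it is correct in substance.

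The two routes differ in where polynomial approximation is needed. In the averaging construction it is needed only on single simplices, which are affinely equivalent to the reference element, so fractional orders only require the affine scaling of the Gagliardo seminorm. The conformity and boundary corrections are then controlled by jumps through discrete trace inequalities, with the trace on $\partial D$ counted as a jump against $v=0$. Your route needs Bramble--Hilbert on patches. Patches are not affine images of finitely many reference patches, so uniformity in $l$ must come from constants depending only on the chunkiness of the rescaled patch, as in Dupont--Scott. Your interpolation fallback avoids fractional polynomial approximation altogether. Its price is the identity $[H^{1+s}(D),H^{2+s}(D)]_{\theta,2}=H^{1+s+\theta}(D)$ on Lipschitz domains, which holds thanks to Stein's extension operator.

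Three small corrections.
\begin{enumerate}
\item You are right that $v\in H^1_0(D)$ must be added. For $v(x)=x_1$ the left-hand side is bounded below by the $H^1$-distance from $x_1$ to $H^1_0(D)$, since $V_l\subset H^1_0(D)$. Meanwhile the right-hand side tends to zero.
\item Being a projection onto the zero-trace space $V_l$ does not by itself give local reproduction of $\mathbb{P}_k$ on boundary patches. Moreover, $v-\mathcal{I}_lv=(v-\pi)-\mathcal{I}_l(v-\pi)$ applies $\mathcal{I}_l$ to $v-\pi$, which does not vanish on $\partial D$. So take the Scott--Zhang operator on all of $H^1(D)$, with boundary degrees of freedom computed on faces in $\partial D$. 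It reproduces $\mathbb{P}_k$ on every patch and maps $H^1_0(D)$ into $V_l$. This unconstrained operator is also the one to interpolate in your fallback, which avoids interpolating spaces with boundary conditions.
\item The fractional case is not an obstacle. The bound $\sum_K|v|^2_{H^{1+t}(\omega_K)}\le C|v|^2_{H^{1+t}(D)}$ holds exactly for the Gagliardo seminorm, because the sets $\omega_K\times\omega_K\subset D\times D$ have bounded overlap. Only the reverse inequality fails.
\end{enumerate}
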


Moreover, retracing the proof of Theorem~2 in~\cite{Savare1998} it can be shown that $u(\bm{y})$ is an element of the Sobolev--Slobodeckij space $H^{1+t}(D)$.
\begin{theorem}
\label{thm:guaranteeSolutionSmooth}
    Suppose that $a$ satisfies~\eqref{eq:UEA} and~\eqref{eq:lipschitz}.
    For a given $f\in V^*$, let $u(\bm{y})$ denote the solution to~\eqref{eq:variational_form}.
    There exists a constant $C'' > 0$, independent of $f$, such that for every $\bm{y}\in U$
    \begin{equation}
        \|u(\bm{y})\|_{H^{1+t}} \le C'' \|f\|_{H^{-1+t}}
        \quad\text{for all}\quad
        t \in [0, 1/2)\;.
    \end{equation}
\end{theorem}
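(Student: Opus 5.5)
We follow the strategy of Savaré's proof of Theorem~2 in~\cite{Savare1998}, namely a \emph{difference-quotient (Nirenberg translation) argument combined with interpolation}, and keep track of which constants depend on $\bm y$. Fix $\bm y\in U$ and write $a=a(\bullet,\bm y)$ and $u=u(\bm y)$. By~\eqref{eq:UEA} we have $r\le a\le R$, and by~\eqref{eq:lipschitz} we have $\operatorname{Lip}(a)$. Both bounds are uniform in $\bm y$, so any constant built from $r$, $R$, $\operatorname{Lip}(a)$ and the domain $D$ is also uniform in $\bm y$. This is exactly the $\bm y$-independence required of $C''$.

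\emph{Step 1 (endpoint $t=0$).} Lax--Milgram gives $\|u\|_{H^1}\le r^{-1}\|f\|_{H^{-1}}$.

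\emph{Step 2 (interior and tangential translations).} Localise with a partition of unity subordinate to Lipschitz charts of $D$. For a translation vector $h$ in the interior or in the tangential cone, test the equation for $\phi u$ with the difference $\phi u(\cdot+h)-\phi u$. Commute the translation with $a$; the commutator is bounded by $\operatorname{Lip}(a)|h|\,\|\nabla u\|_{L^2}$ thanks to~\eqref{eq:lipschitz}. This yields a Nikolskii-type estimate
\[
\|u(\cdot+h)-u\|_{H^1}^2\lesssim |h|\,\big(\|f\|_{H^{-1+t}}^2+\|u\|_{H^1}^2\big)
\]
for $f$ in the dual of $H^{1/2}$-type spaces. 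This shows that $u$ lies in the Besov space $B^{3/2}_{2,\infty}$ locally.

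\emph{Step 3 (normal direction).} Near the boundary, the normal derivative cannot be translated. As in Savaré's argument, recover it from the equation itself: it is controlled through the tangential derivatives together with ellipticity $a\ge r$. This shows $u\in B^{3/2}_{2,\infty}(D)$ with
\[
\|u\|_{B^{3/2}_{2,\infty}}\le C\|f\|_{(H^{1/2})^*},
\]
where $C$ depends only on $r$, $R$, $\operatorname{Lip}(a)$ and $D$.

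\emph{Step 4 (interpolation).} Use the embedding $B^{3/2}_{2,\infty}\hookrightarrow H^{1+t}$ for $t<1/2$. Then interpolate, since $f\mapsto u$ is linear, between the $t=0$ bound and the near-$3/2$ bound. This gives $\|u\|_{H^{1+t}}\le C''\|f\|_{H^{-1+t}}$ for all $t\in[0,1/2)$; the strict inequality $t<1/2$ is why the interval is half-open.

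The main obstacle is Step~3, the boundary treatment on a merely Lipschitz (polyhedral) domain. Translations there must be restricted to a cone of admissible directions, and the normal regularity must be extracted from the variational inequality. I expect to import this part essentially verbatim from~\cite{Savare1998}, only checking that each constant appears through $r$, $R$ and $\operatorname{Lip}(a)$.
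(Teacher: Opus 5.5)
Your overall plan matches the paper's. The paper gives no argument of its own beyond ``retracing the proof of Theorem~2 in Savar\'e''. The architecture you describe is the right one: a first-order translation (Nikolskii) estimate giving $u(\bm y)\in B^{3/2}_{2,\infty}(D)$ with a constant depending only on $r$, $R$, $\operatorname{Lip}(a)$ and $D$, followed by interpolation against the Lax--Milgram bound. Your remark that this dependence is what makes $C''$ uniform in $\bm y$ is also right.

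\textbf{The gap is the boundary mechanism.} The one-sided test in Step~2 is essentially Savar\'e's computation. You apply it only to interior and ``tangential'' directions and then rely on Step~3 to get the normal direction from the equation. That is the smooth-domain argument, it is not what Savar\'e does, and on a polyhedron it fails for three reasons.
\begin{enumerate}
\item Recovering the normal derivative from the equation is an $H^2$-type algebraic step (solve for $\partial_n(a\,\partial_n u)$ in flattened coordinates). It has no counterpart at the fractional level $B^{3/2}_{2,\infty}$.
\item That step needs a flattening of the boundary. Flattening by a merely Lipschitz chart $\psi$ produces coefficients containing $\nabla\psi\in L^\infty$, which destroys the commutator bound $\operatorname{Lip}(a)|h|$ your Step~2 relies on.
\item There is no usable tangential/normal split. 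At a vertex no direction is tangent to all adjacent faces, so for $\phi$ localized there, $\phi u(\cdot+h)$ and $\phi u(\cdot-h)$ cannot both be admissible test functions.
\end{enumerate}

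\textbf{The missing idea} is that the cone of admissible directions (which your closing remark gestures at) replaces Step~3 altogether. Suppose locally $D=\{x_d>\psi(x')\}$ with $\operatorname{Lip}(\psi)\le\Lambda$. Then for all small $h=(h',h_d)$ in the open cone $K:=\{h_d<-\Lambda|h'|\}$, the zero extension of $w:=\phi u$ satisfies $w(\cdot+h)|_D\in H^1_0(D)$. Test the localized equation $\int_D a\nabla w\cdot\nabla v=\langle F,v\rangle$ with $v=w$ and $v=w(\cdot+h)$, and use $\int_D a\,|\nabla w(\cdot+h)|^2=\int_D a(\cdot-h)\,|\nabla w|^2$. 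This gives
\[
    r\,\|\nabla D_hw\|_{L^2}^2\le\operatorname{Lip}(a)\,|h|\,\|\nabla w\|_{L^2}^2+2\,|\langle F,D_hw\rangle|,
    \qquad D_hw:=w(\cdot+h)-w .
\]
Since $K$ has nonempty interior, every $e\in\mathbb{R}^d$ can be written as $e=h_1-h_2$ with $h_1,h_2\in K$ and $|h_1|+|h_2|\lesssim|e|$. Translation invariance of $L^2(\mathbb{R}^d)$ then gives $\|\nabla D_ew\|_{L^2}\le\|\nabla D_{h_1}w\|_{L^2}+\|\nabla D_{h_2}w\|_{L^2}$. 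So all directions, the normal one included, are controlled at once, and the equation is never solved for a normal derivative.

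\textbf{Two smaller corrections.}
\begin{enumerate}
\item Your displayed Nikolskii bound cannot hold with $\|f\|_{H^{-1+t}}$, $t<\tfrac12$, on the right. It would put $u$ in $B^{3/2}_{2,\infty}$ for all such data, which fails already for the Laplacian on a smooth domain. The factor $|h|$ needs data of order $-\tfrac12$. For $F\in L^2$ it is immediate via $|\langle F,D_hw\rangle|\le|h|\,\|F\|_{L^2}\|\nabla w\|_{L^2}$; for $F\in B^{-1/2}_{2,1}$ it follows by an absorption argument.
\item In Step~4 you must interpolate this endpoint bound into $B^{3/2}_{2,\infty}$ directly by the real method, using $(H^1,B^{3/2}_{2,\infty})_{2t,2}=H^{1+t}$. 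Embedding $B^{3/2}_{2,\infty}\hookrightarrow H^{1+t'}$ first and then interpolating only yields $H^{-1+t}\to H^{1+2tt'}$ with $2tt'<t$.
\end{enumerate}
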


To prove assumption~\ref{asn:level_error}, we first use
the boundedness of $G$ together with Céa's lemma to bound
\begin{equation}
    |g(\bm y) - g_l(\bm y)|
    \le \|G\|_{V^*} \| u(\bm{y}) - u_\ell(\bm{y}) \|_V
    \le \|G\|_{V^*} \big(\tfrac{R}{r}\big)^{1/2}
    \inf_{v_\ell \in V_\ell} \| u(\bm{y})- v_\ell \|_V
    \,,
\end{equation}
where $r$ and $R$ are the bounds from~\eqref{eq:UEA}.
Applying Theorems~\ref{thm:smoothness_scales} and~\ref{thm:guaranteeSolutionSmooth} with $h_l$ as defined in~\eqref{eq:meshWith} then yields
\begin{align}
    |g(\bm y) - g_l(\bm y)|
    \lesssim \inf_{v_\ell \in V_\ell} \| u(\bm{y})- v_\ell \|_V
    \lesssim 2^{-lt} \| u(\bm{y}) \|_{H^{1+t}(D)}
    \lesssim 2^{-lt} \| f \|_{H^{-1+t}(D)}
    \,.
\end{align}
Since $g$ and $g_l$ are analytic from $B_{\mathbb{C}}(0,\bm\rho)$ to $\mathbb{R}$, their difference is analytic too, and Theorem~\ref{thm:legendre_coefficient_bounds} ensures
$$
    \|g - g_l\|_{\mathcal{S}}
    = \|\hat{\bm g} - \hat{\bm g}_l\|_{\ell^p_{\bm\omega^{(2-p)/p}}}
    \lesssim \|g - g_l\|_{L^\infty(B_{\mathbb{C}}(0, \bm\rho))}
    \lesssim 2^{-lt} \| f \|_{H^{-1+t}(D)}
    \,.
$$

%% file: content/asn_proofs/approx_error_eps.tex
\subsection{Proof of assumption~\ref{asn:approx_error_eps}}

Without rank restrictions, the model class $\mathcal{M}^1_{\infty, r,\bm\omega}$ is equal to the set of weighted sparse vectors
$$
    \mathcal{M}^1_{\infty,r,\bm\omega} = B_{\ell^0_{\bm\omega}}(0, r(\gamma)) \,.
$$
Moreover, note that the matrices $Q\in\mathcal{Q}^1_{m,\infty,\bm\omega}$ are quasi-permutation matrices (every column is a standard basis vector) and thereby, trivially, orthogonal and $\bm\omega$-orthogonal.
This implies, that
$$
    \mathcal{M}^1_\gamma = \mathcal{M}^1_{\infty,r(\gamma),\bm\omega} \subseteq \mathcal{M}^2_{\infty,r(\gamma),\bm\omega} = \mathcal{M}^2_\gamma \,.
$$

To prove assumption~\ref{asn:approx_error_eps}, note that for all $z\in\mathcal{S}$ and $v\in\mathcal{M}^{1/2}_{\infty,r,\bm\omega}$
$$
    \|z - v\|_{L^\infty(\mu)}
    \le \|\hat{\bm z} - \hat{\bm v}\|_{\ell^1_{\ubar{\bm{\omega}}}}
    \,.
$$
Using this, we can bound
\begin{align}
    \inf_{v\in\mathcal{M}^2_\gamma} \|z - v\|_{L^\infty(\mu)}
    &\le \inf_{v\in\mathcal{M}^1_\gamma} \|z - v\|_{L^\infty(\mu)} \\
    &= \inf_{v\in B_{\ell^0_{\bm\omega}}(0, r(\gamma))} \|z - v\|_{L^\infty(\mu)} \\
    &\le \inf_{\|\hat{\bm v}\|_{\ell^0_{\bm\omega}} \le r(\gamma)} \|\hat{\bm z} - \hat{\bm v}\|_{\ell^1_{\ubar{\bm\omega}}} \\
    &\le \|\tfrac{\ubar{\bm\omega}}{\bm\omega}\|_{\ell^\infty} \inf_{\|\hat{\bm v}\|_{\ell^0_{\bm\omega}} \le r(\gamma)} \|\hat{\bm z} - \hat{\bm v}\|_{\ell^1_{\bm\omega}} \\
    &\le r(\gamma)^{-s} \|\hat{\bm q}\|_{\ell^p_{\omega^{(2-p)/p}}}
\end{align}
with $s = \frac1p - 1$.
Here, the second-to-last inequality follows from Lemma~2.11 in~\cite{TrunschkeEigelNouy2025}.
The upper bound remains finite, since $\|\frac{\ubar{\bm\omega}}{\bm\omega}\|_{\ell^\infty} \le \|\frac{\ubar{\bm\omega}}{\bm\omega}\|_{\ell^1} < \infty$.\footnote{Under reasonable assumptions, we even have $\|\frac{\ubar{\bm\omega}}{\bm\omega}\|_{\ell^\infty}\le 1$.}
The remaining infimum is precisely the \emph{$\bm\omega$-weighted $r(\gamma)$-sparse approximation error} proposed in~\cite{RauhutWard2016_WeightedL1}.
The last inequality bounds this infimum using Corollary~2.8 in~\cite{TrunschkeEigelNouy2025}.
Choosing
\begin{equation}
\label{eq:r_gamma}
    r(\gamma) := \gamma^{-1/s}
    \,,\quad
    s = \frac1p - 1
\end{equation}
yields the desired bound
$$
    \inf_{v\in\mathcal{M}^2_\gamma} \|q - v\|_{L^\infty(\mu)}
    \le \inf_{v\in\mathcal{M}^1_\gamma} \|q - v\|_{L^\infty(\mu)}
    \le C \gamma \|\hat{\bm q}\|_{\ell^p_{\omega^{(2-p)/p}}}
    = C \gamma \|q\|_{\mathcal{S}}
$$
with $C = \|\frac{\ubar{\bm\omega}}{\bm\omega}\|_{\ell^\infty}$.

%% file: content/asn_proofs/emp_approx_error.tex
\subsection{Proof of assumption~\ref{asn:emp_approx_error}}

The proof of this assumption proceeds in two steps.
First we show that the approximation $P_{\mathcal{\gamma}}^n g_l$ exhibits the desired error bounds, assuming that the \emph{restricted isometry property} (RIP)~\cite{Candes2008,Adcock2022,Trunschke2021} is satisfied.
Then, we show that this property is satisfied with high probability.

We begin by recalling the definition of the RIP.
For a given set of functions $A$, we define
\begin{equation}
    \operatorname{RIP}_{\mathcal{A}}(\delta)
    \quad\Leftrightarrow\quad
    (1-\delta)\|u\|_{L^2(\mu)}^2 \le \|u\|_{n}^2 \le (1+\delta)\|u\|^2
    \quad\text{for all}\quad
    u\in A \,.
\end{equation}
If this property is satisfied for a parameter $\delta\in(0,1)$, the error of estimator~\eqref{eq:min_emp} can be bounded as follows.
\begin{theorem}[Theorem~5.3~in~\cite{Adcock2022}]
\label{thm:rip_error}
    Suppose that $\operatorname{RIP}_{\mathcal{A}}(\delta)$ holds for the Minkowski (element-wise) difference $\mathcal{A} = \{P^n_{\mathcal{M}_\gamma}g_l\}-\mathcal{M}_\gamma$, then
    \begin{align}
        \Vert g_l - P_{\mathcal{M}_\gamma}^n g_l \Vert_{L^2(\mu)}
        &\le 
        \left( 1 + \frac{2}{\sqrt{1-\delta}} \right)
        \inf_{h \in \mathcal{M}_{\gamma}} 
        \Vert g - h \Vert_{L^\infty(\mu)}.
    \end{align}
\end{theorem}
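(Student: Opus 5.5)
The plan is the standard quasi-optimality argument for empirical least squares, in the form of Adcock et al. Write $P := P^n_{\mathcal{M}_\gamma} g_l$, let $h\in\mathcal{M}_\gamma$ be arbitrary, and take the infimum over $h$ at the end. The norm on the right-hand side will be read as $\|g_l - h\|_{L^\infty(\mu)}$, since the statement writes $g$ but is clearly applied to $g_l$. We first split by the triangle inequality,
$$
\|g_l - P\|_{L^2(\mu)} \le \|g_l - h\|_{L^2(\mu)} + \|h - P\|_{L^2(\mu)}.
$$
The first term is bounded by $\|g_l-h\|_{L^\infty(\mu)}$ because $\mu$ is a probability measure.

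For the second term, note that $P - h \in \{P\} - \mathcal{M}_\gamma = \mathcal{A}$, so $\operatorname{RIP}_{\mathcal{A}}(\delta)$ gives
$$
\|P - h\|_{L^2(\mu)} \le (1-\delta)^{-1/2}\|P-h\|_n .
$$
Since $\|\cdot\|_n$ is a seminorm, we have $\|P-h\|_n \le \|P - g_l\|_n + \|g_l - h\|_n$. By the minimizing property~\eqref{eq:min_emp}, $\|g_l - P\|_n \le \|g_l - h\|_n$, hence $\|P-h\|_n \le 2\|g_l-h\|_n$.

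It remains to bound the empirical norm by the sup norm. With $w=1$, as chosen in this section, we get
$$
\|g_l-h\|_n^2 = \tfrac1n\sum_i |g_l(\bm y_i)-h(\bm y_i)|^2 \le \|g_l-h\|_{L^\infty(\mu)}^2 .
$$
This requires the sample points to lie in the support of $\mu$, which holds almost surely. For a general weight the inequality must be read with the weighted sup norm $\|w(\cdot)\|_{L^\infty}$, and this is the only point needing care. The null-set issue is handled by working with the continuous (indeed analytic) representatives of $g_l$ and the polynomial $h$, for which the sup norm over $U$ coincides with the $L^\infty(\mu)$ norm.

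Combining the three bounds yields
$$
\|g_l-P\|_{L^2(\mu)} \le \Big(1 + \tfrac{2}{\sqrt{1-\delta}}\Big)\|g_l-h\|_{L^\infty(\mu)},
$$
and taking the infimum over $h\in\mathcal{M}_\gamma$ gives the claim. I do not expect a real obstacle. The argument is elementary, and the only subtle point is that the RIP is needed only on the difference set anchored at the random minimizer $P$, which is exactly how the hypothesis is stated.
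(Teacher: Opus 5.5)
Your proof is correct. The paper gives no argument of its own and imports the result from the cited reference; your chain (triangle inequality, the lower RIP bound applied to $P-h\in\mathcal{A}$, minimality of $P$ giving the factor $2$, and $\|\cdot\|_n\le\|\cdot\|_{L^\infty(\mu)}$ for $w=1$ using continuous representatives) is the standard proof and yields exactly the constant $1+2/\sqrt{1-\delta}$. Reading $g$ as $g_l$ on the right-hand side is the right interpretation, since with the exact $g$ the bound fails whenever $g\in\mathcal{M}_\gamma$ but $g_l$ is not. For a general weight, note that the last step holds with $\|w^{1/2}(\cdot)\|_{L^\infty(\mu)}$, not $\|w(\cdot)\|_{L^\infty(\mu)}$.
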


We now continue to prove that the RIP holds for both model classes with high probability.
For $\mathcal{M}^1_{\infty,r,\bm\omega}$, this is proven by a straight-forward generalisation of Corollary~4.12 in~\cite{TrunschkeEigelNouy2025}.
\begin{theorem}
\label{thm:sparse_sample_complexity}
	Fix parameters $\delta,p\in(0,1)$ and $\gamma > 0$ and let $r(\gamma)$ be defined as in~\eqref{eq:r_gamma}.
    Then there exists a universal constant $c > 0$ such that at most
	\begin{equation}
		n = c \delta^{-2} r(\gamma)\max\{M\log^3(r(\gamma)) \log(N), \log(p^{-1})\}
	\end{equation}
    i.i.d.\ sample points $\bm y_1,\ldots, \bm y_n\sim w^{-1}\mu$ are sufficient to ensure $\operatorname{RIP}_{\mathcal{A}}(\delta)$ for $\mathcal{A} = \{P_{\mathcal{M}^1_{\gamma}}^n g_l\} - \mathcal{M}^1_{\gamma}$ with a probability of at least $1-p$.
\end{theorem}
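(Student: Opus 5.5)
The plan is to reduce the claim to the weighted-sparse RIP for sparse vectors, in the form of Corollary~4.12 in~\cite{TrunschkeEigelNouy2025}.

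\textbf{Step 1: reduction to a weighted-sparse ball.} As shown in the proof of assumption~\ref{asn:approx_error_eps}, we have $\mathcal{M}^1_\gamma = B_{\ell^0_{\bm\omega}}(0,r(\gamma))$. Moreover, the choice $N\ge (r-1)/2$ gives $\supp(\hat{\bm v})\subset[N]^M$ for every element. Since $P^n_{\mathcal{M}^1_\gamma}g_l\in\mathcal{M}^1_\gamma$ and the weighted $\ell^0$-"norm" is subadditive over unions of supports, the Minkowski difference satisfies
$$
\mathcal{A}=\{P^n_{\mathcal{M}^1_\gamma}g_l\}-\mathcal{M}^1_\gamma \subseteq B_{\ell^0_{\bm\omega}}(0,2r(\gamma)) \cap \{\supp\subset[N]^M\}.
$$
The RIP is monotone under inclusion. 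It therefore suffices to prove the RIP uniformly on the larger, deterministic set $\mathcal{K}:=B_{\ell^0_{\bm\omega}}(0,2r)$ restricted to $[N]^M$. This removes the dependence on the random point $P^n g_l$, which would otherwise couple the set to the samples.

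\textbf{Step 2: RIP on $\mathcal{K}$.} I would invoke the weighted-sparse RIP theorem in the form of Corollary~4.12 in~\cite{TrunschkeEigelNouy2025}, which goes back to~\cite{RauhutWard2016_WeightedL1}. The hypothesis needed is $\|L_{\bm\nu}\|_{L^\infty_w}=\ubar{\bm\omega}_{\bm\nu}\le\bm\omega_{\bm\nu}$, i.e.\ $\|\ubar{\bm\omega}/\bm\omega\|_{\ell^\infty}\le 1$. We assumed this when choosing $N$, and otherwise it holds after rescaling $\bm\omega$ by this finite constant, which only changes $c$. Under this hypothesis every $v\in\mathcal{K}$ satisfies the Nikolskii-type bound $\|v\|_{L^\infty}^2\le \|\hat{\bm v}\|_{\ell^0_{\bm\omega}}\|v\|_{L^2}^2\le 2r\|v\|_{L^2}^2$, by Cauchy--Schwarz. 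The standard chaining argument (Dudley entropy integral over the covering numbers of the unit ball of $\mathcal{K}$ in the sup-norm) combined with Bernstein/Talagrand concentration then yields $\operatorname{RIP}_{\mathcal{K}}(\delta)$ with probability $1-p$, provided
$$
n\gtrsim \delta^{-2}r\max\{\log^3(r)\log(\#\text{dictionary}),\log(p^{-1})\}.
$$

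\textbf{Step 3: dictionary size.} The dictionary is $\{L_{\bm\nu}:\bm\nu\in[N]^M\}$, so $\log(\#\text{dictionary})=M\log N$. This produces exactly the factor $M\log^3(r)\log(N)$ in the statement, and the factor $2$ coming from $2r$ is absorbed into $c$. Combining Steps 1--3 gives the claim.

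\textbf{Main obstacle.} The delicate point is the generalisation itself: checking that Corollary~4.12 is stated, or can be restated, for a general weight sequence $\bm\omega$ satisfying~\eqref{eq:weight_sequence_constraints} rather than a specific one. The chaining bound requires only the pointwise domination $\ubar{\bm\omega}\le\bm\omega$ and finiteness of the index set. I would therefore retrace the covering-number estimate (Maurey's lemma applied to the $\ell^1_{\bm\omega}$-ball, into which the normalized elements of $\mathcal{K}$ embed with radius $\sqrt{2r}$) and confirm that only these two properties enter. A second, minor check is that the sampling density $w^{-1}\mu$ with $w=1$ is consistent with the normalisation used in the corollary.
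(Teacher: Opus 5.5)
Your proposal is correct and follows essentially the same route as the paper: you enlarge $\mathcal{A}$ to the deterministic set of $2r(\gamma)$-weighted-sparse vectors (the paper cites Proposition~4.11 of \cite{TrunschkeEigelNouy2025}, where you use subadditivity of $\|\cdot\|_{\ell^0_{\bm\omega}}$), rescale $\bm\omega$ so that it dominates $\ubar{\bm\omega}$, and apply Corollary~4.12 there. As in the paper, the rescaling multiplies the weighted sparsity level by $\|\ubar{\bm\omega}/\bm\omega\|_{\ell^\infty}^2$, so the resulting constant depends on $\bm\omega$ and is not strictly universal.
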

\begin{proof}
    Note that $\mathcal{M}^1_{\gamma} \simeq \mathcal{M}^1_{\infty, r(\gamma),\bm\omega}$ and, by Proposition~4.11 in~\cite{TrunschkeEigelNouy2025}, it holds that
    $$
        \mathcal{A}
        = \{P_{\mathcal{M}^1_{\gamma}}^n g_l\} - \mathcal{M}^1_{\gamma}
        \subseteq \mathcal{M}^1_{\gamma} - \mathcal{M}^1_{\gamma}
        = \mathcal{M}^1_{\infty,2r(\gamma),\bm\omega}
        =: \mathcal{B} \,.
    $$
    Since $\operatorname{RIP}_{\mathcal{B}}(\delta)$ implies $\operatorname{RIP}_{\mathcal{A}}(\delta)$, it suffices to prove $\operatorname{RIP}_{\mathcal{B}}(\delta)$.
	Recall that $\{L_{\bm m}\}_{\bm m\in\mathbb{N}^M}$ is an orthonormal basis for $L^2(\mu)$ and that by assumption~\eqref{eq:weight_sequence_constraints} there exists a constant $c_1$ such that $c_1 \bm\omega \ge \ubar{\bm\omega}$ with $\ubar{\bm\omega}_{\bm m} = \|w^{1/2} L_{\bm m}\|_{L^\infty(\mu)}$.
    Moreover, note that
    $$
        \mathcal{B}
        = \mathcal{M}^1_{\infty,2r(\gamma),\bm\omega}
        = \mathcal{M}^1_{\infty,2c_1r(\gamma), c_1\bm\omega}
        =: \tilde{\mathcal{B}} \,.
    $$
    Considering $\tilde{\mathcal{B}}$, the conditions for Corollary~4.12 in~\cite{TrunschkeEigelNouy2025} are satisfied and there exists a constant $c_2$ such that using
    $$
        n \le c_2 \delta^{-2} (2c_1r(\gamma))\max\{M \log^3(2c_1r(\gamma))\log(N), \log(p^{-1})\}
    $$
    i.i.d.\ sample points $\bm y_1,\ldots, \bm y_n\sim w^{-1}\mu$ ensures $\operatorname{RIP}_{\tilde{\mathcal{B}}}(\delta)$ with a probability of at least $1-p$.
\end{proof}

Combining Theorems~\ref{thm:rip_error} and~\ref{thm:sparse_sample_complexity} shows for every $g \in L^\infty(\mu)\subseteq L^2(\mu)$ that
$$
    \|g - P_{\mathcal{M}^1_\gamma}^{n_{p,\gamma}}g\|_{L^2(\mu)}
    \le \underbrace{\left( 1 + \frac{2}{\sqrt{1-\delta}} \right)}_{=: B} \inf_{v\in\mathcal{M}^1_\gamma} \|g - v\|_{L^\infty(\mu)}
$$
with probability $1-p$ when
\begin{align}
    n_{p,\gamma}
    &\ge c \delta^{-2} r(\gamma)\max\{M\log^3(r(\gamma)) \log(N), \log(p^{-1})\} \\
    &\ge \tilde{c} c \delta^{-2} r(\gamma)^{1+\varepsilon} \log(p^{-1}) \\
    &= \tilde{c} c \delta^{-2} \gamma^{-1/s+\varepsilon} \log(p^{-1}) \,. \label{eq:npgamma}
\end{align}
Note that the last inequality ignores the logarithmic terms by increasing the multiplicative constant by a factor of $\tilde{c}$ and slightly worsening the asymptotic rate by $\varepsilon$.
Defining $n_{0} := \tilde{c}c\delta^{-2}$ and $\beta := 1/s -\varepsilon= p/(1-p) -\varepsilon$ proves assumption~\ref{asn:emp_approx_error} for $\mathcal{M}^1_{\gamma}$.

For $\mathcal{M}^2_{\infty, r, \bm\omega}$ the RIP is proven by a simple reformulation of Corollary~5.14 in~\cite{TrunschkeEigelNouy2025}.
\begin{theorem}
\label{thm:semi-sparse_sample_complexity}
    Fix parameters $\delta,p\in(0,1)$ and $\gamma > 0$, and let $r(\gamma)$ be defined as in~\eqref{eq:r_gamma}.
    Then there exists a universal constant $C>0$ such that at most
    \begin{equation}
        n = C \delta^{-2} r(\gamma)\max\big\{M\log^3(r(\gamma))\log(N), \log(p^{-1})\big\}
    \end{equation}
    i.i.d.\ sample points $\bm y_1,\ldots, \bm y_n\sim w^{-1}\mu$ are sufficient to ensure $\operatorname{RIP}_{\mathcal{A}}(\delta)$ for $\mathcal{A} = \{P_{\mathcal{M}^2_{\gamma}}^n g_l\} - \mathcal{M}^2_{\gamma}$ with a probability of at least $1-p$.
\end{theorem}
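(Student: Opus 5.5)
The plan mirrors the proof of Theorem~\ref{thm:sparse_sample_complexity}: reduce the RIP on the Minkowski difference $\mathcal{A}$ to a RIP on a fixed, $g_l$-independent set, and then apply Corollary~5.14 in~\cite{TrunschkeEigelNouy2025} after rescaling the weights.

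\textbf{Step 1 (fixed superset).} First I identify $\mathcal{M}^2_\gamma\simeq\mathcal{M}^2_{\infty,r(\gamma),\bm\omega}$. The estimator satisfies $P^n_{\mathcal{M}^2_\gamma}g_l\in\mathcal{M}^2_\gamma$, so
$$
\mathcal{A}\subseteq\mathcal{M}^2_\gamma-\mathcal{M}^2_\gamma=:\mathcal{B},
$$
and $\operatorname{RIP}_{\mathcal{B}}(\delta)$ implies $\operatorname{RIP}_{\mathcal{A}}(\delta)$. Unlike for $\mathcal{M}^1$, I do not expect $\mathcal{B}$ to equal $\mathcal{M}^2_{\infty,2r,\bm\omega}$. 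The difference of two elements with different $\bm\omega^2$-orthogonal frames $Q$ need not again admit such a frame. I therefore would not try to prove closure under differences. Instead I would use the fact, quoted in the paper's footnote from Theorem~\ref{thm:RIP_MRromega}, that $\mathcal{M}^2_{\infty,r,\bm\omega}$ lies within $\varepsilon$ (in the scale-invariant $L^\infty_w$ distance) of the weighted-sparse ball $B_{\ell^0_{\tilde{\bm\omega}}}(0,(1+\tfrac1\varepsilon)^2 r)$. The difference set then lies within $2\varepsilon$ of a weighted-sparse ball of radius $2(1+\tfrac1\varepsilon)^2r$.

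\textbf{Step 2 (normalising the weights).} By~\eqref{eq:weight_sequence_constraints} there is $c_1$ with $c_1\bm\omega\ge\ubar{\bm\omega}$. The weighted $\ell^0$ balls scale as in the previous proof: $\|x\|_{\ell^0_{c_1\bm\omega}}=c_1^2\|x\|_{\ell^0_{\bm\omega}}$, so I would carry the factor $c_1^2$ in place of $c_1$ there. The defining orthogonality constraints on $Q$ are invariant under this rescaling, since $\bm\omega^2$-orthogonality is unchanged by a scalar factor. This puts the set into the normalised setting ($\ubar{\bm\omega}\le$ weight) required by Corollary~5.14 in~\cite{TrunschkeEigelNouy2025}.

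\textbf{Step 3 (apply the corollary).} Corollary~5.14 gives $\operatorname{RIP}(\delta)$ for $\mathcal{B}$ with probability $1-p$ from
$$
n\lesssim\delta^{-2}\,r\,\max\{M\log^3(r)\log(N),\log(p^{-1})\}
$$
samples. I expect it to handle the $\varepsilon$-closeness internally: a RIP with constant $\delta'$ on the sparse ball, combined with $\varepsilon$-closeness in the sup-type norm, yields a RIP with constant $\delta'+O(\varepsilon)$ on $\mathcal{B}$. Fixing $\varepsilon$ as a function of $\delta$ (for example $\varepsilon\sim\delta$) turns the factor $(1+1/\varepsilon)^2$, together with $2c_1^2$ and the resulting shift in the logarithms, into a constant that can be absorbed. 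After setting $r=r(\gamma)$, all factors depending only on $c_1$ and the choice of $\varepsilon$ go into the universal constant $C$.

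\textbf{Main obstacle.} The delicate point is Step 1: controlling the difference set, which is not itself of the form $\mathcal{M}^2$, and absorbing the $\varepsilon$-dependence without changing the rate $r(\gamma)$. If Corollary~5.14 is already stated for such difference sets, this step reduces to bookkeeping of constants. Otherwise I would pass the RIP through the sparse surrogate as sketched above.
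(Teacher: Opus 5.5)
\textbf{Gap 1: the difference set in Step~1.} The paper does not avoid the difference set. It takes $\mathcal{M}^2_\gamma-\mathcal{M}^2_\gamma=\mathcal{M}^2_{\infty,2r(\gamma),\bm\omega}$ from Proposition~5.13 of the cited work. The distance bound of Theorem~\ref{thm:RIP_MRromega} then only has to be applied to a single class of the form $\mathcal{M}^2$.

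You discard this closure and replace it by the claim that the difference set lies within $2\varepsilon$ of a weighted-sparse ball of radius $2(1+\frac1\varepsilon)^2r$. That claim does not follow from closeness of each summand. Take $a_1,a_2\in\mathcal{M}^2_{\infty,r,\bm\omega}$ and sparse $b_1,b_2$ with $\|a_i-b_i\|_{L^\infty_w}\le\varepsilon\|b_i\|_{L^2}$. The triangle inequality only gives
\[
\|(a_1-a_2)-(b_1-b_2)\|_{L^\infty_w}\le\varepsilon\big(\|b_1\|_{L^2}+\|b_2\|_{L^2}\big).
\]
Scale-invariant closeness of $a_1-a_2$ instead needs a bound of order $\varepsilon\|b_1-b_2\|_{L^2}$. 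When $a_1\approx a_2$, the ratio $(\|b_1\|_{L^2}+\|b_2\|_{L^2})/\|b_1-b_2\|_{L^2}$ is unbounded. This is exactly the regime of $P^n_{\mathcal{M}^2_\gamma}g_l-h$ with $h$ close to the estimator.

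The same obstruction appears in the paper's proof of Theorem~\ref{thm:RIP_MRromega}. Its decisive inequality $\|A\|^2_{\ell^2_{\bm\omega}}=\|C\|^2_{\ell^2_{\bm\omega_Q}}\le r\|A\|^2_{\ell^2}$ uses one $\bm\omega^2$-orthogonal frame $Q$ with $A=QC$. For $A_1-A_2$ with different frames you only get $\|A_1-A_2\|_{\ell^2_{\bm\omega}}\le\sqrt r\,(\|A_1\|_{\ell^2}+\|A_2\|_{\ell^2})$.

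Consequently, Step~3 applies Corollary~5.14 (or a transfer from the sparse ball) to a set for which no hypothesis has been verified. Your doubt about the closure targets precisely the ingredient the paper imports. But once you drop it, you need some other structural statement about $\mathcal{M}^2_\gamma-\mathcal{M}^2_\gamma$, and the triangle-inequality patch is not one.

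\textbf{Gap 2: the $\delta$-bookkeeping in Step~3.} With $\varepsilon\sim\delta$, the factor $(1+\frac1\varepsilon)^2\sim\delta^{-2}$ multiplies the sparsity level of the comparison ball, and hence the sample size. It cannot be absorbed into a universal constant.

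The paper's actual route is:
\begin{enumerate}
\item apply Theorem~\ref{thm:RIP_MRromega} with $\tilde{\bm\omega}=\ubar{\bm\omega}$;
\item use Theorem~5.9 of the cited work: for $\delta\le\frac12$ and $c\ge\frac{15}{\delta}$, $\operatorname{RIP}(\tfrac\delta2)$ on $B_{\ell^0_{\ubar{\bm\omega}}}(0,\tilde r)$ implies $\operatorname{RIP}(\delta)$ on $\mathcal{M}^2_{\infty,2r(\gamma),\bm\omega}$;
\item use Theorem~1.5 there for the sparse ball.
\end{enumerate}
This yields a sample size of order $\delta^{-4}r(\gamma)\max\{M\log^3(\tilde r)\log(N),\log(p^{-1})\}$ with $\tilde r\sim\delta^{-2}r(\gamma)$. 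That is what the appendix actually arrives at, not the $\delta^{-2}$ in the statement.

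Likewise, $c_1$ and $\|\ubar{\bm\omega}^2/\bm\omega\|_{\ell^1}$ depend on $\bm\omega$, so the resulting constant is not universal. Your remark in Step~2 that rescaling the weights costs $c_1^2$ rather than $c_1$ is correct.
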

The proof of this theorem relies on a slight generalisation of Theorem~5.12 in~\cite{TrunschkeEigelNouy2025} and can be found in Appendix~\ref{app:proof:thm:semi-sparse_sample_complexity}.

Combining Theorems~\ref{thm:rip_error} and~\ref{thm:semi-sparse_sample_complexity} proves that assumption~\ref{asn:emp_approx_error} is satisfied for $\mathcal{M}^2_\gamma$ with $B = (1 + \frac{2}{\sqrt{1-\delta}})$, $n_{1,1} := \tilde{c}c\delta^{-2}$ and $\beta := 1/s -\varepsilon= p/(1-p) -\varepsilon$.

\begin{remark}
    The presence of $M$ in the sample size bounds of Theorems~\ref{thm:sparse_sample_complexity} and~\ref{thm:semi-sparse_sample_complexity} stems from the fact that, for general weight sequences $\bm{\omega}$, the sparsity constraint $\|C\|_{\ell^0_{\bm\omega}} \le r$ does not restrict the number of ``active variables'' of the functions in our model class.
    Even though we investigate approximation of functions with a potentially unbounded number of variables, this is not an issue in our setting, since we can choose anisotropic weight sequences like $\bm\omega_{\bm{\nu}} = \prod_{m=1}^M m^{\beta \bm{\nu}_m}$ (cf.~Section~\ref{sec:experiments}) to effectively truncate to $M<\infty$.
\end{remark}

%% file: content/asn_proofs/work_bound.tex
\subsection{Proof of assumption~\ref{asn:work_bound}}

We assume that the solver for the Galerkin discretized equation of level $\ell$ has a computational complexity scaling linearly with the dimension of $V_\ell$.
This can be achieved e.g.\ by multigrid solvers.
This means, that the work required for computing $g_l(\bm y)$ scales like
\begin{equation}
\label{eq:work_scale_Galerkin_dim}
    \tau_l \asymp \dim(V_l) \,.
\end{equation}
To bound this dimension, recall that the degrees of freedom of finite element methods scale like
$$
    h^{-d}q^2
$$
with respect to the mesh-width $h$, the physical domain dimension $d$ and the FE order $q$.
Since we assume that $h_l\asymp 2^{-l}$, we can conclude
\begin{equation}
    \label{eq:Galerkin_dim_scale_spacial_dim}
    \dim(V_l)
    \asymp 2^{dl} \,.
\end{equation}
Assumption~\ref{asn:work_bound} is therefore satisfied with $\delta = d$.

%% file: content/asn_proofs/verification.tex
\subsection{\texorpdfstring{$M$}{M}-independent boundedness of all constants}
\label{sec:verification}

In this section, we consider a diffusion coefficient of the form
\begin{equation}
    \label{eq:example_diffcoeff}
    a(x, \bm{y}) = a_{0} + \sum_{m=1}^{M} \bm{y}_{m}a_{m}(x), \quad \text{with} \quad a_{m}(x) = m^{-\eta}\prod_{k=1}^d\cos(\pi mx_k) ,
\end{equation}
Moreover, we let 
\begin{equation}
\label{eq:omega_xi}
    \bm\rho_m := cm^\theta
    \quad\text{and}\quad \bm\omega := (\bm\xi\bar{\bm\omega})^{p/(2-p)}
    \quad\text{with}\quad \bm\xi_{\bm\nu} := \bm\rho^{-\kappa\bm\nu}
\end{equation}
and compute the set of admissible parameters $\eta\in(1,\infty)$, $c>0$, $\theta\in[0,\infty)$, $\kappa\in[0,\infty)$, and $p\in(0,1)$ for which the stationary diffusion PDE~\eqref{eq:darcy} with coefficient~\eqref{eq:example_diffcoeff} satisfies the theoretical assumptions~\ref{asn:level_error}--\ref{asn:work_bound}.
For the convenience of the reader, we recall the four conditions requiring verification:
\begin{enumerate}[label=(\roman*)]
    \item The sequence $\bm\rho$ satisfies $\sum_{m\ge 1}\bm\rho_{m}\|a_{m}\|_{L^{\infty}} \le a_{0}-\delta$ for some $\delta > 0$, implying~\eqref{eq:rho-UEA}
    \label{itm:rho-UEA}
    \item The upper bound in Theorem~\ref{thm:legendre_coefficient_bounds} is finite, i.e., $\big\|\frac{\bm\rho^{2}}{\bm\rho^{2}-1}\big\|_{\bullet} < \infty$.
    \label{itm:rho_product_finite}
    \item The weight sequence satisfies the second inequality in~\eqref{eq:weight_sequence_constraints}, i.e.\ $\frac{\bm\omega^{(2-p)/p}}{\bar{\bm\omega}} \in \ell^{p}$ for $p \in (0,1)$.
    \label{itm:omega_lp}
    \item The weight sequence satisfies the first inequality in~\eqref{eq:weight_sequence_constraints}, i.e.\ $\frac{\ubar{\bm\omega}^{2}}{\bm\omega} \in \ell^{1}$.
    \label{itm:inv_omega_l1}
\end{enumerate}

To verify conditions \ref{itm:rho_product_finite}--\ref{itm:inv_omega_l1}, which involve potentially infinite products, we use the following standard bound.

\begin{lemma}
\label{lem:product_bound}
    Let $\pi \in [0,\infty)^{\mathbb{N}}$.
    Then,
    $1 + \|\pi\|_{\ell^{1}}
        \le \|1 + \pi\|_{\bullet}
        \le \exp(\|\pi\|_{\ell^{1}})$.
\end{lemma}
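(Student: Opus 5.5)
We prove the two inequalities separately. Both are easy once the infinite product is read as the limit of its finite partial products, so we argue for finite truncations and then pass to the limit. Write $\pi=(\pi_j)_{j\in\mathbb{N}}$ and $P_n := \prod_{j=1}^n (1+\pi_j)$. Since every factor is at least $1$, the sequence $P_n$ is nondecreasing. Hence $\|1+\pi\|_{\bullet} = \lim_{n\to\infty} P_n \in [1,\infty]$ is well defined, and likewise $\|\pi\|_{\ell^1} = \lim_n \sum_{j\le n}\pi_j \in[0,\infty]$. The statement is then understood in $[0,\infty]$: both sides may be infinite simultaneously, and the upper bound shows the product is finite whenever $\pi\in\ell^1$.

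For the lower bound we show $P_n \ge 1 + \sum_{j=1}^n \pi_j$ by induction on $n$. The case $n=0$ is trivial with the empty product equal to $1$. For the inductive step,
\begin{equation}
    P_{n+1} = P_n(1+\pi_{n+1}) \ge \Big(1+\sum_{j\le n}\pi_j\Big)(1+\pi_{n+1}) \ge 1 + \sum_{j\le n+1}\pi_j ,
\end{equation}
where the last step drops the nonnegative cross term $\pi_{n+1}\sum_{j\le n}\pi_j$. Equivalently, one can expand the product and discard every monomial of degree at least two, all of which are nonnegative because $\pi\ge 0$.

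For the upper bound we use the elementary inequality $1+x\le e^x$ for $x\ge 0$, which follows from convexity of $\exp$, or from $e^x = \sum_k x^k/k!$ with nonnegative terms. Applying it factorwise gives $P_n \le \prod_{j\le n} e^{\pi_j} = \exp\big(\sum_{j\le n}\pi_j\big)$.

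Letting $n\to\infty$ in both finite inequalities proves the claim, using monotone convergence of $P_n$ and of the partial sums together with continuity of $\exp$ on $[0,\infty]$. This limit passage is the only step requiring care, and it is handled entirely by the monotonicity noted above. No genuine obstacle is expected.
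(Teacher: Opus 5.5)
Your proof is correct and follows essentially the same route as the paper. You bound the finite partial products from below by discarding the nonnegative higher-degree terms (your induction is equivalent to the paper's expansion) and from above by applying $1+x\le e^x$ factorwise, then pass to the limit — and your justification of that limit via monotonicity in $[0,\infty]$ is slightly more careful than the paper's.
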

\begin{proof}
    Expanding the finite product $\prod_{m=1}^{M}(\pi_{m}+1)$ and collecting terms by monomial degree yields
    \begin{align}
        \prod_{m=1}^{M}(\pi_{m} + 1)
        &=\prod_{m=1}^{M}(|\pi_{m}| + 1) \\
        &= 1 + \sum_{m=1}^{M}|\pi_{m}| + [\text{higher degree terms}] \\
        &\ge 1 + \sum_{m=1}^{M} |\pi_{m}| \,.
    \end{align}
    Taking the limit $M\to\infty$ proves the lower bound.
    For the upper bound, we use the inequality $1+x \le \exp(x)$ for $x \ge 0$ to conclude that
    $$
        \prod_{m=1}^{M}(\pi_{m}+1)
        \le \prod_{m=1}^{M}\exp(\pi_m)
        = \exp\left(\sum_{m=1}^{M}|\pi_{m}|\right).
    $$
    Taking the limit $M \to \infty$ proves the claim.
\end{proof}

\subsubsection*{Condition~\ref{itm:rho-UEA}.}
Substituting $\bm\rho_m = cm^\theta$ and noting that $\|a_m\|_{L^\infty(D)} = m^{-\eta}$, the admissibility condition becomes
$$
    \sum_{m=1}^{\infty} c\, m^{\theta} m^{-\eta}
    = c \zeta(\eta-\theta)
    \le a_{0} - \delta .
$$
This condition is satisfied for any $\theta < \eta - 1$ and $\delta \in (0, a_0)$, provided we choose $c \le \frac{a_{0}-\delta}{\zeta(\eta-\theta)}$.

\subsubsection*{Condition~\ref{itm:rho_product_finite}.}
Under the condition that $c > 1$ we can use Lemma~\ref{lem:product_bound} to bound
$$
    \left\|\frac{\bm\rho^{2}}{\bm\rho^{2}-1}\right\|_{\bullet}
    = \left\| 1 + \frac{1}{\bm\rho^{2}-1} \right\|_{\bullet}
    \le \exp\left( \left\| \frac{1}{\bm\rho^{2}-1} \right\|_{\ell^{1}} \right).
$$
Finiteness of the exponent requires $\rho^{-2} \in \ell^1$ or, equivalently, $(m^{-2\theta})_{m\in\mathbb{N}} \in\ell^1$.
This holds if $\theta > \frac12$.

\subsubsection*{Condition~\ref{itm:omega_lp}.}
Since $\bm\omega = (\bm\xi \bar{\bm\omega})^{p/(2-p)}$, the condition $\frac{\bm\omega^{(2-p)/p}}{\bar{\bm\omega}} \in \ell^p$ reduces to $\bm\xi \in \ell^1$.
Using the product structure of $\bm\xi_{\bm\nu} = \bm\rho^{-\kappa \bm\nu}$ and employing Lemma~\ref{lem:product_bound}, we can bound
$$
    \sum_{\bm\nu\in\mathbb{N}^M} \bm\xi_{\bm\nu}
    = \prod_{m=1}^{M} \sum_{n\in\mathbb{N}} \bm\rho_m^{-\kappa n} = \prod_{m=1}^{M} \frac{1}{1 - \bm\rho_m^{-\kappa}}
    = \prod_{m=1}^{M} \bigg(1 + \frac{1}{\bm\rho_m^{\kappa}-1}\bigg)
    \le \exp\bigg(\sum_{m=1}^M \frac{1}{\bm\rho_m^{\kappa}-1}\bigg)
    .
$$
Finiteness of the exponent $\|\frac{1}{\bm\rho^{\kappa}-1}\|_{\ell^1}$ requires $\bm\rho^{-\kappa}\in\ell^1$.
This holds if  $(m^{-\kappa\theta})_{m\in\mathbb{N}}\in\ell^1$, i.e.\ if $\kappa\theta > 1$.

\subsubsection*{Condition~\ref{itm:inv_omega_l1}.}
Recall that $p<1$ and define $r := \frac{p}{2-p} \le 2$ and $\tilde{\bm\rho} := \bm\rho^{(1-\kappa)r}$.
Then, using $\bar{\bm\omega}_{\bm\nu} = \ubar{\bm\omega}_{\bm\nu}^{-1}\bm\rho^{\bm\nu}$ and $\bm\xi_{\bm\nu} := \bm\rho^{-\kappa\bm\nu}$, we can rewrite $\bm\omega_{\bm\nu} = \bar{\bm\omega}^r_{\bm\nu} \bm\xi_{\bm\nu}^r = \ubar{\bm\omega}^{-r}_{\bm\nu} \tilde{\bm\rho}^{\bm\nu}$ and conclude that
$$
    \frac{\ubar{\bm\omega}_{\bm\nu}^2}{\bm\omega_{\bm\nu}}
    = \ubar{\bm\omega}_{\bm\nu}^{2+r} \tilde{\bm\rho}^{-\bm\nu}
    \le \ubar{\bm\omega}_{\bm\nu}^{4} \tilde{\bm\rho}^{-\bm\nu} .
$$
Using the product structure of this upper bound and the inequality $2n+1 \le 3n$ for $n \ge 1$, we conclude that
\begin{align}
    \sum_{\bm\nu\in\mathbb{N}^M} \frac{\ubar{\bm\omega}_{\bm\nu}^2}{\bm\omega_{\bm\nu}}
    &\le \sum_{\bm\nu\in\mathbb{N}^M} \ubar{\bm\omega}_{\bm\nu}^{4} \tilde{\bm\rho}^{-\bm\nu} \\
    &= \prod_{m=1}^M\sum_{n=0}^\infty (2n+1)^2 \tilde{\bm\rho}^{-n}_m \\
    &\le \prod_{m=1}^M\bigg(1 + 9\sum_{n=1}^\infty n^2 \tilde{\bm\rho}_m^{-n}\bigg) \\
    &= \prod_{m=1}^M\bigg(1 + 9 \frac{\tilde{\bm\rho}_m(\tilde{\bm\rho}_m+1)}{(\tilde{\bm\rho}_m-1)^3} \bigg) .
\end{align}
Finally, using Lemma~\ref{lem:product_bound}, we see that there exists a constant $C > 0$ such that
$$
    \prod_{m=1}^M\bigg(1 + 9 \frac{\tilde{\bm\rho}_m(\tilde{\bm\rho}_m+1)}{(\tilde{\bm\rho}_m-1)^3} \bigg)
    \le \exp\bigg(C\sum_{m=1}^M \tilde{\bm\rho}_m^{-1}\bigg) .
$$
Since the exponent is bounded by $C \zeta((1-\kappa)r\theta))$, finiteness requires $(1-\kappa)r\theta > 1$.

\subsubsection*{Combined conditions.}
Combining the above results, the assumptions \ref{asn:level_error}--\ref{asn:work_bound} are satisfied if the parameters satisfy the following constraints:
\begin{itemize}
    \item $\bm\rho_m = cm^\theta$ with $c\in(1, \frac{a_0}{\zeta(\eta - \theta)})$ and $\theta \in (\frac12, \eta -1)$,
    \item $\bm\omega = (\bm\xi\bar{\bm\omega})^{p/(2-p)}$ with $\bm\xi_{\bm\nu} = \bm\rho^{-\kappa\bm\nu}$ and $\kappa \in (\frac1\theta, 1 - \frac{1}{\theta}\frac{2-p}{p})$, and
    \item $p\in(0, 1)$.
\end{itemize}
Before discussing the numerical experiments, a few notes are in order.
\begin{enumerate}
    \item Note that $1 < c < \frac{a_0}{\zeta(\eta - \theta)}$ requires $a_0 > \zeta(\eta - \theta)$.
    \item $\frac1\theta < \kappa < 1 - \frac{1}{\theta}\frac{2-p}{p}$ implies $0 < \kappa < 1$ and requires $\theta > \frac2p > 2$.
    \label{itm:theta_bound_by_p}
    \item Since $\frac12 < \theta < \eta - 1$ implies $\eta > \theta + 1$, point~(\ref{itm:theta_bound_by_p}) entails $\eta > 3$.
\end{enumerate}
Assuming $\eta>3$ is indeed a harsh requirement.
Note, however, that this requirement follows under the choice $\bm\omega = (\bm\xi\bar{\bm\omega})^{p/(2-p)}$ with $\bm\xi_{\bm\nu} = \bm\rho^{-\kappa\bm\nu}$.
This is an extreme choice of $\bm\omega$ that mirrors the exponential growth of the maximally growing weight sequence $\bar{\bm\omega}$ (cf.~Theorem~\ref{thm:legendre_coefficient_bounds}) and, naturally, requires high regularity.
In contrast, we could consider the other extreme
\begin{equation}
\label{eq:omega_kappa}
    \bm\omega_{\bm\nu} := \prod_{m=1}^M (2\bm\nu_{m} + 1)^{1 + \bm\kappa_m}
    \quad\text{for}\quad
    \bm\kappa_{m} := 2\,\log_2(m+1)
\end{equation}
that mirrors the polynomial growth of the minimally growing weight sequence $\ubar{\bm\omega}$ (cf.~Theorem~\ref{thm:sparse_sample_complexity} and its proof).
Since $\bm\omega \lesssim (\bm\xi\bar{\bm\omega})^{p/(2-p)}$, condition~\ref{itm:omega_lp} follows trivially.
Moreover, $\big(\frac{\ubar{\bm\omega}^2}{\bm\omega}\big)_{\bm\nu} = \prod_{m=1}^M (2\bm{\nu}_m + 1)^{-\bm\kappa_m}$ and
\begin{align}
    \sum_{\nu\in\mathbb{N}^M} \frac{\ubar{\bm\omega}_{\bm\nu}^2}{\bm\omega_{\bm\nu}}
    = \sum_{\nu\in\mathbb{N}^M} \prod_{m=1}^M (2\bm{\nu}_m + 1)^{-\bm\kappa_m}
    = \prod_{m=1}^M \sum_{n=0}^\infty (2n + 1)^{-\bm\kappa_m}
    = \prod_{m=1}^M (1 - 2^{-\bm{\kappa}_m}) \zeta(\bm\kappa_m) \,.
\end{align}
Inserting the well-known bound $\zeta(\kappa) \le 1 + 2\cdot 2^{-\kappa}$ for $\kappa > 1$, Lemma~\ref{lem:product_bound} bounds
$$
    \prod_{m=1}^M (1 - 2^{\bm\kappa_m}) \zeta(\bm\kappa_m)
    \le \prod_{m=1}^M (1 + 2^{-\bm\kappa_m})
    \le \exp\bigg(\sum_{m=1}^M 2^{-\bm\kappa_m}\bigg)
    = \exp\bigg(\sum_{m=1}^M \frac{1}{m^2}\bigg)
    = \exp\bigg(\frac{\pi^2}{6}\bigg) \,.
$$

%% file: content/numerics.tex
\section{Numerical experiments}
\label{sec:experiments}

Similar to~\cite[Section 6]{Bouchot2017}, we consider the stationary diffusion problem~\eqref{eq:darcy} on the spatial domain $D = [0,1]^d$ with $d=1$.
We discretize this domain with a uniform mesh, using first-degree Lagrange finite elements to define the ansatz space for the numerical solution of $u$. 
We consider parametric diffusion coefficient~\eqref{eq:diffusionCoefficientReal} of the form \eqref{eq:example_diffcoeff} with decay exponent $\eta =2$, truncated after $M=6$ terms.
To ensure positivity of $a(x, \bm{y})$ and hence well-posedness of the PDE, we assume that the constant coefficient satisfies $a_{0} \ge \zeta(2) = \sum_{m=1}^{\infty}m^{-2}$.
The source term in~\eqref{eq:darcy} is given by $f \equiv 100$.
As a QoI $g\colon U \to\mathbb{R}$ we consider
\begin{equation*}
    g(\bm{y}) = \int_D u(x,\bm{y}) \, dx, \quad \bm{y} \in U.
\end{equation*}

To evaluate the effectiveness of our approach, we vary the number of levels $L = 1, \dots, 5$ and choose $h_0$ to keep the mesh width on the finest level constant at $h_L = 2^{-10}$.
For every choice of $L$, we approximate the QoI using a sequence of $10$ sample sizes to obtain a curve of ``error'' against ``required work''.
The computation of the ``error'' is detailed in subsection~\ref{sec:error}.
The ``required work'' is proportional to the cumulative time required to solve the FE problems over all levels $l\in[L]$ and for all samples.
The $10$ different sample sizes on level $l\in[L]$ are given by
$$
    n_{l,k} = c 2^{k/2} 2^{(L-l)/2} ,\quad\text{with}\quad k\in[10],
$$
yielding $10$ different estimators with increasing work.
The choice $n_{l,k} \propto 2^{(L-l)/2}$ is given by equation~\eqref{eq:level_wise_sample_size} with $\alpha\beta \le 2$, as derived in Section~\ref{sec:application}.
Allowing the choice $L=1$, we also compare the results of the multi-level method with those of the single-level method.
Each plot includes the ``FEM error'' as a reference for the minimally attainable error.

In our experiments we compare the model classes $\mathcal{M}_{\infty, r,\bm{\omega}}^1$ and $\mathcal{M}_{\infty, r,\bm{\omega}}^2$, defined in equations~\eqref{eq:MRromega_sparse} and~\eqref{eq:MRromega}, for two different weight sequences $\bm\omega$.
Assuming a polynomial growth for the radius sequence $\bm\rho_{m} = cm^{\beta}$, as in section~\ref{sec:verification}, leads us to choose $\bm\omega$, according to~\eqref{eq:omega_xi}, as
\begin{equation}
    \label{eq:weight_sequence_stronger_decay}
    \bm\omega_{\bm\nu} \propto \prod_{m=1}^M \sqrt{2\bm\nu_m +1} m^{3/4\bm\nu_m} .
\end{equation}
This sequence grows significantly faster than the one used in~\cite{Bouchot2017}.
For a finite expansion ($M<\infty$), we can choose a constant radius sequence $\bm\rho$, yielding the same weight sequence as in~\cite{Bouchot2017}, namely,
\begin{equation}
    \label{eq:weight_sequence_weaker_decay}
    \bm\omega_{\bm\nu} \propto \prod_{m=1}^M \sqrt{2\bm\nu_m +1} \,.
\end{equation}
The resulting $\bm\omega$ is equivalent to the minimally growing sequence $\ubar{\bm\omega}$ and similar to the weight sequence considered in~\eqref{eq:omega_kappa}.

Approximation in the model class $\mathcal{M}^1_{\infty,r,\bm\omega}$ was performed using the \emph{sparse alternating least squares} (SALS) algorithm~\cite[Section~4]{TrunschkeEigelNouy2025}, cf. Appendix~\ref{app:tensors}.
The choice of unbounded rank ($R = \infty$) is justified by the fact that the SALS algorithm adapts the rank based on the sparsity $r$.
Contrary to the definition of $\mathcal{M}_{R,r,\bm\omega}$, SALS does not ensure that the weighted sparsity of the core tensor is bounded by $r$ (see the definition of $\mathcal{C}_{Q,r,\bm\omega}$ in~\eqref{eq:model_class_1_core}).
However, since the algorithm chooses $r$ using cross-validation, and since the training set size $n_l$ is chosen according to the sparsity, we expect appropriate error bounds even without modifying the original algorithm.
This expectation is, indeed, validated by our experiments.

Approximation in the model class $\mathcal{M}^2_{\infty,r,\bm\omega}$ was performed using the \emph{semi-sparse alternating least squares} (SSALS) algorithm~\cite[Section~5]{TrunschkeEigelNouy2025}, cf. Appendix~\ref{app:tensors}.
Just like SALS, this algorithm is rank-adaptive by design\footnote{The algorithm uses the rank-adaptation strategy proposed in~\cite{Grasedyck2019SALSA}.} and chooses the sparsity based on cross-validation.
Again, we argue that rank-adaptivity justifies the choice $R=\infty$ and, by the same arguments as above, we expect appropriate error bounds even without modifying the original algorithm.

\subsection{Error computation}
\label{sec:error}

To compute the error, we estimate the relative $L^2$ residual by the following expression
\begin{equation*}
    \mathrm{RMSE}
    := \sqrt{
    \frac{\sum_{m \in [M_{\mathrm{test}}]} \vert h(\bm{y}^{(m)}) - G^{(m)}\vert^2}
    {\sum_{m \in [M_{\mathrm{test}}]} \vert G^{(m)}\vert^2}},
\end{equation*}
where $(\bm{y}^{(m)})_m$ are independently drawn points drawn according to the uniform distribution and
\begin{equation*}
    G^{(m)}= \int_D u^{\mathrm{test}}(x,\bm{y}^{(m)})\, dx.
\end{equation*}
In this case, $u^{\mathrm{test}}(\cdot,\boldsymbol{y}^{(m)})$ denotes the FE solution computed with quadratic Lagrange elements ($\operatorname{deg}=2$) on a uniform grid of width $h = 2^{-14}$. Moreover, the test set size is chosen as $M_{\mathrm{test}} = 1000$. At every point, we report the median RMSE over $20$ independent trials.

Along \eqref{eq:work_scale_Galerkin_dim} and \eqref{eq:Galerkin_dim_scale_spacial_dim}, we compare the RMSE against the estimated work:
$$
    \tau_L^{\mathrm{ML}} = h_0^{-1} \Bigl( N_1 2 + \sum_{l=2}^L N_l (2^{l} + 2^{l-1}) \Bigr) .
$$

\subsection{Results}

\begin{figure}
    \centering
    \includegraphics[width=\linewidth]{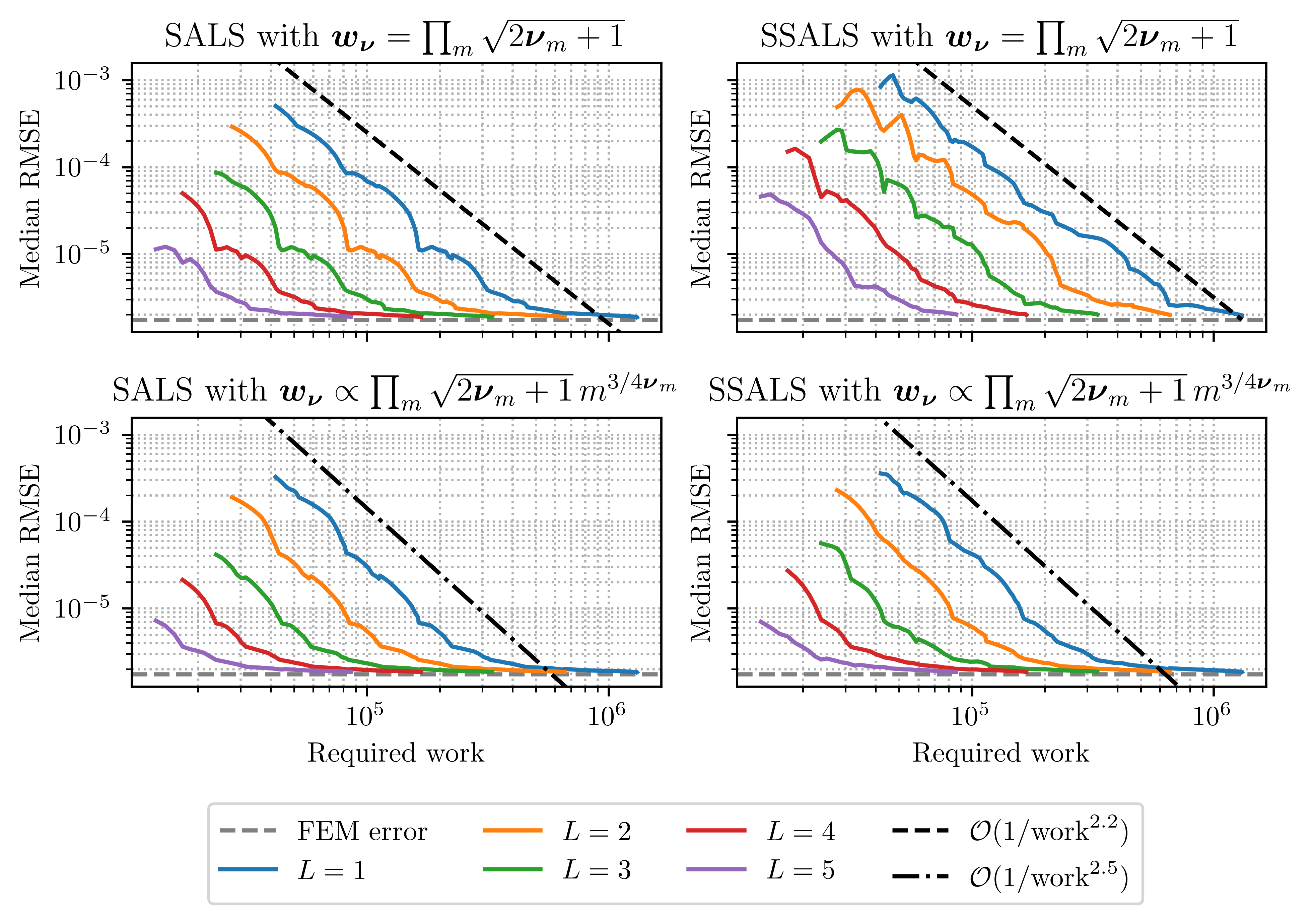}
    \caption{Median RMSE vs. required work for SALS and SSALS with weaker (top) and stronger (bottom) decaying weights. Both methods converge similarly, stronger decay improves accuracy, and increasing the number of levels $L$ enhances the multilevel efficiency. Dashed lines indicate reference rates.}
    \label{fig:salsr1}
\end{figure}

Figure~\ref{fig:salsr1} shows the median RMSE of the multilevel methods for the described experiment in relation to the estimated required work. Each of the two algorithms, SALS and SSALS, was performed with both weight sequences \eqref{eq:weight_sequence_stronger_decay} and \eqref{eq:weight_sequence_weaker_decay}, resulting in a total of 4 plots.

Across all configurations, increasing the number of levels leads to a systematic shift in the error-work curves: with a fixed target accuracy, multilevel estimators require significantly less work than their single-level counterparts. This behavior is fully consistent with the theoretical complexity results from Theorem~\ref{thm:sl_error_and_work} and Theorem~\ref{thm:ml_error_and_work}, where the application of the multilevel approach predicts a reduction in computational costs. In particular, for increasing $L$, the lower bound of FEM accuracy is approximated with significantly estimated computational effort.

The numerical experiments show that SALS and SSALS exhibit essentially the same convergence behavior in terms of RMSE decay. This is in line with the theory, which predicts comparable error bounds and thus similar approximation quality for both variants. We would like to point out that the advantage of SSALS, as described in ~\cite{TrunschkeEigelNouy2025}, lies mainly in its efficient calculation. This is not reflected in the RMSE diagrams, where only the degrees of freedom required by the FEM calculation are shown on the x-axis.

Furthermore, the RMSE decays faster for the stronger decaying weight sequence~\eqref{eq:weight_sequence_stronger_decay} than for~\eqref{eq:weight_sequence_weaker_decay}.
This matches our expectation, since the weight sequence better reflects the decay of the coefficients.

%% file: content/appendix/tensors.tex
\section{Sparse tensor formats and the SALS/SSALS algorithms}
\label{app:tensors}

This appendix recalls the notion of sparse tensor networks used as model class in the manuscript.
We use notation consistent with Section~\ref{sec:model classes} and follow the presentation in~\cite{TrunschkeEigelNouy2025}.

\subsection{Tensor preliminaries}
\label{subsec:tensor_preliminaries}

Let $M\in\mathbb N$ be the order, $d_1,\dots,d_M\in\mathbb N$ be mode sizes, and
\[
    \Lambda := [d_1]\times\cdots\times[d_M],\qquad [d] := \{1,\dots,d\}.
\]

For $\mathcal A\in\mathbb R^{d_1\times\cdots\times d_M}$ we define vectorisation using the
lexicographic index map
\[
    \phi(i_1,\dots,i_M) := 1+\sum_{m=1}^M (i_m-1)\prod_{j=1}^{m-1} d_j,
\]
with empty product set to $1$ and let
\[
    \big(\operatorname{vec}(\mathcal A)\big)_{\phi(i_1,\dots,i_M)} := \mathcal A_{i_1,\dots,i_M}.
\]
For $k\in[M]$, define the row/column index maps
\[
    \phi_{\le k}(i_1,\dots,i_k) := 1+\sum_{m=1}^k (i_m-1)\prod_{j=1}^{m-1} d_j, \qquad \phi_{>k}(i_{k+1},\dots,i_M) := 1+\sum_{m=k+1}^M (i_m-1)\prod_{j=k+1}^{m-1} d_j,
\]
and the $k$-unfolding $\operatorname{unfold}_k(\mathcal A)\in\mathbb R^{(\prod_{j=1}^k d_j)\times(\prod_{j=k+1}^M d_j)}$ by
\[
    \big(\operatorname{unfold}_k(\mathcal A)\big)_{\phi_{\le k}(i_1,\dots,i_k),\,\phi_{>k}(i_{k+1},\dots,i_M)} := \mathcal A_{i_1,\dots,i_M}.
\]
With these conventions, $\operatorname{vec}(\mathcal A)=\operatorname{vec}(\operatorname{unfold}_k(\mathcal A))$ holds.
We identify coefficient vectors in $\mathbb R^{d_1\cdots d_M}$ with tensors $\mathcal A\in\mathbb R^{d_1\times\cdots\times d_M}$ by vectorisation.
Given component tensors $\mathcal A^{(k)}\in\mathbb R^{r_{k-1}\times d_k\times r_k}$, $r_0=r_M=1$, the tensor train (TT) representation is
\[
    \mathcal A = \mathcal A^{(1)}\circ\cdots\circ \mathcal A^{(M)},
\]
where $\circ$ denotes the contraction along the last dimension of the left tensor and the first dimension of the right tensor.
The tuple $(r_1, \ldots, r_{M-1})$ is called TT rank.
A component tensor $\mathcal A^{(k)}$ is called left-orthogonal if $\mathrm{unfold}_{2}(\mathcal A^{(k)})^\top \mathrm{unfold}_{2}(\mathcal A^{(k)})=I$,
and right-orthogonal if $\mathrm{unfold}_{1}(\mathcal A^{(k)})\mathrm{unfold}_{1}(\mathcal A^{(k)})^\top=I$.
A TT representation $\mathcal A=\mathcal A^{(1)}\circ\cdots\circ \mathcal A^{(M)}$ has \emph{core position $k$} if $\mathcal A^{(1)},\dots,\mathcal A^{(k-1)}$ are left-orthogonal and $\mathcal A^{(k+1)},\dots,\mathcal A^{(M)}$ are right-orthogonal with no orthogonality requirement imposed on $\mathcal A^{(k)}$.

\subsection{Weighted sparsity}
To define weighted sparse tensors, let $\omega=(\omega_\nu)_{\nu\in\Lambda}$ with $\omega_\nu>0$ and for $\mathbf c=(c_\nu)_{\nu\in\Lambda}$ define
\[
    \|\mathbf c\|_{\ell^0_\omega} := \sum_{\nu\in\operatorname{supp}(\mathbf c)}\omega_\nu^2, \qquad \|\mathbf c\|_{\ell^p_\omega} := \|\omega\odot \mathbf c\|_{\ell^p}
\]
with componentwise multiplication $\odot$.
We call $\mathbf c$ $(\omega,s)$-sparse if $\|\mathbf c\|_{\ell^0_\omega}\le s$.
For a sparse matrix $X$, let $\operatorname{row}(X)$, $\operatorname{col}(X)$, $\operatorname{data}(X)$ be tuples or rows, columns and values of nonzeros, respectively, and $\operatorname{coo}(r,c,d)$ the sparse matrix with row tuple $r$, column tuple $c$, and values $d$.
Also define $\operatorname{range}(r)=(1,\dots,r)$, $\operatorname{ones}(r)=(1,\dots,1)$, and $\dim(x)$ as vector length.

For a sparse matrix $X\in\mathbb R^{n\times m}$, let $s=\operatorname{unique}(\operatorname{row}(X))$ and $r=\dim(s)$.
Define
\[
    Q:=\operatorname{coo}(s,[r],\mathbf 1)\in\mathbb R^{n\times r}, \qquad C:=Q^\top X\in\mathbb R^{r\times m}.
\]
Then $X=QC$, $Q^\top Q=I_r$, and the columns of $Q$ are standard basis vectors.

\subsection{Sparsely canonicalized TT}
\label{subsec:sctt}

We call a tensor $\mathcal T$ \emph{$s$-sparse} if $\|\operatorname{vec}(\mathcal T)\|_{\ell^0}\le s$.
A TT representation with core position $k$
\begin{equation}\label{eq:TTk}
    \mathcal A = \mathcal U^{(1)}\circ\cdots\circ \mathcal U^{(k-1)} \circ \mathcal C \circ \mathcal V^{(k+1)}\circ\cdots\circ \mathcal V^{(M)}
\end{equation}
is called sparsely canonicalized (SCTT) if
\begin{enumerate}
    \item $\mathcal U^{(j)}\in\{0,1\}^{r_{j-1}\times d_j\times r_j}$ is left-orthogonal and $r_j$-sparse for $1\le j<k$,
    \item $\mathcal V^{(j)}\in\{0,1\}^{r_{j-1}\times d_j\times r_j}$ is right-orthogonal and $r_{j-1}$-sparse for $k<j\le M$,
    \item $\mathcal C\in\mathbb R^{r_{k-1}\times d_k\times r_k}$ is $\min\{r_{k-1},r_k\}$-sparse.
\end{enumerate}

Every $s$-sparse tensor admits such a representation for any core position $k$ with TT ranks bounded by $s$.
Moreover, with
\[
    Q = \mathrm{unfold}_{k}\!\big(\mathcal U^{(1)}\circ\cdots\circ\mathcal U^{(k-1)}\big) \otimes I_{d_k}\otimes \mathrm{unfold}_{1}\!\big(\mathcal V^{(k+1)}\circ\cdots\circ\mathcal V^{(M)}\big)^\top,
\]
we have $\operatorname{vec}(\mathcal A)=Q\operatorname{vec}(\mathcal C)$ and the isometry
\[
    \|\mathcal A\|_{\ell^q_\omega} = \|\mathcal C\|_{\ell^q_{\omega_Q}}, \qquad \omega_Q:=Q^\top\omega, \quad q\in[0,\infty],
\]
using $\|\mathcal T\|_{\ell^q_\omega} := \|\operatorname{vec}(\mathcal T)\|_{\ell^q_\omega}$.
Algorithm~\ref{alg:sparse_canon} depicts the steps to obtain a SCTT.

\begin{algorithm}[h!]
\caption{Sparse canonicalisation (adapted from \cite[Alg.~1]{TrunschkeEigelNouy2025})}
\label{alg:sparse_canon}
\begin{algorithmic}[1]
    \Statex \textbf{Input:} TT $\mathcal A=\mathcal A^{(1)}\circ\cdots\circ\mathcal A^{(M)}$, core position $k$.
    \Statex \textbf{Output:} SCTT with core at $k$.
    
    \State $C^{(0)} \gets I$
    \For{$j=1,\dots,k-1$}
      \State $X^{(j)} \gets \mathrm{unfold}_2(C^{(j-1)}\circ\mathcal A^{(j)})$
      \State sparse QC: $X^{(j)}=Q^{(j)}C^{(j)}$
      \State $\mathrm{unfold}_2(\mathcal U^{(j)}) \gets Q^{(j)}$
    \EndFor
    
    \State $C^{(M+1)} \gets I$
    \For{$j=M,\dots,k+1$}
      \State $X^{(j)} \gets \mathrm{unfold}_1(\mathcal A^{(j)}\circ C^{(j+1)})$
      \State sparse QC: $(X^{(j)})^\top=(Q^{(j)})^\top (C^{(j)})^\top$
      \State $\mathrm{unfold}_1(\mathcal V^{(j)}) \gets Q^{(j)}$
    \EndFor
    
    \State $\mathcal C \gets C^{(k-1)} \circ \mathcal A^{(k)} \circ C^{(k+1)}$
    \State \Return $\mathcal U^{(1)}\circ\cdots\circ\mathcal U^{(k-1)}\circ\mathcal C\circ\mathcal V^{(k+1)}\circ\cdots\circ\mathcal V^{(M)}$
    \end{algorithmic}
\end{algorithm}

\subsection{ALS for sparse tensor model classes}
\label{subsec:sals}

Let $\Gamma$ be the parameter domain equipped with a target measure $\mu$ and let $w:\Gamma\to(0,\infty)$ be a weight function such that the sampling measure is $\rho:=w^{-1}\mu$, cf.~\eqref{eq:min_emp}.
Let $y_i\in\Gamma$, $i=1,\dots,n$, be i.i.d.\ samples drawn from $\rho$ and let $g:\Gamma\to\mathbb R$ denote the target quantity to be approximated.
Let $(B_\nu)_{\nu\in\Lambda}$ be the product basis and define the basis-evaluation tensor $B(y)\in\mathbb R^{d_1\times\cdots\times d_M}$ by $(B(y))_\nu := B_\nu(y)$.
For a coefficient tensor $\mathcal A\in\mathbb R^{d_1\times\cdots\times d_M}$ we use the evaluation
\[
    \langle \mathcal A, B(y)\rangle_{\mathrm{Fro}}=\sum_{\nu\in\Lambda}\mathcal A_\nu B_\nu(y).
\]
Define $F\in\mathbb R^n$ and the design operator $M$ by
\[
    F_i := \sqrt{w(y_i)}g(y_i), \qquad (M\mathcal A)_i := \sqrt{w(y_i)}\langle \mathcal A,B(y_i)\rangle_{\mathrm{Fro}}.
\]
Then empirical least squares over the model class $\mathcal{M}_{R,r,\omega}^1$ reads
\[
    \min_{\mathcal A\in \mathcal M^1_{R,r,\omega}}\|F-M\mathcal A\|_2^2.
\]
Writing the coefficient tensor $\mathcal A$ in SCTT form with core position $k$ as in~\eqref{eq:TTk}, we identify the core with its vectorisation $c := \operatorname{vec}(\mathcal C)\in\mathbb R^{r_{k-1}d_k r_k}$ and use matrix $Q\in\mathbb R^{(d_1\cdots d_M)\times(r_{k-1}d_k r_k)}$ such that $\operatorname{vec}(\mathcal A)=Qc$.
The ALS microstep therefore reads
\[
    \min_{c\in\mathcal C_{Q,r,\omega}} \|F - M(Qc)\|_2^2, \qquad \mathcal C_{Q,r,\omega} := \{c:\|c\|_{\ell^0_{\omega_Q}}\le r\}, \qquad \omega_Q := Q^\top\omega.
\]
A convex relaxation is given by the weighted LASSO
\[
    \min_c \|F-MQc\|_2^2+\lambda\|\omega_Q\odot c\|_1, \quad \lambda>0
\]
With $d=\omega_Q\odot c$, this is equivalent to
\[
    \min_d \|F-MQ(\omega_Q^{-1}\odot d)\|_2^2+\lambda\|d\|_1.
\]
This is a standard $\ell^1$-regularised optimization problem and we select $\lambda$ by cross-validation.
Iterating sequentially over all core positions $k\in[M]$ yields the SALS algorithm as a sparse, rank-adaptive ALS variant depicted in Algorithm~\ref{alg:sals}.

\begin{algorithm}[h!]
\caption{Sparse Alternating Least Squares (SALS)}
\label{alg:sals}
\begin{algorithmic}[1]
\Statex \textbf{Input:} samples $\{(y_i,g(y_i))\}_{i=1}^n$, basis data, weights $\omega$, initial TT $\mathcal A$.
\Statex \textbf{Output:} coefficient tensor $\mathcal A\in\mathcal M^1_{R,r,\omega}$.

\While{not converged}
    \For{$k=1,\dots,M$}
        \State compute sparse canonicalisation with core at $k$ (Alg.~\ref{alg:sparse_canon})
        \State form $Q$ and $\omega_Q=Q^\top\omega$
        \State update core by LASSO (choose $\lambda$ via cross-validation)
    \EndFor
\EndWhile
\State \Return $\mathcal A$
\end{algorithmic}
\end{algorithm}

The semi-sparse model class\footnote{denoted $\widetilde{\mathcal M}_{R,r,\omega}$ in~\cite{TrunschkeEigelNouy2025}} $\mathcal M^2_{R,r,\omega}$ is obtained by replacing the sparse QC with an $\omega$-orthogonal QC
\[
    Q^\top \operatorname{diag}(\omega)\,Q \text{ diagonal}.
\]
Then define
\[
    \omega_Q^2 := \operatorname{diag}(Q^\top \operatorname{diag}(\omega^2)\,Q), \qquad \mathcal C_{Q,r,\omega}^{(2)}:=\{C:\|C\|_{\ell^0_{\omega_Q}}\le r\}.
\]
The resulting SSALS algorithm differs from SALS only by this decomposition step.
In contrast to SALS, intrinsic rank-adaptivity is generally lost.
It can be regained by standard algorithms, such as the one used in~\cite{Grasedyck2019SALSA}.

%% file: content/appendix/proof_summability_2.tex
\section{Proof of Theorem \ref{thm:legendre_coefficient_bounds}}
\label{app:proof:thm:legendre_coefficient_bounds}

Let $v : B_{\mathbb{C}}(0, \bm\rho) \to X$ be holomorphic such that 
$J := \|v\|_{L^\infty(B_{\mathbb{C}}(0, \bm\rho); X)} < \infty$.
Denote by $\bm v$ the power series coefficients of $v$ and by $\hat{\bm v}$ the Legendre series coefficients of $v$.
To bound $\hat{\bm v}$, we first bound $\bm v$ and then use the relation between both coefficient sequences to arrive at the desired bound.
The summability of the power series coefficients $\bm v$ follows from the subsequent generalisation of Cauchy's theorem.

\begin{lemma}[Lemma~2.4 in~\cite{Cohen2010}]
\label{lem:cauchy_banach}
    Let $\bm\rho\in(1,\infty)^M$, $X$ be a Banach space and $v : B_{\mathbb{C}}(0, \bm\rho) \to X$ be holomorphic such that 
    $J := \|v\|_{L^\infty(B_{\mathbb{C}}(0, \bm\rho); X)} < \infty$.
    Then the (multivariate) power series coefficients $\bm{v}\in X^{\mathbb{N}}$ of $v$ satisfy
    \begin{equation}
        \|\boldsymbol{v}_{\bm n}\|_X \le J \bm\rho^{-\bm n} .
    \end{equation}
\end{lemma}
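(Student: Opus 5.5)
We prove the Cauchy-type estimate of Lemma~\ref{lem:cauchy_banach} by the Cauchy integral formula on polydiscs, applied coordinatewise. Fix any radii $\bm r\in(0,\infty)^M$ with $\bm r<\bm\rho$ elementwise. The closed polydisc $\prod_m \overline{B_{\mathbb{C}}(0,\bm r_m)}$ is compactly contained in $B_{\mathbb{C}}(0,\bm\rho)$, and $v$ is holomorphic there. We first note that a holomorphic $X$-valued map on a polydisc admits an expansion $v(\bm z)=\sum_{\bm n\in\mathbb{N}^M}\bm v_{\bm n}\bm z^{\bm n}$ with coefficients $\bm v_{\bm n}\in X$, converging absolutely on the polydisc. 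The coefficients are given by the iterated Cauchy formula
\begin{equation}
    \bm v_{\bm n} = \frac{1}{(2\pi i)^M}\oint_{|z_1|=\bm r_1}\cdots\oint_{|z_M|=\bm r_M} \frac{v(\bm z)}{\bm z^{\bm n+1}}\,\mathrm{d}z_M\cdots\mathrm{d}z_1 ,
\end{equation}
understood as Bochner (or Riemann) integrals in $X$.

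This is standard. One way to justify it in a Banach space is to apply any functional $\phi\in X^*$. Then $\phi\circ v$ is a scalar holomorphic function of $M$ variables, for which the multivariate Cauchy formula and power series expansion are classical (Hartogs/Osgood). By Hahn--Banach, functionals separate points, so the identity transfers from $\phi\circ v$ to $v$ itself.

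Next we estimate the integral by the supremum. On the torus $|z_m|=\bm r_m$ we have $\|v(\bm z)\|_X\le J$ and $|\bm z^{-\bm n-1}|=\bm r^{-\bm n-1}$. The torus has total length $\prod_m 2\pi\bm r_m$, so
\begin{equation}
    \|\bm v_{\bm n}\|_X \le \frac{1}{(2\pi)^M}\, J\, \bm r^{-\bm n-1}\prod_{m=1}^M 2\pi \bm r_m = J\,\bm r^{-\bm n}.
\end{equation}
This bound holds for every $\bm r<\bm\rho$. Letting $\bm r\uparrow\bm\rho$ coordinatewise gives $\|\bm v_{\bm n}\|_X\le J\bm\rho^{-\bm n}$, since the left-hand side does not depend on $\bm r$.

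The only step needing care is the justification of the vector-valued multivariate power series and Cauchy formula. The plan is to reduce it to the scalar case via $X^*$. For the bound, take the supremum over $\|\phi\|\le1$: we have $|\phi(\bm v_{\bm n})|\le \|\phi\circ v\|_\infty\,\bm r^{-\bm n}\le J\bm r^{-\bm n}$, and Hahn--Banach gives $\|\bm v_{\bm n}\|_X=\sup_{\|\phi\|\le1}|\phi(\bm v_{\bm n})|$. This avoids Bochner integration altogether. It does require defining $\bm v_{\bm n}$ as the $X$-valued derivative $\partial^{\bm n}v(0)/\bm n!$, which exists by holomorphy, and using that $\phi(\bm v_{\bm n})$ is the scalar Taylor coefficient of $\phi\circ v$.
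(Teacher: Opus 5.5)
Your proof is correct. It is the standard Cauchy-integral estimate on the distinguished-boundary torus, which is also how Lemma~2.4 of the cited reference is proved; the paper gives no proof of its own and only cites it. Your reduction to scalar functions via $X^*$ and Hahn--Banach, and the limit $\bm r\uparrow\bm\rho$, correctly handle the vector-valued setting and the fact that $v$ is only defined on the open polydisc, and the one unstated step, that the essential supremum $J$ bounds $\|v(\bm z)\|_X$ at every point of the torus, follows from continuity of $v$.
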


The relation between $\hat{\bm v}$ and $\bm v$ is presented in the subsequent two lemmas.
\begin{lemma}
\label{lem:legendre_coefficients}
    Let $\bm\rho\in(1,\infty)^M$, $X$ be a Banach space and $v : B_{\mathbb{C}}(0, \bm\rho) \to X$ and $\bm{v}$ be the (multivariate) power series coefficients of $v$.
    Then the (multivariate) Legendre polynomial coefficients $\hat{\bm v}$ of $v$ are given by
    $$
        \hat{\bm{v}}_{\bm m} = \sum_{\bm{n}\in\mathbb{N}^M} \bm{\alpha}_{\bm{m}, \bm{n}} \bm{v}_{\bm{n}}
    $$
    where $\bm{\alpha}_{\bm{m},\bm{n}}$ is defined by
    \begin{equation}
        \bm{\alpha} := \alpha^{\otimes M}
        \quad\text{with}\quad
        \alpha_{m,k} := \begin{cases}
            \frac{(2m + 1)^{1/2}(m+2n)!}{2^{m+2n}n!(\tfrac{3}{2})_{m+n}} \,, & k = m + 2n\,, \\
            0\,, & \text{otherwise} \,,
        \end{cases}
    \end{equation}
    with the Pochhammer symbol $(a)_0 = 1$ and $(a)_{k+1} = (a)_k(a+k)$.
\end{lemma}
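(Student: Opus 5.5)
The approach is to expand both sides in their respective bases and match coefficients. Since $v$ is holomorphic and bounded on the polydisc $B_{\mathbb{C}}(0,\bm\rho)$ with $\bm\rho>1$, its power series $v(\bm y)=\sum_{\bm n}\bm v_{\bm n}\bm y^{\bm n}$ converges absolutely and uniformly on the closed cube $U=[-1,1]^M$. Indeed, Lemma~\ref{lem:cauchy_banach} gives $\|\bm v_{\bm n}\|_X\le J\bm\rho^{-\bm n}$, and $\sum_{\bm n}\bm\rho^{-\bm n}=\prod_m(1-\bm\rho_m^{-1})^{-1}<\infty$. We can therefore compute the Legendre coefficients by exchanging sum and integral:
$$
\hat{\bm v}_{\bm m}=\int v L_{\bm m}\,\mathrm d\mu=\sum_{\bm n}\bm v_{\bm n}\int \bm y^{\bm n}L_{\bm m}(\bm y)\,\mathrm d\mu(\bm y).
$$
This exchange is justified by dominated convergence, or by Bochner integrability in $X$, since $|L_{\bm m}|\le\ubar{\bm\omega}_{\bm m}$ on $U$.

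The product structure of $\mu=\mu_1^{\otimes M}$ and of $L_{\bm m}$ then gives $\int\bm y^{\bm n}L_{\bm m}\,\mathrm d\mu=\prod_{k}\int_{-1}^1 y^{n_k}L_{m_k}(y)\,\tfrac{\mathrm dy}{2}$. This reduces the whole lemma to the univariate identity $\int y^{k}L_m\,\mathrm d\mu_1=\alpha_{m,k}$. It then remains to evaluate the univariate moments $\int_{-1}^1 y^kP_m(y)\,\mathrm dy/2$, where $P_m$ is the classical Legendre polynomial and $L_m=(2m+1)^{1/2}P_m$.

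The vanishing cases follow from orthogonality and parity. If $k<m$, then $y^k$ lies in the span of $P_0,\dots,P_{k}$, so the moment is zero. If $k-m$ is odd, the integrand is odd, so the moment is again zero. That leaves $k=m+2n$.

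For $k=m+2n$ we use Rodrigues' formula $P_m=\frac{1}{2^m m!}\frac{\mathrm d^m}{\mathrm dy^m}(y^2-1)^m$ and integrate by parts $m$ times; the boundary terms vanish at $\pm1$. This gives
$$
\int_{-1}^1 y^kP_m\,\mathrm dy=\frac{k!}{2^m m!\,(k-m)!}\int_{-1}^1 y^{k-m}(1-y^2)^m\,\mathrm dy .
$$
The remaining integral is a Beta integral equal to $B(n+\tfrac12,m+1)=\Gamma(n+\frac12)\,m!/\Gamma(m+n+\frac32)$. Rewriting $\Gamma(m+n+\frac32)/\Gamma(n+\frac12)$ via $(2n)!/(4^n n!)$ and the Pochhammer symbol $(\tfrac32)_{m+n}=\Gamma(m+n+\frac32)/\Gamma(\frac32)$ should give exactly $\frac{(m+2n)!}{2^{m+2n}n!(3/2)_{m+n}}$ after halving for $\mu_1$. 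Multiplying by $(2m+1)^{1/2}$ yields $\alpha_{m,k}$.

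I expect the main obstacle to be this Gamma/Pochhammer bookkeeping, where factors of $2$ and $\sqrt\pi$ must cancel precisely. As a sanity check I would verify the cases $m=0$ and $n=0$ directly: for $m=n=0$ the formula gives $1$, which is correct. The analytic justification of the sum–integral exchange is routine given Lemma~\ref{lem:cauchy_banach}.
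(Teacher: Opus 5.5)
Your argument is correct, and the Gamma/Pochhammer bookkeeping you were worried about does close. Using $\Gamma(n+\tfrac12)=\sqrt{\pi}\,(2n)!/(4^n n!)$ and $\Gamma(m+n+\tfrac32)=\tfrac{\sqrt{\pi}}{2}(\tfrac32)_{m+n}$, you get $\int_{-1}^1 y^{m+2n}P_m\,\mathrm{d}y = 2\,(m+2n)!/\big(2^{m+2n}n!\,(\tfrac32)_{m+n}\big)$. Halving and multiplying by $(2m+1)^{1/2}$ then gives exactly $\alpha_{m,m+2n}$.

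Your route differs from the paper's. The paper proceeds by induction over $M$ and takes the univariate identity from Iserles without computing it. In the induction step it writes $v$ as a power series in the remaining variables whose coefficients are power series in the first variable. It expands both layers in Legendre polynomials and regroups the nested sums into $\bm\alpha=\alpha^{\otimes M}$.

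You instead start from the integral definition of $\hat{\bm v}_{\bm m}$ and interchange sum and integral once. The product structure of $\mu$, $L_{\bm m}$ and $\bm y^{\bm n}$ then produces the tensor structure in a single step. You also derive the univariate moment yourself via Rodrigues' formula and a Beta integral.

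Your version is self-contained and actually justifies convergence: the Cauchy bound makes the interchange legitimate, whereas the paper regroups the nested series only formally. It also verifies the normalisation constant $(2m+1)^{1/2}$ and the $(\tfrac32)_{m+n}$ factor rather than importing them. The paper's version is shorter, given the reference.

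One small point: the lemma does not assume $J<\infty$ on all of $B_{\mathbb{C}}(0,\bm\rho)$. You only need the Cauchy bound on a closed polydisc of radii $\bm\rho'\in(1,\bm\rho)$, where boundedness holds by continuity, so your argument is unaffected. In any case, wherever the paper uses this lemma, $J<\infty$ holds.
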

\begin{proof}
    We prove this claim by induction over $M$.
    For the univariate case $M=1$, a proof that the $m$\textsuperscript{th} Legendre series coefficient is $\sum_{n\in\mathbb{N}} \alpha_{m,n} \bm{v}_n$ is given in~\cite{Iserles2010}.
    Now, let $M'=M+1$ and let $L_{\bm{m}}$ denote the $\bm{m}$\textsuperscript{th} normalized product Legendre polynomial.
    Using the induction assumption, we can write
    \begin{align}
        v(y, \bm{y})
        &= \sum_{\bm{m}\in\mathbb{N}^{M}}
        \bigg(\sum_{m\in\mathbb{N}\vphantom{\mathbb{N}^M}}
        \bm{v}_{(m,\bm{m})} y^m\bigg) \bm{y}^{\bm{m}} \\
        &= \sum_{\bm{m}\in\mathbb{N}^{M}}
        \tilde{\bm{v}}_{\bm{m}}(y) \bm{y}^{\bm{m}} \\
        &= \sum_{\bm{m}\in\mathbb{N}^{M}}
        \bigg(\sum_{\bm{n}\in\mathbb{N}^M} \bm{\alpha}_{\bm{m},\bm{n}} \tilde{\bm{v}}_{\bm{n}}(y)\bigg) L_{\bm{m}}(\bm{y})
    \end{align}
    with
    $$
        \tilde{\bm{v}}_{\bm{n}}(y)
        = \sum_{m\in\mathbb{N}} \bm{v}_{(m,\bm{n})} y^m
        = \sum_{m\in\mathbb{N}} \bigg(\sum_{n\in\mathbb{N}} \alpha_{m,n} \bm{v}_{(n,\bm{n})}\bigg) L_m(y)
        \,.
    $$
    Combining both equations yields
    \begin{align}
        v(y, \bm{y})
        &= \sum_{\bm{m}\in\mathbb{N}^{M}}
        \Bigg(\sum_{\bm{n}\in\mathbb{N}^M} \bm{\alpha}_{\bm{m},\bm{n}} \Bigg(\sum_{m\in\mathbb{N}} \bigg(\sum_{n\in\mathbb{N}} \alpha_{m,n} \bm{v}_{(n,\bm{n})}\bigg) L_m(y)\Bigg)\Bigg) L_{\bm{m}}(\bm{y}) \\
        &= \sum_{\bm{m}\in\mathbb{N}^{M}} \sum_{m\in\mathbb{N}\vphantom{\mathbb{N}^M}}
        \bigg(
            \sum_{\bm{n}\in\mathbb{N}^M}
            \sum_{n\in\mathbb{N}\vphantom{\mathbb{N}^M}}
            \alpha_{m,n} \bm{\alpha}_{\bm{m},\bm{n}} \bm{v}_{(n,\bm{n})}
        \bigg) L_m(y) L_{\bm{m}}(\bm{y}) \\
        &= \sum_{\bm{m}'\in\mathbb{N}^{M'}}
        \Bigg(
            \sum_{\bm{n}'\in\mathbb{N}^{M'}}
            \bm{\alpha}_{\bm{m}',\bm{n}'} \bm{v}_{\bm{n}'}
        \Bigg) L_{\bm{m}'}(y, \bm{y})
        \,. \qedhere
    \end{align}
\end{proof}

\begin{lemma}
\label{lem:alpha_le_beta}
    Define $\bm{\beta} := \beta^{\otimes M}$ with
    $$
        \beta_{m,k} := \begin{cases}
            (2m+1)^{1/2} \,, & k = m + 2n\,, \\
            0\,, & \text{otherwise} \,.
        \end{cases}
    $$
    Then, $\bm\alpha \le \bm\beta$.
\end{lemma}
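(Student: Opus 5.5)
The plan is to reduce the claim to a univariate entrywise inequality and then prove that by induction. Both $\bm\alpha = \alpha^{\otimes M}$ and $\bm\beta = \beta^{\otimes M}$ are tensor products, so $\bm\alpha_{\bm m,\bm k} = \prod_{j} \alpha_{\bm m_j,\bm k_j}$ and $\bm\beta_{\bm m,\bm k} = \prod_j \beta_{\bm m_j,\bm k_j}$. All factors are nonnegative, and a product of nonnegative numbers is monotone in each factor. Hence it suffices to show $0\le\alpha_{m,k}\le\beta_{m,k}$ for all $m,k\in\mathbb{N}$.

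If $k$ is not of the form $m+2n$ with $n\in\mathbb{N}$, both entries vanish. Otherwise, after cancelling the common factor $(2m+1)^{1/2}$, the claim becomes
$$
(m+2n)! \le 2^{m+2n}\, n!\, (\tfrac32)_{m+n}.
$$

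To make this transparent I will rewrite the right-hand side in terms of double factorials. Since $(\tfrac32)_j = \prod_{i=0}^{j-1}(\tfrac32+i)$, we have $2^j(\tfrac32)_j = 3\cdot5\cdots(2j+1) = (2j+1)!!$. Also $2^n n! = (2n)!!$. The right-hand side therefore equals $(2n)!!\,(2m+2n+1)!!$, and the inequality to prove is
$$
(m+2n)! \le (2n)!!\,(2m+2n+1)!!.
$$

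I then prove this by induction on $(m,n)$, writing $F(m,n)$ for the right-hand side and $E(m,n) := (m+2n)!$ for the left-hand side.
\begin{itemize}
\item At $m=n=0$ both sides equal $1$.
\item Passing from $m$ to $m+1$, the left side gains the factor $m+2n+1$ and the right side the factor $2m+2n+3$, which is larger.
\item Passing from $n$ to $n+1$, the left side gains $(m+2n+1)(m+2n+2)$ and the right side gains $(2n+2)(2m+2n+3)(2m+2n+5)$. We have $2m+2n+3\ge m+2n+1$ and $(2n+2)(2m+2n+5)\ge m+2n+2$, so the right side gains more.
\end{itemize}
Since both sides are positive, the ratio $F/E$ is nondecreasing under each step and starts at $1$. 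Hence $F\ge E$ everywhere, which proves $\alpha_{m,k}\le\beta_{m,k}$ and thus $\bm\alpha\le\bm\beta$.

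The only step requiring care is the Pochhammer/double-factorial rewriting; after it the argument is elementary.
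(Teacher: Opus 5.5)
\textbf{Gap in the $n$-step.} Your reduction to the univariate inequality is correct, and so is the rewriting $2^{m+2n}n!\,(\tfrac32)_{m+n} = (2n)!!\,(2m+2n+1)!!$. The step from $n$ to $n+1$, however, is miscomputed. Since $F(m,n+1) = (2n+2)!!\,(2m+2n+3)!!$ and $(2m+2n+3)!! = (2m+2n+1)!!\,(2m+2n+3)$, the right-hand side gains only the factor $(2n+2)(2m+2n+3)$, not $(2n+2)(2m+2n+3)(2m+2n+5)$. With the correct factor, the required comparison $(2n+2)(2m+2n+3) \ge (m+2n+1)(m+2n+2)$ reduces to $m^2 - m \le 4n+4$, which fails for large $m$ and small $n$. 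For example, at $m=3$, $n=0$ the gains are $18 < 20$, and indeed $F/E$ drops from $F(3,0)/E(3,0) = 105/6$ to $F(3,1)/E(3,1) = 1890/120$. So the claim that $F/E$ is nondecreasing under every step is false, and the induction as written does not establish the inequality.

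\textbf{How to repair it.} The inequality itself is true, and the fix is easy: use an L-shaped path instead of arbitrary steps. At $m=0$ you have $F(0,n) = (2n)!!\,(2n+1)!! = (2n+1)! \ge (2n)! = E(0,n)$. Your $m$-step is correct and then gives $F(m,n)/E(m,n) \ge F(0,n)/E(0,n) \ge 1$. Equivalently, without induction, $F(m,n) = (2n+1)!\prod_{i=1}^m(2n+2i+1) \ge (2n)!\prod_{i=1}^m(2n+i) = E(m,n)$.

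\textbf{Comparison with the paper.} The paper avoids double factorials altogether. It uses $(\tfrac32)_j \ge j!$ to obtain $\alpha_{m,m+2n} \le (2m+1)^{1/2}2^{-(m+2n)}\binom{m+2n}{n}$. It then bounds the binomial coefficient via $\binom{m+2n}{n} \le 2^m\binom{2n}{n}$ and Stirling's formula, which even yields an extra decay factor $\tfrac{e}{\pi}n^{-1/2}$ for $n \ge 1$. For the lemma as stated, the trivial bound $\binom{m+2n}{n} \le 2^{m+2n}$ would already suffice.
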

\begin{proof}
    Observe, that the Pochhammer symbol satisfies $(a)_k > (b)_k$ when $a > b$ and that $(1)_k = k!$.
    Consequently, $(\tfrac{3}{2})_k > (1)_k = k!$, implying
    \begin{equation}
    \label{eq:est_alpha_step_1}
        \alpha_{m, m + 2n}
        = \frac{(2m + 1)^{1/2}(m+2n)!}{2^{m+2n}n!(\tfrac{3}{2})_{m+n}}
        < \frac{(2m + 1)^{1/2}(m+2n)!}{2^{m+2n}n!(m+n)!}
        = (2m + 1)^{1/2} 2^{-(m+2n)} \binom{m+2n}{n}
        \,.
    \end{equation}
    For $n=0$, this immediately implies
    \[
        \alpha_{m, m}
        < (2m + 1)^{1/2}2^{-m} \le \beta_{m,m}.
    \]
    To bound \eqref{eq:est_alpha_step_1} further for $n\ge 1$, we observe that
    $$
        \binom{m+2n}{n}
        =
        \frac{(m+2n)!}{n!(m+n)!}
        = \bigg( \prod_{k=1}^m \frac{2n + k}{n + k} \bigg)\frac{(2n)!}{n!n!}
        \le 2^m \binom{2n}{n}
    $$
    and use Stirling's formula~\cite{Robbins1955}, $n! \in \big(\sqrt{2\pi}, e\big] \cdot n^{n+1/2} e^{-n}$, to calculate
    $$
        \binom{2n}{n} \in \big[\tfrac{\sqrt{2\pi}}{e^2}, \tfrac{e}{\pi} \big] \cdot n^{-1/2} 2^{2n} \,.
    $$
    Inserting the upper bound into the estimate~\eqref{eq:est_alpha_step_1} yields
    \begin{align}
        \alpha_{m, m + 2n}
        \le (2m + 1)^{1/2}  2^{-2n}\binom{2n}{n}        \le \frac{e}{\pi} (2m + 1)^{1/2}n^{-1/2} \le \beta_{m, m+2n} \,.
    \end{align}
\end{proof}

Having established all necessary building blocks, we can now proceed to prove Theorem~\ref{thm:legendre_coefficient_bounds}.
By Lemma~\ref{lem:legendre_coefficients} and the triangle inequality, we can bound
\begin{equation}
\label{eq:legendre_coefficient_bounds}
    \|\hat{\bm v}_{\bm m}\|_{X}
    = \Big\|\sum_{\bm k\in\mathbb{N}^M} \bm\alpha_{\bm m, \bm k}\bm v_{\bm k}\Big\|_{X}
    \le \sum_{\bm k\in\mathbb{N}^M} \bm\alpha_{\bm m, \bm k}\|\bm v_{\bm k}\|_{X}
    \,.
\end{equation}
Using Lemmas~\ref{lem:cauchy_banach} and~\ref{lem:alpha_le_beta}, we further bound
\begin{align}
    \|\hat{\bm v}_{\bm m}\|_{X}
    &\le \sum_{\bm k\in\mathbb{N}^M} \bm\alpha_{\bm m, \bm k}\|\bm v_{\bm k}\|_{X} \\
    &= \sum_{\bm n\in\mathbb{N}^M} \bm\alpha_{\bm m, \bm m + 2\bm n}\|\bm v_{\bm m + 2\bm n}\|_{X} \\
    &\le \sum_{\bm n\in\mathbb{N}^M} \|(2\bm m+1)^{1/2}\|_{\bullet} J{\bm\rho}^{-(\bm m + 2\bm n)} \\
    &= J \|2\bm m+1\|_{\bullet}^{1/2} \bm\rho^{-\bm m}\sum_{\bm n\in\mathbb{N}^M}  {\bm\rho}^{-2\bm n} \\
    &= J \bar{\bm\omega}_{\bm m}^{-1} \sum_{\bm n\in\mathbb{N}^M} \bm{\rho}^{-2\bm n}
    \,.
\end{align}
Finally, note that
$$
    \sum_{\bm n\in\mathbb{N}^M} \bm{\rho}^{-2\bm n}
    = \prod_{k=1}^M\Big(\sum_{n\in\mathbb{N}} \bm{\rho}_k^{-2n}\Big)
    = \prod_{k=1}^M\Big(\frac{\bm \rho_k^2}{\bm \rho_k^2 -1}\Big)
    = \Big\|\frac{\bm \rho^2}{\bm \rho^2 -1}\Big\|_{\bullet} .
$$
This proves the first claim.
The second and third claim follow directly from $\frac{1}{\bar{\bm\omega}} \le \frac{\ubar{\bm\omega}}{\bar{\bm\omega}} \le \frac{\bm\omega^{(2-p)/p}}{\bar{\bm\omega}}$ and the assumption $\frac{\bm\omega^{(2-p)/p}}{\bar{\bm\omega}} \in \ell^p$.

\section{Proof of Theorem \ref{thm:supremum_norm_bound}}
\label{app:proof:thm:supremum_norm_bound}

Let $u$ be the solution of the diffusion equation~\eqref{eq:darcy} with affine coefficients~\eqref{eq:diffusionCoefficientReal} satisfying~\eqref{eq:rho-UEA}.

Following~\cite{CohenDeVore2015}, we begin by proving that $u$ and $u_l$ are uniformly bounded on $B_{\mathbb{C}}(0, \bm\rho)$.

Using~\eqref{eq:rho-UEA}, we derive that 
$$
    a(x, \bm y)
    \ge a_0(x) - \sum_{m=1}^M \rho_m |a_m(x)|
    \ge \inf_{x\in D} a_0(x) - \sum_{m=1}^M \rho_m |a_m(x)|
    = c
$$
for all $(x,\bm y)\in D\times B_{\mathbb{C}}(0, \bm\rho)$.
This implies that
$$
    \mathcal{A}(u, v; \bm{y})
    = \int a(\bm{y}) \nabla u \nabla v \;\mathrm{d}\lambda
    \ge c \int \nabla u \nabla v \;\mathrm{d}\lambda
$$
is uniformly elliptic with constant $c$.
The Lax--Milgram theorem thus implies
$$
    \|u(\bm y)\|_V
    \le \tfrac1c \|f\|_{V^*}
$$
for all $\bm y\in B_{\mathbb{C}}(0, \bm\rho)$ and, consequently,
$
    \|u\|_{L^\infty(B_{\mathbb{C}}(0, \bm\rho); V)} \le \tfrac{1}{c} \|f\|_{V^*}
$ .
Since the restriction of $\mathcal{A}$ to $V_l\times V_l$ remains $c$-elliptic, and since since $f\in V^*\subseteq V_l^*$, Lax--Milgram also implies
$$
    \|u_l(\bm y)\|_{V}
    = \|u_l(\bm y)\|_{V_l}
    \le \tfrac1c \|f\|_{V_l^*}
    \le \tfrac1c \|f\|_{V^*}
    \,.
$$

To prove that $u$ is holomorphic from $B_{\mathbb{C}}(0, \bm\rho)$ to $V$, we introduce for any coefficient field $a$ the operator $B(a) : v\mapsto -\operatorname{div}_{\!x}(a\nabla_{\!x}v)$ mapping from $V$ to $V^*$ and decompose the map $\bm y\mapsto u(\bm y)$ into the chain of holomorphic maps
\begin{equation}
    \bm y
    \mapsto a(\bm y)
    \mapsto B(a(\bm y))
    \mapsto B(a(\bm y))^{-1}
    \mapsto B(a(\bm y))^{-1} f
    = u(\bm y)
    \,.
\end{equation}
The first map is holomorphic by definition and the second and last map are continuous linear maps and thereby also holomorphic.
The third map is the operator inversion which is holomorphic at any invertible $B$.
Since $B(a(\bm y))$ is invertible for every $\bm y\in B_{\mathbb{C}}(0, \bm\rho)$, the composition $\bm y \mapsto u(\bm y)$ is holomorphic.
The same argument applies, mutatis mutandis, to $u_l$.

%% file: content/appendix/proof_rip_transfer.tex
\section{Proof of Theorem~\ref{thm:semi-sparse_sample_complexity}}
\label{app:proof:thm:semi-sparse_sample_complexity}

As before, we note that that $\mathcal{M}^2_{\gamma} \simeq \mathcal{M}^2_{\infty, r(\gamma) ,\bm\omega}$ and, by Proposition~5.13 in~\cite{TrunschkeEigelNouy2025}, it holds that
$$
    \mathcal{A}
    = \{P_{\mathcal{M}^2_{\gamma}}^n g_l\} - \mathcal{M}^2_{\gamma}
    \subseteq \mathcal{M}^2_{\gamma} - \mathcal{M}^2_{\gamma}
    = \mathcal{M}^2_{\infty,2r(\gamma),\bm\omega}
    =: \mathcal{B} \,.
$$
Since $\operatorname{RIP}_{\mathcal{B}}(\delta)$ implies $\operatorname{RIP}_{\mathcal{A}}(\delta)$, it suffices to prove $\operatorname{RIP}_{\mathcal{B}}(\delta)$.
To do this, we use a slight generalisation of Theorem~5.12 in~\cite{TrunschkeEigelNouy2025}.
For this we first recall some definitions from that reference.
First, define
$$
    \operatorname{Cone}(\mathcal{A}) := \big\{ta \ \big|\ a\in \mathcal{A},\, t>0\big\}
    \quad\text{and}\quad
    U(\mathcal{A}) := \Big\{\tfrac{a}{\|a\|_{L^2}} \ \big|\ a\in \mathcal{A}\Big\}
$$
as well as the scale-invariant non-symmetric distance function
\begin{equation}
\label{eq:dsiL}
    d_{\mathrm{si}L^\infty_w}(\mathcal{A}, \mathcal{B})
    := \sup_{a\in \mathcal{A}} d_{L^\infty_w}\big(\operatorname{Cone}(a), U(\mathcal{B})\big)
    \quad\text{with}\quad
    d_{L^\infty_w}(\mathcal{A}, \mathcal{B})
    := \inf_{a\in \mathcal{A}}\inf_{b\in \mathcal{B}} \|a - b\|_{L^\infty_w}~.
\end{equation}

Before stating the theorem, we first prove the subsequent lemma that is used later to simplify the proof.
\begin{lemma}
\label{lem:simplified_dsi}
    Let $r\in(0,1)$. Then
    \begin{align*}
        \forall a\in\operatorname{Cone}(\mathcal{A})\ \exists\, b\in\operatorname{Cone}(\mathcal{B}) : \|a - b\|_{L^\infty_w} \le r \|a\|_{L^2}
        \quad\Rightarrow\quad
        d_{\mathrm{si}L^\infty_w}(\mathcal{A}, \mathcal{B}) \le \tfrac{r}{1-r}
    \end{align*}
\end{lemma}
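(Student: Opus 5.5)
The plan is to fix an arbitrary $a\in\mathcal{A}$ and bound the inner distance $d_{L^\infty_w}\big(\operatorname{Cone}(a), U(\mathcal{B})\big)$ by $\tfrac{r}{1-r}$; taking the supremum over $a$ then gives the claim. Write $d_{\mathrm{si}}$ for $d_{\mathrm{si}L^\infty_w}$. The approach is a rescaling argument: we build one element of $\operatorname{Cone}(a)$ and one element of $U(\mathcal{B})$ whose $L^\infty_w$-distance is at most $\tfrac{r}{1-r}$. If $a=0$ this needs separate handling, either by convention or because $U(\mathcal{A})$ is only meaningful for $a\neq0$. So we assume $\|a\|_{L^2}>0$.

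\emph{Step 1 (apply the hypothesis to a normalised element).} Set $a' := a/\|a\|_{L^2}$. Then $a'\in\operatorname{Cone}(\mathcal{A})$ and $\|a'\|_{L^2}=1$. The hypothesis yields some $b\in\operatorname{Cone}(\mathcal{B})$ with $\|a'-b\|_{L^\infty_w}\le r$.

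\emph{Step 2 (lower bound on $\|b\|_{L^2}$).} We need the comparison $\|v\|_{L^2(\mu)}\le\|v\|_{L^\infty_w}$. It follows from the normalisation $\int w^{-1}\,\mathrm{d}\mu=1$, since
\[
\|v\|_{L^2(\mu)}^2=\int w|v|^2\,w^{-1}\,\mathrm{d}\mu\le \sup w|v|^2 .
\]
This holds when $\|\cdot\|_{L^\infty_w}$ is read as $\sup w^{1/2}|\cdot|$, which is the convention matching $\ubar{\bm\omega}_{\bm m}=\|w^{1/2}L_{\bm m}\|_{L^\infty}$ in the proof of Theorem~\ref{thm:sparse_sample_complexity}. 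I expect this compatibility of norms to be the only real point needing care; everything else is elementary. Granting it, the reverse triangle inequality gives
\[
\|b\|_{L^2}\ge \|a'\|_{L^2}-\|a'-b\|_{L^2}\ge 1-r>0 .
\]
In particular $b\neq0$. Since $b=t b_0$ with $t>0$ and $b_0\in\mathcal{B}$, we get $\hat b:=b/\|b\|_{L^2}=b_0/\|b_0\|_{L^2}\in U(\mathcal{B})$.

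\emph{Step 3 (rescale).} Take $\tilde a := a'/\|b\|_{L^2}$. This is a positive multiple of $a$, so $\tilde a\in\operatorname{Cone}(a)$. By homogeneity of the norm,
\[
\|\tilde a-\hat b\|_{L^\infty_w}=\frac{\|a'-b\|_{L^\infty_w}}{\|b\|_{L^2}}\le\frac{r}{1-r}.
\]
Hence $d_{L^\infty_w}(\operatorname{Cone}(a),U(\mathcal{B}))\le\tfrac{r}{1-r}$ for every $a\in\mathcal{A}$. Taking the supremum over $a\in\mathcal{A}$ gives $d_{\mathrm{si}}(\mathcal{A},\mathcal{B})\le\tfrac{r}{1-r}$.
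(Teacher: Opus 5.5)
Your proof is correct and follows the paper's argument. Your bound $\|b\|_{L^2}\ge 1-r$ for the normalised $a'$ is exactly the paper's triangle-inequality estimate $\|a\|_{L^2}\le\frac{1}{1-r}\|b\|_{L^2}$ (both use $\|\cdot\|_{L^2}\le\|\cdot\|_{L^\infty_w}$), and your Step~3 rescaling spells out the equivalence with ``$\|a-b\|_{L^\infty_w}\le\frac{r}{1-r}\|b\|_{L^2}$'' that the paper asserts without proof. Your caveat about $a=0$ is also well placed: $\operatorname{Cone}(0)=\{0\}$ lies at $L^\infty_w$-distance at least $1$ from $U(\mathcal{B})$, so that case must be excluded, and the paper leaves this implicit as well.
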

\begin{proof}
    Observe that proving $d_{\mathrm{si}L^\infty_w}(\mathcal{A}, \mathcal{B}) \le \tfrac{r}{1-r}$ is equivalent to proving
    \begin{equation}
        \forall a\in\operatorname{Cone}(\mathcal{A})\ \exists\, b\in\operatorname{Cone}(\mathcal{B}) : \|a - b\|_{L^\infty_w} \le \tfrac{r}{1-r} \|b\|_{L^2}
        \,.
    \end{equation}
    Now, let $a\in\operatorname{Cone}(\mathcal{A})$ be arbitrary and note that, by assumption, there exists some $b\in\operatorname{Cone}(\mathcal{B})$ such that
    \begin{equation}
    \label{eq:simple_ab_dsi}
        \|a -b\|_{L^\infty_w} \le r\|a\|_{L^2}
        \,.
    \end{equation}
    Moreover, it holds that
    $$
        \|a\|_{L^2}
        \le \|a - b\|_{L^2} + \|b\|_{L^2}
        \le \|a - b\|_{L_w^\infty} + \|b\|_{L^2}
        \le r\|a\|_{L^2} + \|b\|_{L^2} \,,
    $$
    resulting in the bound $\|a\|_{L^2} \le \frac{1}{1-r}\|b\|_{L^2}$.
    Inserting this bound into~\eqref{eq:simple_ab_dsi} yields $
        \|a -b\|_{L^\infty_w}
        \le r\|a\|_{L^2}
        \le \tfrac{r}{1 - r} \|b\|_{L^2}
    $ and proves the claim.
\end{proof}

We are now ready to prove the adaptation of Theorem~5.12 in~\cite{TrunschkeEigelNouy2025}.
\begin{theorem}
\label{thm:RIP_MRromega}
    Suppose that $\tfrac{\ubar{\bm\omega}}{\tilde{\bm\omega}} \in\ell^\infty$ and $\tfrac{\tilde{\bm\omega}^2}{\bm\omega} \in \ell^1$.
    Moreover, for $c, r > 0$ define 
    $\tilde{r} := (1+c)^2 \|\tfrac{\ubar{\bm\omega}}{\tilde{\bm\omega}}\|_{\ell^\infty}^4 \|\tfrac{\tilde{\bm\omega}^2}{\bm\omega}\|_{\ell^1}^2 r$.
    Then it holds that
    \begin{equation}
        d_{\mathrm{si}L^\infty_w}(\mathcal{M}^2_{R,r,\bm\omega}, B_{\ell^0_{\tilde{\bm\omega}}}(0,\tilde{r})) \le \frac{1}{c} .
    \end{equation}
\end{theorem}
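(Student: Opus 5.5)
We will verify the hypothesis of Lemma~\ref{lem:simplified_dsi}: every element of $\operatorname{Cone}(\mathcal{M}^2_{R,r,\bm\omega})$ is close in $L^\infty_w$ to a weighted sparse vector, with error at most $\epsilon\|a\|_{L^2}$, and then choose $\epsilon$ so that $\epsilon/(1-\epsilon)=1/c$, i.e.\ $\epsilon = 1/(1+c)$.

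Fix $a\in\mathcal{M}^2_{R,r,\bm\omega}$ (the cone is handled by homogeneity, since the sparsity constraint does not change under scaling). By definition there is, for some mode $m$, a $Q\in\mathcal{Q}^2_{m,R,\bm\omega}$ and a core $C$ with $\hat{\bm a}=QC$ and $\|C\|_{\ell^0_{\bm\omega_Q}}\le r$. Since $Q$ is orthogonal, $\|a\|_{L^2}=\|\hat{\bm a}\|_{\ell^2}=\|C\|_{\ell^2}$. Since $Q$ is $\bm\omega^2$-orthogonal,
\[
    \|\hat{\bm a}\|_{\ell^2_{\bm\omega}}^2 = C^\intercal Q^\intercal\operatorname{diag}(\bm\omega^2)QC = \sum_j (\bm\omega_Q)_j^2 C_j^2 \le \|C\|_{\ell^0_{\bm\omega_Q}}\|C\|_{\ell^\infty}^2 \le r\|a\|_{L^2}^2 .
\]
This weighted-$\ell^2$ control of $\hat{\bm a}$ is the key consequence of the model class.

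Next we take $b$ to be the best $\tilde{\bm\omega}$-weighted $\tilde r$-sparse approximation of $\hat{\bm a}$, namely thresholding by the ratio $|\hat{\bm a}_{\bm\nu}|/\tilde{\bm\omega}_{\bm\nu}$ or an analogous greedy selection. We then bound
\[
    \|a-b\|_{L^\infty_w}\le\|\hat{\bm a}-\hat{\bm b}\|_{\ell^1_{\ubar{\bm\omega}}}\le\|\tfrac{\ubar{\bm\omega}}{\tilde{\bm\omega}}\|_{\ell^\infty}\|\hat{\bm a}-\hat{\bm b}\|_{\ell^1_{\tilde{\bm\omega}}}.
\]
To convert the weighted-$\ell^2$ bound into $\ell^1_{\tilde{\bm\omega}}$-compressibility, we apply Cauchy--Schwarz:
\[
    \|\hat{\bm a}\|_{\ell^1_{\tilde{\bm\omega}^2/\bm\omega\cdot\ldots}}
    \quad\text{or more precisely}\quad
    \sum_{\bm\nu}\tilde{\bm\omega}_{\bm\nu}|\hat{\bm a}_{\bm\nu}|
    \le \Big(\sum\tfrac{\tilde{\bm\omega}^2}{\bm\omega}\Big)^{1/2}\Big(\sum\bm\omega|\hat{\bm a}|^2\Big)^{1/2}.
\]
Together with a Stechkin-type weighted sparse approximation estimate (Corollary~2.8/Lemma~2.11 in~\cite{TrunschkeEigelNouy2025}, of the form $\sigma_{\tilde r}(\bm x)_{\ell^1_{\tilde{\bm\omega}}}\le \tilde r^{-1/2}\|\bm x\|_{\ell^2_{\ldots}}$-type bounds), this yields
\[
    \|\hat{\bm a}-\hat{\bm b}\|_{\ell^1_{\tilde{\bm\omega}}}\lesssim \tilde r^{-1/2}\,\|\tfrac{\tilde{\bm\omega}^2}{\bm\omega}\|_{\ell^1}\,\|\tfrac{\ubar{\bm\omega}}{\tilde{\bm\omega}}\|_{\ell^\infty}\,\sqrt r\,\|a\|_{L^2}.
\]
Squaring the resulting condition shows that $\tilde r=(1+c)^2\|\tfrac{\ubar{\bm\omega}}{\tilde{\bm\omega}}\|^4_{\ell^\infty}\|\tfrac{\tilde{\bm\omega}^2}{\bm\omega}\|^2_{\ell^1}r$ makes the total error at most $\tfrac{1}{1+c}\|a\|_{L^2}$. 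Lemma~\ref{lem:simplified_dsi} with $\epsilon=1/(1+c)$ then gives $d_{\mathrm{si}L^\infty_w}\le 1/c$.

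The main obstacle is the middle step: matching the exponents so that exactly the fourth power of $\|\ubar{\bm\omega}/\tilde{\bm\omega}\|_{\ell^\infty}$ and the square of $\|\tilde{\bm\omega}^2/\bm\omega\|_{\ell^1}$ appear. This likely requires that the weighted Stechkin bound itself contributes one extra factor of $\|\ubar{\bm\omega}/\tilde{\bm\omega}\|_{\ell^\infty}$, through the lower bound $\tilde{\bm\omega}\gtrsim\ubar{\bm\omega}\ge 1$ used to control the weights of the discarded entries. If this bookkeeping fails, the fallback is to follow the proof of Theorem~5.12 in~\cite{TrunschkeEigelNouy2025} line by line, replacing their specific weights by $\tilde{\bm\omega}$.
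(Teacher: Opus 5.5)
\textbf{There is a genuine gap in the middle step.} Your outer structure matches the paper: Lemma~\ref{lem:simplified_dsi} with $1/(1+c)$, homogeneity of the cone, and the bound $\|\hat{\bm a}\|_{\ell^2_{\bm\omega}}\le\sqrt r\,\|a\|_{L^2}$ from orthogonality, $\bm\omega^2$-orthogonality and sparsity of $C$. But the step that turns this into a decay rate $\tilde r^{-1/2}$ is asserted, not proved, and it carries the whole content of the theorem. Your Cauchy--Schwarz inequality has three problems:
\begin{itemize}
\item It only bounds $\|\hat{\bm a}\|_{\ell^1_{\tilde{\bm\omega}}}$, and boundedness in $\ell^1_{\tilde{\bm\omega}}$ gives no decay of the best weighted $\tilde r$-term error. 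With unit weights, a vector with $2\tilde r$ equal entries has $\ell^1$-norm $1$ and best $\tilde r$-term error $\tfrac12$.
\item It produces $\sum\bm\omega|\hat{\bm a}|^2$, which is not the quantity $\sum\bm\omega^2|\hat{\bm a}|^2$ you actually controlled.
\item It carries only $\|\tilde{\bm\omega}^2/\bm\omega\|_{\ell^1}^{1/2}$ rather than the first power.
\end{itemize}
The estimate you gesture at, $\sigma_{\tilde r}(\hat{\bm a})_{\ell^1_{\tilde{\bm\omega}}}\le\tilde r^{-1/2}\|\tfrac{\tilde{\bm\omega}^2}{\bm\omega}\|_{\ell^1}\|\hat{\bm a}\|_{\ell^2_{\bm\omega}}$, is true. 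However, it is not a Stechkin bound by itself: Stechkin into $\ell^1$ needs a source quasi-norm $\ell^p$ with $p<1$, and producing one is exactly what is missing. This is why the exponents do not come out for you.

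\textbf{The missing idea.} Use H\"older with exponents $1$ and $2$, landing in $\ell^{2/3}$ (Lemma~2.11 of~\cite{TrunschkeEigelNouy2025}):
\[
\|\hat{\bm a}\|_{\ell^{2/3}_{\tilde{\bm\omega}^2}}
=\Big\|\tfrac{\tilde{\bm\omega}^2}{\bm\omega}\cdot\bm\omega\hat{\bm a}\Big\|_{\ell^{2/3}}
\le\Big\|\tfrac{\tilde{\bm\omega}^2}{\bm\omega}\Big\|_{\ell^1}\|\hat{\bm a}\|_{\ell^2_{\bm\omega}}
\le\Big\|\tfrac{\tilde{\bm\omega}^2}{\bm\omega}\Big\|_{\ell^1}\sqrt{r}\,\|a\|_{L^2}.
\]
Then apply the weighted Stechkin bound (Corollary~2.8 there) with $p=2/3$. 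Its weight is $\tilde{\bm\omega}^{2/p-1}=\tilde{\bm\omega}^2$ and its rate is $\tilde r^{1-1/p}=\tilde r^{-1/2}$. This yields exactly the first power of $\|\tilde{\bm\omega}^2/\bm\omega\|_{\ell^1}$.

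\textbf{The factor $c_1=\|\ubar{\bm\omega}/\tilde{\bm\omega}\|_{\ell^\infty}$.} The paper applies Stechkin directly with the weight $c_1\tilde{\bm\omega}\ge\ubar{\bm\omega}$, so that $\|\hat{\bm a}\|_{\ell^{2/3}_{(c_1\tilde{\bm\omega})^2}}=c_1^2\|\hat{\bm a}\|_{\ell^{2/3}_{\tilde{\bm\omega}^2}}$. That is where $c_1^2$, hence $c_1^4$ in $\tilde r$, comes from. It does not come from a lower bound on the weights of discarded entries.

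Your variant pulls out $c_1$ first and then applies Stechkin with $\tilde{\bm\omega}$. This gives a single factor $c_1$, i.e.\ an error of $\tfrac{1}{(1+c)c_1}\|a\|_{L^2}$ for the stated $\tilde r$. Enlarging the sparsity budget only helps, so this already suffices whenever $c_1\ge1$. That holds e.g.\ for $\tilde{\bm\omega}=\ubar{\bm\omega}$, the case used afterwards. The paper's inclusion $B_{\ell^0_{c_1\tilde{\bm\omega}}}(0,\tilde r)\subseteq B_{\ell^0_{\tilde{\bm\omega}}}(0,\tilde r)$ tacitly needs $c_1\ge1$ as well. So exact matching of exponents was never the real obstacle; the $\ell^{2/3}$ step was.
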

\begin{proof}
    Let $A\in \mathcal{M}^2_{R,r,{\bm\omega}}$ and $c_1 := \|\frac{\ubar{\bm\omega}}{\tilde{\bm\omega}}\|_{\ell^\infty}$, which is finite because $\frac{\ubar{\bm\omega}}{\tilde{\bm\omega}}\in\ell^1$.
    Since $\|L_{\bm\nu}\|_{L^\infty_w} = \ubar{\bm\omega}_{\bm\nu} \le c_1\tilde{\bm\omega}_{\bm\nu}$, Corollary~2.8 in~\cite{TrunschkeEigelNouy2025} ensures that for every $\tilde{r}>0$ there exists $B \in B_{\ell^0_{\tilde{\bm\omega}}}(0, \tilde{r})$ such that
    \begin{equation}
    \label{eq:bound_L2L_inf_ell05omega}
        \|A - B\|_{L_w^\infty}
        \le \|A - B\|_{\ell^1_{c_1\tilde{\bm\omega}}}
        \le \tilde{r}^{-1/2} \|A\|_{\ell^{2/3}_{(c_1\tilde{\bm\omega})^2}}
        = c_1^2 \tilde{r}^{-1/2} \|A\|_{\ell^{2/3}_{\tilde{\bm\omega}^2}}.
    \end{equation}
    The final $\ell^{2/3}_{\tilde{\bm\omega}^2}$-norm can be bounded by Lemma~2.11 in~\cite{TrunschkeEigelNouy2025},
    yielding
    \begin{equation}
    \label{eq:bound_ell05omega_ell2omega}
        \|A\|_{\ell^{2/3}_{\tilde{\bm\omega}^2}}
        \le \|\tfrac{\tilde{\bm\omega}^2}{\bm\omega}\|_{\ell^1} \|A\|_{\ell^2_{\bm\omega}} \,.
    \end{equation}
    Since $A\in\mathcal{M}^2_{R,r,\bm\omega}$ can be written as $A=QC$ for some $\bm\omega^2$-orthogonal $Q$ and sparse $C$, it follows that
    \begin{equation}
    \label{eq:bound_ell2omega_ell2beta}
        \|{A}\|_{\ell^2_{\bm\omega}}^2
        = (QC)^\intercal \operatorname{diag}({\bm\omega}^2) (QC)
        = C^\intercal \operatorname{diag}(\beta_Q^2) C
        = \|{C}\|_{\ell^2_{\beta_Q}}^2
    \end{equation}
    for some $\bm\beta_Q$, depending on $\bm\omega$ and $Q$.
    Now let $\mathcal{S} := \operatorname{supp}(C)$ and bound
    \begin{equation}
    \label{eq:bound_ell2beta_ell2}
        \|C\|_{\ell^2_{\bm\beta_Q}}^2
        = \sum_{\bm{m}\in \mathcal{S}} \bm \beta_{Q,\bm m}^2 C_{\bm m}^2
        \le \sum_{\bm m\in \mathcal S} \Big(\sum_{\bm n\in \mathcal S}\bm \beta_{Q,\bm n}^2\Big) C_{\bm m}^2
        = \|C\|_{\ell^0_{\bm\beta_Q}} \|{C}\|_{\ell^2}^2
        \le r \|C\|_{\ell^2}^2
        = r \|A\|_{\ell^2}^2 .
    \end{equation}
    Combining equations~\eqref{eq:bound_L2L_inf_ell05omega},~\eqref{eq:bound_ell05omega_ell2omega},~\eqref{eq:bound_ell2omega_ell2beta} and~\eqref{eq:bound_ell2beta_ell2} results in the bound
    \begin{equation}
        \|A - B\|_{L_w^\infty}
        \le \|\tfrac{\ubar{\bm\omega}}{\tilde{\bm\omega}}\|_{\ell^\infty}^2 \|\tfrac{\tilde{\bm\omega}^2}{\bm\omega}\|_{\ell^1} \sqrt{\tfrac{r}{\tilde{r}}} \|{A}\|_{L^2} \,.
    \end{equation}
    Now, recall that $\tilde{r} = (1 + c)^2 \|\tfrac{\ubar{\bm\omega}}{\tilde{\bm\omega}}\|_{\ell^\infty}^4 \|\tfrac{\tilde{\bm\omega}^2}{\bm\omega}\|_{\ell^1}^2 r$ and, therefore,
    \begin{equation}
        \|A - B\|_{L_w^\infty}
        \le \tfrac{1}{1+c} \|{A}\|_{L^2} \,.
    \end{equation}
    Lemma~\ref{lem:simplified_dsi} now implies $d_{\mathrm{si}L^\infty_w}(A, B) \le \frac{1}{c}$.
\end{proof}

Applying Theorem~\ref{thm:RIP_MRromega} with $\tilde{\bm\omega} = \ubar{\bm\omega}$ implies
\begin{equation}
    d_{\mathrm{si}L^\infty_w}(\mathcal{M}^2_{R,2r(\gamma),\bm\omega}, B_{\ell^0_{\ubar{\bm\omega}}}(0,\tilde{r})) \le \frac{1}{c}
\end{equation}
with $\tilde{r} := (1+c)^2 \|\tfrac{\ubar{\bm\omega}^2}{\bm\omega}\|_{\ell^1}^2 2r(\gamma)$.
Now, suppose that $\delta\le\frac{1}{2}$ and $c\ge\frac{15}{\delta}$, then 
Theorem~5.9 in~\cite{TrunschkeEigelNouy2025} ensures $\operatorname{RIP}_{B_{\ell^0_{\bm\omega}}(0, \tilde{r})}(\tfrac{\delta}2)$ implies $\operatorname{RIP}_{\mathcal{M}^2_{R,2r(\gamma),\bm\omega}}(\delta)$.

It, thus, remains to ensure $\operatorname{RIP}_{B_{\ell^0_{\bm\omega}}(0, \tilde{r})}(\tfrac{\delta}2)$.
By Theorem~1.5 in~\cite{TrunschkeEigelNouy2025}, there exists a constant $C>0$ such that this can be achieved with high probability for
\begin{equation}
    n = 4 C \delta^{-2} \tilde{r}\max\{M\log^3(\tilde{r})\log(N), \log(p^{-1})\} \,.
\end{equation}
Plugging in $c=\frac{15}\delta$ and removing the logarithmic factors (as in the proof of Theorem~\ref{thm:sparse_sample_complexity}) yields
\begin{align}
    n
    &\ge \tilde{C} C \delta^{-4} r(\gamma)^{1+\varepsilon} \log(p^{-1}) \\
    &= \tilde{C} C \delta^{-4} \gamma^{-1/s+\varepsilon} \log(p^{-1}) \,.
\end{align}